\newtheorem{theorem}{Theorem}[section]
\newtheorem{corollary}[theorem]{Corollary}
\newtheorem{lemma}[theorem]{Lemma}
\newtheorem{question}[theorem]{Question}
\newtheorem{example}[theorem]{Example}
\newtheorem{definition}[theorem]{Definition}
\newtheorem*{intdef}{Definition}
\newtheorem{remark}[theorem]{Remark}
\newtheorem{prop}[theorem]{Proposition}
\newtheorem{claim}[theorem]{Claim}
\newtheorem{obs}[theorem]{Observation}
\newenvironment{Example}{\begin{example}\rm}{\end{example}}
\newenvironment{Remark}{\begin{remark}\rm}{\end{remark}}
\newenvironment{Claim}{\begin{claim}\rm}{\end{claim}}
\newenvironment{Question}{\begin{question}\rm}{\end{question}}
\def\et{\;\mbox{and}\;}
\def\HU{
\widetilde{\Upsilon}}
\def\HI{
\widetilde{I}}
\def\wr{{\rm{wr}}}
\def\l{{{\ell}}}
\def\fdtc{{\omega}}
\def\for{\quad\mbox{for }}
\def\et{\quad\mbox{and}\quad}
\def\epsilon{\varepsilon}
\def\R{\mathbb{R}}
\def\Z{\mathbb{Z}}
\begin{document}
\title{Braids with as many full twists as strands realize the braid index
}
\author{Peter Feller}

\address{ETH Z\"urich, R\"amistrasse 101, 8092 Z\"urich, Switzerland}
\email{peter.feller@math.ch}
\author{Diana Hubbard}

\address{Brooklyn College, CUNY, 2900 Bedford Avenue, Brooklyn, NY  11210 USA}
\email{diana.hubbard@brooklyn.cuny.edu}

\subjclass[2010]{57M25,  57M27}
\keywords{Fractional Dehn twist coefficient, braid groups, braid index, Dehornoy order, Upsilon (knot Floer homology), concordance group homomorphism, homogenization}
\begin{abstract}
  We characterize the fractional Dehn twist coefficient of a braid in terms of a slope of the homogenization of the Upsilon function, where Upsilon is the function-valued concordance homomorphism defined by Ozsv\'ath, Stipsicz, and Szab\'o. We use this characterization to prove that $n$-braids with fractional Dehn twist coefficient larger than $n-1$ realize the braid index of their closure. As a consequence, we are able to prove a conjecture of Malyutin and Netsvetaev stating that $n$-times twisted braids realize the braid index of their closure. We provide examples that address the optimality of our results. The paper ends with an appendix about the homogenization of knot concordance homomorphisms.
\end{abstract}
\maketitle
\section{Introduction}

A \emph{braid} or \emph{$n$-braid} is an element of \emph{Artin's braid group on $n$-strands} $B_n$~\cite{Artin_TheorieDerZoepfe}, which can be presented as
\[B_n=\left\langle a_1,\cdots,a_{n-1}\;\middle|\;a_ia_j=a_ja_i\text{ for }|i-j|\geq 2,a_ia_{i+1}a_i=a_{i+1}a_ia_{i+1}\right\rangle.\]

Our main result about braids connects two notions from different perspectives on braid theory. On one hand, viewing braids as mapping classes of the $n$ punctured closed disk $D_{n}$ leads to the notion of \emph{the fractional Dehn twist coefficient} $\fdtc(\beta)$ of the conjugacy class of a braid $\beta$: a rational number which roughly speaking measures how much the mapping class twists along the boundary of $D_{n}$ as one performs an isotopy to its canonical representative.  On the other hand, \emph{links}---oriented and closed smooth $1$-submanifolds of $S^3$ considered up to ambient isotopy---can be studied as the closures of braids; see Figure \ref{fig:braid_closure}.
Indeed, by Alexander's theorem~\cite{Alexander_23_ALemmaOnSystemsOfKnottedCurves} all links arise as closures of braids, making the following well-defined: 
the \emph{braid index} of a link $L$ is the smallest positive integer $n$ such that there exists an $n$-braid with closure $L$.

 It is a long-standing open problem to find an algorithm that determines the braid index of a given link;
 compare to Birman and Brendle's survey~\cite[Open Problem 1]{Birman_Brendle_braidssurvey}.
 With the exception of certain families (for instance, see~\cite{Murasugi_BraidIndexAlternatingLinks} and~\cite{Franks_Williams_87_BraidsAndTheJonesPolynomial}), little is known about the braid indices of knots and links. One of the most famous results about the braid index is the Morton-Franks-Williams (MFW) inequality, which gives bounds on the braid index in terms of the Jones/HOMFLY-PT polynomial (\cite{Franks_Williams_87_BraidsAndTheJonesPolynomial}, \cite{Morton_SeifertCircles}, \cite{Morton_PolynomialsFromBraids}). In \cite{Birman_Brendle_braidssurvey}, Birman and Brendle observed that this was, to their knowledge, the only ``general result" about the braid index. While the MFW inequality is sharp on all but five of the prime knots with up to ten crossings (\cite{Jones_MFW}), it is not sharp for infinitely many knots and links, and furthermore, Kawamuro showed that the defect between the MFW bound and the braid index can be arbitrarily large (see \cite{Kawamuro_braidindex}, \cite{Kawamuro_KR_MFW}, \cite{Elrifai_thesis}).

We relate the braid index and the fractional Dehn twist coefficient as follows.
\begin{theorem}\label{thmintro:FDTCofNonminimalBraidIsBounded}
For any integer $n \geq 2$, every $n$-braid $\beta$ with $|\fdtc(\beta)|>n-1$ realizes the braid index of its closure. In other words, for every $n$-braid $\beta$ such that there exists an $(n-1)$-braid with isotopic link closure, we have $|\fdtc(\beta)| \leq n-1$.
\end{theorem}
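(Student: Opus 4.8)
The plan is to prove the second, equivalent formulation by contraposition, via the characterization of $\fdtc$ in terms of the homogenized Upsilon function described above. So suppose $\beta\in B_n$ and some $\gamma\in B_{n-1}$ has closure isotopic to $\widehat\beta$; we must deduce $|\fdtc(\beta)|\le n-1$. Replacing the pair $(\beta,\gamma)$ by $(\beta^{-1},\gamma^{-1})$ negates $\fdtc(\beta)$ and replaces the closures $\widehat\beta,\widehat\gamma$ by their mirror images, so the hypothesis is preserved; we may therefore assume $\fdtc(\beta)\ge 0$ and must prove $\fdtc(\beta)\le n-1$. The case $n=2$ is immediate, as the only $1$-braid closure is the unknot, whose $2$-braid representatives are $a_1^{\pm 1}$ with $|\fdtc|=\tfrac12\le 1$; so assume $n\ge 3$. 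By the characterization theorem, $\binom{n}{2}\fdtc(\beta)$ is recovered as the relevant ``slope'' of the homogenized function $\widetilde\Upsilon_\beta(t)=\lim_{k\to\infty}\tfrac1k\Upsilon_{\widehat{\beta^{k}}}(t)$ near $t=0$: after subtracting the linear part $-\widetilde\tau(\beta)\,t$, this slope measures how fast $\widetilde\Upsilon_\beta$ bends away from its initial tangent, normalized so that the full twist, for which $\widetilde\Upsilon_{\Delta_n^{2}}(t)=-\binom{n}{2}\bigl(t-\tfrac{t^{2}}{2}\bigr)$ near $t=0$, has $\fdtc=1$. Thus the theorem is equivalent to the assertion that, for $\beta$ whose closure destabilizes, this bending rate of $\widetilde\Upsilon_\beta$ is at most $(n-1)$ times that of $\widetilde\Upsilon_{\Delta_n^{2}}$.

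The analytic input is a slice--Bennequin-type inequality for $\Upsilon$ that records the number of strands. For $\delta\in B_m$ with closure $K$, the nonnegative function $t\mapsto\Upsilon_K(t)+\tau(K)\,t$ --- the vertical gap between $\Upsilon_K$ and its tangent line at $0$ --- should be bounded above, on an initial interval whose length is controlled from below by the length of the braid word $\delta$, by a quadratic in $t$ whose leading coefficient is a fixed multiple of $\binom{m}{2}$ times the exponent sum of $\delta$. I would prove this by realizing $K$ as the boundary of the Bennequin surface of $\delta$, obtained from $m$ disks by attaching positive and negative bands according to the letters of $\delta$, tracking the effect of each band attachment on $\Upsilon$, comparing against the known values of $\Upsilon$ on connected sums of positive torus links, and using the invariance of $\Upsilon$ under positive and negative Markov stabilization.

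The crux, and the step I expect to be genuinely hard, is propagating this estimate through the homogenization. The obstruction is that $\widetilde\Upsilon_\beta$ depends only on the links $\widehat{\beta^{k}}$, whereas the hypothesis destabilizes $\beta$ itself but not its powers --- destabilization does not commute with taking powers. Worse, a generic cobordism from $\widehat{\beta^{k}}$ to an $(n-1)$-braid closure has genus linear in $k$, so it contributes an error that is linear in $t$ after dividing by $k$, and a linear error dwarfs the quadratic quantity that computes $\fdtc(\beta)$. To circumvent this I would use that the characterization is an equality of homogeneous quasimorphisms on $B_n$: expressing $\beta$, up to conjugacy, as an iterated Markov (de)stabilization of $\gamma$ reduces the computation of the slope of $\widetilde\Upsilon_\beta$ --- modulo a bounded defect, which is invisible to the second-order behaviour at $t=0$ --- to the same computation for braids of the form $\gamma'a_{n-1}^{\pm 1}$ with $\gamma'\in B_{n-1}$, and for these one compares $\widehat{(\gamma'a_{n-1}^{\pm 1})^{k}}$ with $\widehat{(\gamma')^{k}}$ in a manner fine enough to retain the second-order data, then feeds the result into the Bennequin estimate above on $n-1$ strands. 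The bulk of the work is then the explicit (but routine) $\Upsilon$-estimates of the second paragraph; the delicate point --- where the argument must really do something --- is organizing the passage through the powers $\beta^{k}$ so that it preserves not merely the linear ($\tau$-)information but the finer second-order data that encodes $\fdtc(\beta)$.
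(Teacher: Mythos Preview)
Your proposal rests on a misconception about the shape of $\HU_\beta$. You write $\HU_{\Delta_n^{2}}(t)=-\binom{n}{2}(t-\tfrac{t^2}{2})$ near $t=0$ and speak of ``second-order data'' and ``quadratic'' bounds; but $\HU_\beta$ is piecewise \emph{linear}, equal to $-\tfrac{\wr(\beta)}{2}t$ on all of $[0,\tfrac{2}{n}]$ and then changing slope by exactly $n\,\fdtc(\beta)$ at $t=\tfrac{2}{n}$ (Theorem~\ref{thmintro:FDTCviaUpsilon}). There is no quadratic germ at $t=0$ to protect, and the quantity $\fdtc(\beta)$ is read off at the fixed scale $t=\tfrac{2}{n-1}$, not in a limit $t\to 0$. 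Consequently the linear-in-$t$ cobordism error you correctly identify (Proposition~\ref{prop:Upsilon_alpha-Upsilon_beta}) does \emph{not} swamp the $\fdtc$ information: evaluating at $t=\tfrac{2}{n-1}$ one obtains
\[
\bigl|\,2\fdtc(\beta)-\wr(\beta)+\wr(\alpha)\,\bigr|\le n+m-2,
\]
which is precisely the paper's route. Your proposed workaround via Markov destabilizations and quasimorphism defects is therefore aimed at a non-problem, and in any case ``bounded defect, invisible to second-order behaviour'' is false here since there is no second-order behaviour.

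What your proposal is genuinely missing is the second half of the argument: the inequality above only gives $2|\fdtc(\beta)|\le|\wr(\beta)-\wr(\alpha)|+n+m-2$, and $|\wr(\beta)-\wr(\alpha)|$ is a priori unbounded. The paper closes this gap by invoking the generalized Jones conjecture (Dynnikov--Prasolov), which yields $|\wr(\beta)-\wr(\alpha)|\le n-m$ when $\alpha$ realizes the braid index $m$; combining gives $|\fdtc(\beta)|\le n-1$. Nothing in your outline produces a writhe bound of this type, and the slice--Bennequin/Markov manipulations you sketch do not recover it. This is the essential missing idea.
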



\begin{figure}[h]
\centering
\includegraphics[scale=0.3]{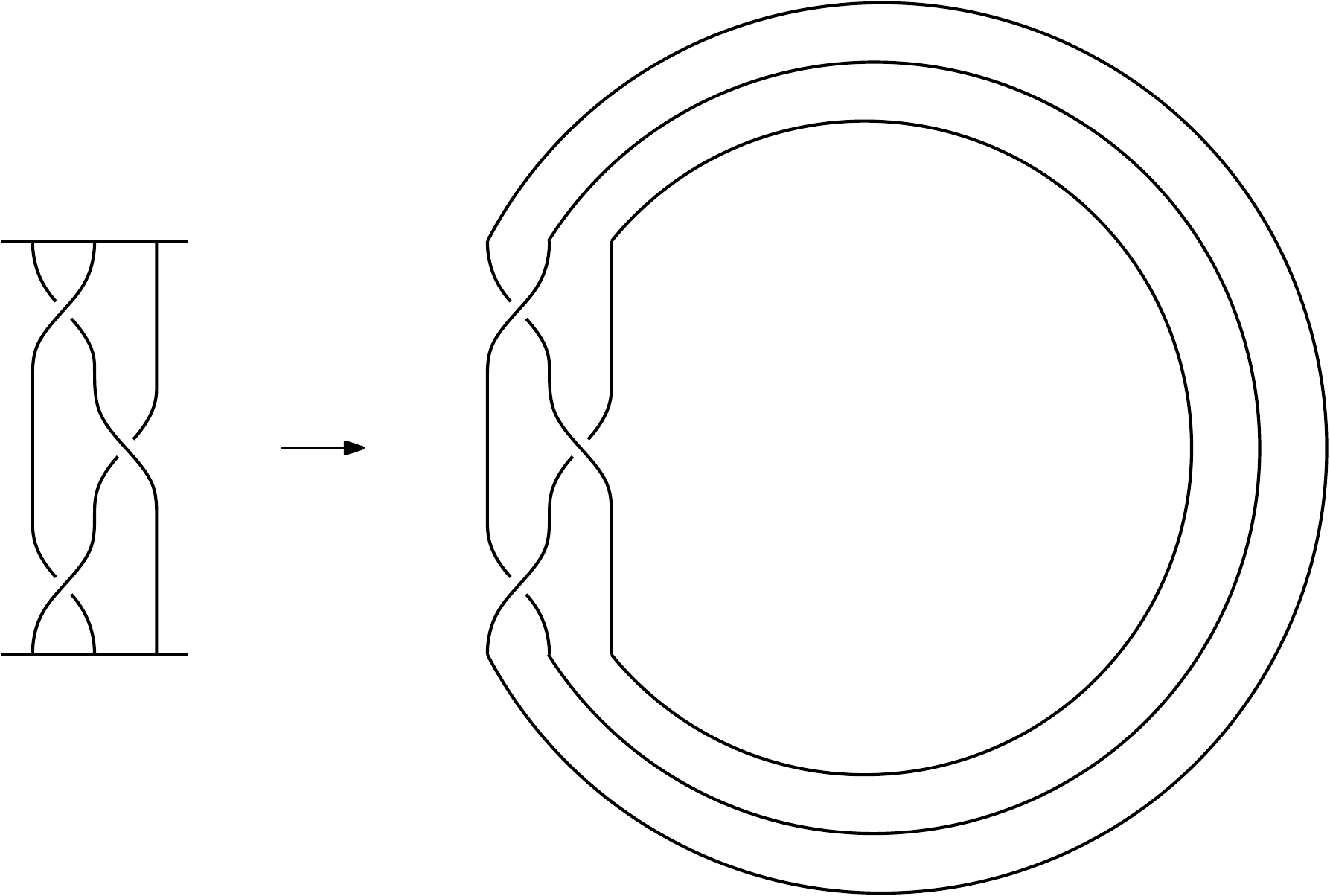}
\caption{On the left, the geometric braid corresponding to the $3$-braid $a_{1}a_{2}^{-1}a_{1}$. {In general, $a_{i}$ in the $n$-stranded braid group $B_{n}$ corresponds to a positive half-twist between the $i$'th and $i+1$'st strands.}
On the right, its closure $\widehat{a_{1}a_{2}^{-1}a_{1}}$. Braids are oriented upwards and their closures are oriented accordingly.}
\label{fig:braid_closure}
\end{figure}

We show in Section \ref{sec:Examples} (see Example~\ref{Ex:elrifai_examples}) that this result determines the braid index for infinitely many examples where the MFW inequality fails to be sharp. Furthermore, in Section~\ref{sec:Examples} we discuss examples, originally discovered by Malyutin and Netsvetaev in~\cite{MalyutinNetsvetaev_03}, of $n$-braids with fractional Dehn twist coefficient $n-2$ that do not realize the braid indices of their closures.  These examples show that Theorem~\ref{thmintro:FDTCofNonminimalBraidIsBounded} is very close to optimal, with the possibility that the bound could be improved to $n-2$. The main tool for the proof of Theorem~\ref{thmintro:FDTCofNonminimalBraidIsBounded} is Theorem~\ref{thmintro:FDTCviaUpsilon} given below---a characterization of $\fdtc(\beta)$ in terms of Ozsv\'ath, Stipsicz, and Szab\'o's $\Upsilon$-invariant for knots, which is defined using the Heegaard Floer knot complex $CFK^\infty(K)$. {(\emph{Knots} are links consisting of a single connected component.)} Surprisingly, our proof of Theorem~\ref{thmintro:FDTCofNonminimalBraidIsBounded}, which is a purely 3-dimensional result, uses the concordance properties of the $\Upsilon$-invariant (in other words, its 4-dimensional aspects; see Section~\ref{sec:PropofhomoofUpsilon} and Appendix~\ref{app:homogenizationofconcordancehomos}). Before we discuss Theorem~\ref{thmintro:FDTCviaUpsilon}, we briefly recall a description of the fractional Dehn twist coefficient from Malyutin~\cite{Malyutin_Twistnumber} via a Thurston-type order on the braid group due to Dehornoy~\cite{Dehornoy_WhyAreBraidsOrderable} and we use Theorem~\ref{thmintro:FDTCofNonminimalBraidIsBounded} to resolve a conjecture by Malyutin and Netsvetaev.

A braid $\beta$ is said to be \emph{Dehornoy positive}, denoted by $\beta\succ 1$, if it can be written as a braid word that, for some integer $1\leq i<n$, contains a braid generator $a_i$ but no $a_i^{-1}$ or any generators $a_j^{\pm 1}$ for $j<i$. Dehornoy showed that this gives a well-defined, left invariant, total order $\succ$ on $B_n$ by setting $\alpha\prec\beta$ to mean $\alpha^{-1}\beta\succ1$. The \emph{Dehornoy floor} $\lfloor\beta\rfloor$ is the unique integer $m$ such that $(\Delta^2)^{m+1}\succ\beta\succeq(\Delta^2)^{m}$ where $\Delta^{2} = (a_{1}\cdots a_{n-1})^{n}$ is the full twist on $n$ strands. The fractional Dehn twist coefficient equals the homogenization of the Dehornoy floor, i.e.~for any $\beta$, ~$\fdtc(\beta)=\lim_{k\to\infty}\frac{\lfloor\beta^k\rfloor}{k}$; see~\cite{Malyutin_Twistnumber}. Using this description of the fractional Dehn twist coefficient, Theorem~\ref{thmintro:FDTCofNonminimalBraidIsBounded} allows us to conclude the following: 

\begin{corollary}[Compare to Conjecture 7.4 in~\cite{MalyutinNetsvetaev_03}]\label{corintro:ConjMalyutinNetsetaev}
Fix an integer $n\geq2$. If an $n$-braid $\beta$ satisfies $\Delta^{2n}\preceq\beta$ or $\beta\preceq\Delta^{-2n}$, then the closure of $\beta$ does not arise as the closure of a braid on $n-1$ or fewer strands.
\end{corollary}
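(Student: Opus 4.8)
The plan is to deduce the corollary directly from Theorem~\ref{thmintro:FDTCofNonminimalBraidIsBounded} using Malyutin's formula $\fdtc(\beta)=\lim_{k\to\infty}\tfrac{\lfloor\beta^k\rfloor}{k}$ recalled above; concretely, I would show that each of the two hypotheses forces $|\fdtc(\beta)|\geq n$, which in particular exceeds $n-1$.

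Suppose first that $\Delta^{2n}\preceq\beta$. Left-multiplying by the central element $\Delta^{-2n}$ and using left invariance of $\succ$, this says $\delta:=\Delta^{-2n}\beta\succeq1$. Since $\Delta^{2}$ is central in $B_n$, we get $\beta^k=\Delta^{2nk}\delta^k$ for every $k\geq1$; and since $\delta\succeq1$ implies $\delta^k\succeq1$ (again by left invariance and transitivity), left-multiplying back by $\Delta^{2nk}$ gives $\beta^k\succeq\Delta^{2nk}=(\Delta^2)^{nk}$. Because $\Delta^2\succ1$, this forces $\lfloor\beta^k\rfloor\geq nk$ directly from the definition of the Dehornoy floor, so passing to the limit yields $\fdtc(\beta)\geq n$. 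The case $\beta\preceq\Delta^{-2n}$ is entirely symmetric (or: apply the previous case to $\beta^{-1}$, using that $\beta\preceq\Delta^{-2n}$ is equivalent to $\Delta^{2n}\preceq\beta^{-1}$ and that $\fdtc(\beta^{-1})=-\fdtc(\beta)$), and gives $\fdtc(\beta)\leq -n$.

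In either case $|\fdtc(\beta)|\geq n>n-1$, so Theorem~\ref{thmintro:FDTCofNonminimalBraidIsBounded} applies and shows that $\beta$ realizes the braid index of its closure. That is exactly the assertion that the closure of $\beta$ does not arise as the closure of any braid on $n-1$ or fewer strands.

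As for the main obstacle: for this corollary essentially all of the content lives in Theorem~\ref{thmintro:FDTCofNonminimalBraidIsBounded}, and the only point requiring genuine care is the passage from $\lfloor\beta\rfloor$ to $\lfloor\beta^k\rfloor$. Here one must resist invoking a two-sided invariance of $\succ$ that the Dehornoy order does not possess, and instead exploit that $\Delta^2$ is central, which is what lets one split off $\Delta^{2nk}$ from $\beta^k$; one also has to keep the sign conventions in the definition of the Dehornoy floor and in $\fdtc(\beta^{-1})=-\fdtc(\beta)$ consistent.
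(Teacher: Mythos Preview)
Your proof is correct and follows essentially the same approach as the paper's own argument: both show that the hypothesis $\Delta^{2n}\preceq\beta$ (resp.\ $\beta\preceq\Delta^{-2n}$) propagates to $\Delta^{2nk}\preceq\beta^k$ for all $k$, whence $\fdtc(\beta)\geq n$ (resp.\ $\leq -n$), and then invoke Theorem~\ref{thmintro:FDTCofNonminimalBraidIsBounded}. Your version is in fact more explicit than the paper's about why the inequality iterates, carefully using centrality of $\Delta^2$ and left invariance rather than an illicit two-sided invariance.
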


In~\cite{MalyutinNetsvetaev_03}, Malyutin and Netsvetaev used work of Birman and Menasco in~\cite{BirmanMenasco_StabilizationI} (specifically, their Markov theorem without stabilization) to show that for every $n\geq 2$ there exist a constant $r_n$ such that, if an $n$-braid $\beta$ satisfies $\Delta^{2r_{n}}\preceq\beta$ or $\beta\preceq\Delta^{-2r_{n}}$, then the closure of $\beta$ does not arise as the closure of a braid on $n-1$ or fewer strands. Their proof is based on a counting argument which does not yield the constant $r_n$ explicitly. However, they showed that $r_{n} \geq n-1$ (\cite{MalyutinNetsvetaev_03}, Example 7.5), crucially observing that $r_{n}$ must increase with the number of strands,
and they conjectured that $r_{n} = n$ works (see~\cite[Conjecture 7.4]{MalyutinNetsvetaev_03}. Our approach allows to prove that conjecture; see Corollary~\ref{corintro:ConjMalyutinNetsetaev}. 
We describe the characterization of $\fdtc(\beta)$ in terms of the homogenization of $\Upsilon$ next.

In~\cite{OSS_2014}, Ozsv\'ath, Stipsicz, and Szab\'o associate to a knot $K$ 
a piecewise linear function $\Upsilon_K\colon[0,1]\to\R$.
Its homogenization is an invariant of 
braids defined by
\[\HU_\beta(t)=\lim_{k\to\infty}\frac{\Upsilon_{\widehat{\beta^k\varepsilon_{\beta^k}}}}{k},\]
where $\varepsilon_{\beta^k}$ is a shortest possible (as a word in the Artin generators and their inverses) $n$-braid such that the closure of $\beta^k\varepsilon_{\beta^k}$ is a knot rather than a link.
The homogenization of many (concordance) knot invariants (including Ozsv\'ath and Szab\'o's $\tau$ invariant that is generalized by $\Upsilon$) are completely determined by the writhe, where the \emph{writhe} $\wr(\beta)$ of a braid $\beta$ equals the exponent sum of its braid word; see~\cite{Brandenbursky_11}. One instance of this is $\HU$ of $n$-braids for $t\leq \frac{2}{n}$: for an $n$-braid $\beta$, we have $\HU_{\beta}(t)=-t\frac{\wr(\beta)}{2}$ for $t\leq\frac{2}{n}$; see~\cite
{FellerKrcatovich_16_OnCobBraidIndexAndUpsilon}. Our main result on $\HU$ is that  $\widetilde{\Upsilon}_\beta(t)$ is also linear on $[\frac{2}{n},\frac{2}{n-1}]$ and the change of slope at $\frac{2}{n}$ equals $\fdtc(\beta)n$:
\begin{theorem}\label{thmintro:FDTCviaUpsilon}
  Fix an integer $n\geq 2$. For all $n$-braids $\beta$, we have
  \[\widetilde{\Upsilon}_\beta(t)=-t\frac{\wr(\beta)}{2}+\fdtc(\beta) n(t-\frac{2}{n}) \for \frac{2}{n}\leq t\leq \min\{\frac{2}{n-1},1\}
.\]
\end{theorem}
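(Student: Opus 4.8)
The plan is to prove Theorem~\ref{thmintro:FDTCviaUpsilon} by combining three ingredients: (i) the already-cited fact that $\HU_\beta(t) = -t\,\wr(\beta)/2$ for $t \le \frac 2n$, together with the linearity near $t=0$; (ii) a relationship between $\fdtc$ and the Dehornoy floor via Malyutin's homogenization formula $\fdtc(\beta) = \lim_k \lfloor \beta^k\rfloor / k$; and (iii) the behavior of $\Upsilon_K$ under the operations of adding/removing a full twist $\Delta^2$ and under (de)stabilization, which governs $\HU$ on the interval $[\frac 2n, \frac 2{n-1}]$. The structure is: first establish that $\HU_\beta$ is affine on $[\frac 2n, \min\{\frac 2{n-1},1\}]$, then identify the slope there, and finally match the value at the endpoint $t=\frac 2n$ (which is forced to be $-\wr(\beta)/n$ by ingredient (i) and continuity of $\HU$, a piecewise-linear limit of piecewise-linear functions).

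First I would reduce the slope computation to a bound in both directions. Since $\Delta^2$ is central with $\wr(\Delta^2) = n(n-1)$, and $\widehat{\beta\Delta^2}$ differs from $\widehat\beta$ by a full twist, the key estimate is a two-sided control of $\Upsilon_{\widehat{\beta^k\varepsilon}}(t)$ on $[\frac 2n,\frac 2{n-1}]$ in terms of $\wr(\beta^k)$ and the Dehornoy floor $\lfloor \beta^k\rfloor$. Concretely, if $(\Delta^2)^{m} \preceq \beta^k \prec (\Delta^2)^{m+1}$ with $m = \lfloor \beta^k\rfloor$, then after multiplying by a suitable bounded-length braid one can write $\beta^k\varepsilon$ as $(\Delta^2)^m$ times a braid that is Dehornoy-positive-or-trivial and Dehornoy-less-than $\Delta^2$; braids in this "fundamental domain" have closures whose $\Upsilon$ on $[\frac 2n, \frac 2{n-1}]$ is controlled uniformly (using positivity/quasipositivity-type genus bounds and the cobraid-index results of \cite{FellerKrcatovich_16_OnCobBraidIndexAndUpsilon}), contributing an $O(1)$ error after dividing by $k$. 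The $(\Delta^2)^m$ factor contributes a term that, on $[\frac 2n,\frac 2{n-1}]$, scales linearly in $m$; computing the $\Upsilon$-slope contribution of a single full twist on $n$ strands (via its torus-link cabling structure, or by a direct computation of $\Upsilon$ of the closure after stabilizing to a knot) yields the coefficient $n(t-\frac 2n)$ after normalizing by the writhe term. Dividing by $k$ and sending $k\to\infty$, the $O(1)$ errors vanish, $m/k \to \fdtc(\beta)$, and $\wr(\beta^k)/k = \wr(\beta)$, giving the claimed formula on the half-open interval, and continuity extends it to the closed interval.

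I expect the main obstacle to be establishing the uniform $O(1)$ bound on $\Upsilon_{\widehat{\gamma}}(t)$ for $t \in [\frac 2n,\frac 2{n-1}]$ as $\gamma$ ranges over the Dehornoy fundamental domain $\{1 \preceq \gamma \prec \Delta^2\}$ (and its mirror), independently of the word length of $\gamma$. The subtlety is that this set is infinite and contains braids of unbounded complexity, so the bound cannot come from a crude Seifert-genus or slice-genus estimate alone; it requires exploiting that such a $\gamma$, multiplied by a bounded-length correction $\varepsilon$, becomes a braid whose closure destabilizes to an $(n-1)$-braid closure or admits a quasipositive/quasinegative factorization controlling $\Upsilon$ on the relevant slice of $[0,1]$ — this is precisely where the $4$-dimensional, concordance-theoretic input (the slice-genus-type bounds from $\Upsilon$, and the homogenization machinery of Appendix~\ref{app:homogenizationofconcordancehomos}) enters, rather than purely $3$-dimensional braid combinatorics. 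A secondary technical point is checking that the shortest-word correction $\varepsilon_{\beta^k}$ needed to make the closure a knot has length bounded independently of $k$ (or at least $o(k)$), so that it does not disturb the homogenized limit; this should follow from the fact that one can always make the closure connected by conjugating so that a bounded initial segment of $\beta^k$ already mixes all strand-components, or by a permutation-theoretic argument on the underlying symmetric-group element.
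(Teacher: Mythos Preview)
Your overall architecture matches the paper's exactly: sandwich $\beta^k$ between $\Delta^{2m}$ and $\Delta^{2m+2}$ with $m=\lfloor\beta^k\rfloor$, use the centrality of $\Delta^2$ together with the known value $\HU_{\Delta^2}=\Upsilon_{T_{n,n+1}}$, control the remainder, divide by $k$, and let $m/k\to\fdtc(\beta)$.

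The one genuine mis-step is your formulation of the ``main obstacle.'' You ask for a uniform $O(1)$ bound on $\Upsilon_{\widehat{\gamma}}(t)$ (equivalently $\HU_\gamma(t)$) as $\gamma$ ranges over the fundamental domain $\{1\preceq\gamma\prec\Delta^2\}$, independent of word length. No such bound exists: for $\gamma=\Delta^{-2m}\beta^k$ one has $\wr(\gamma)=k\wr(\beta)-mn(n-1)$, which typically grows linearly in $k$, and $\HU_\gamma(t)$ grows with it. What \emph{is} uniformly bounded is the writhe-normalized quantity $\HU_\gamma(t)+t\,\wr(\gamma)/2$: the paper shows (Lemma~\ref{lemma:beta>1} and its mirror, assembled into Proposition~\ref{prop:HUofbeta}) that for $t\in[\frac{2}{n},\frac{2}{n-1}]$ this lies in $[0,\,n(t-\frac{2}{n})]$. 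Once you carry the writhe through the decomposition $\beta^k=\Delta^{2m}\gamma$, this normalized bound is exactly what the homogenization needs, and your argument goes through.

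The proof of that bound is also far simpler than you anticipate---no destabilization or quasipositive factorization is required. If $\gamma\succeq1$, write $\gamma$ in a Dehornoy-positive form $\gamma=\alpha_0 a_1\alpha_1 a_1\cdots a_1\alpha_l$ with each $\alpha_i$ supported on strands $2,\ldots,n$, and delete the $l$ copies of $a_1$ to get $\gamma'=\alpha_0\cdots\alpha_l$. Then $|\HU_\gamma(t)-\HU_{\gamma'}(t)|\le t\,l/2$ by Lemma~\ref{lem:PropofHU}\ref{item:HUofgenerator}, while $\HU_{\gamma'}(t)=-t\,\wr(\gamma')/2$ for all $t\le\frac{2}{n-1}$ by Lemma~\ref{lemma:HUislinearforsmallt}, since $\gamma'$ is a disjoint union of an $(n-1)$-braid and a single strand. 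Combining gives $\HU_\gamma(t)\ge -t\,\wr(\gamma)/2$, which is Lemma~\ref{lemma:beta>1}. Your secondary worry about $\varepsilon_{\beta^k}$ is a non-issue: its length is at most $n-1$ for every $k$.
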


It is known that $\omega(\beta)$ can only take on values in a certain set of rational numbers (see Proposition~\ref{fdtcproperties}) which allows us to conclude in Corollary~\ref{cor:upsilonrational} that the slope change at $\frac{2}{n}$ can also only take on certain rational values.  A priori, it does not seem clear 
that one should even expect $\HU$ to be linear on $[\frac{2}{n},\frac{2}{n-1}]$. In contrast to this, Gambaudo and Ghys studied the homogenization of the $\omega$-signature function, which, properly normalized, is also determined by the writhe on $[0,\frac{2}{n}]$: it agrees with $\HU$ on $[0,\frac{2}{n}]$; see~\cite{GambaudoGhys_BraidsSignatures} and~\cite
{FellerKrcatovich_16_OnCobBraidIndexAndUpsilon}. However, in general the homogenization of the $\omega$-signature function behaves non-linearly on $[\frac{2}{n},\frac{2}{n-1}]$; for example, it is not linear on $[\frac{2}{3},1]$ for the $3$-braid $a_1^3a_2^7$ (see ~Example~4.6 in~\cite{FellerKrcatovich_16_OnCobBraidIndexAndUpsilon}).

Finally, we end with a short discussion of a specific corollary of the MFW inequality in order to compare and contrast it with Theorem \ref{thmintro:FDTCofNonminimalBraidIsBounded}. This corollary is the fact that any positive braid that can be written as the product of a single positive full twist with another positive braid realizes the braid index~\cite
{Franks_Williams_87_BraidsAndTheJonesPolynomial}. (Recall that a braid is \emph{positive} if it has a word representative that only uses positive powers of the Artin generators.) The reader may ask why, in contrast, our results require the number of twists to increase with the number of strands. The reason is that the positivity condition in the MFW corollary is more restrictive than our conditions. For example, our results can apply to non-positive braids. For instance, an $n$-braid of the form $\Delta^{2n}(a_{2}a_{3}\cdots a_{n-1})^{-k}$ has fractional Dehn twist coefficient $n$ and is neither positive nor even quasipositive for large enough $k$. Furthermore, we point to the existence of a family of positive braids with an increasing number of twists (more precisely, with increasing fractional Dehn twist coefficient) that do not realize the braid index; see Example~\ref{optimalityexample}.

\subsection*{Organization}
We describe the structure of the paper and the proofs of our main results.
In Section~\ref{sec:FDTC}, we provide background on the fractional Dehn twist coefficient. In Section~\ref{sec:PropofhomoofUpsilon}, we recall properties of $\Upsilon$, and provide properties of its homogenization.
In Section~\ref{sec:FDTCviaUpsilon}, we prove Theorem~\ref{thmintro:FDTCviaUpsilon}. We use the description of the fractional Dehn twist coefficient in terms of the Dehornoy floor, a lower bound on $\HU$ for Dehornoy positive braids in terms of the writhe (Lemma~\ref{lemma:beta>1}), 
the characterization of $\Upsilon$ for torus knots $T_{n,n+1}$, and linearity of $\HU$ on $[0,\frac{2}{n}]$ and $[0,\frac{2}{n-1}]$ for $n$-braids and $(n-1)$-braids, respectively. 
In Section~\ref{sec:UpsilonandBraidIndex}, we derive Theorem~\ref{thmintro:FDTCofNonminimalBraidIsBounded} 
from Theorem~\ref{thmintro:FDTCviaUpsilon} by employing the fact that the difference between $\HU$ of an $n$-braid and an $m$-braid with the same closure is bounded by $t\frac{n+m-2}{2}$ (Proposition~\ref{prop:Upsilon_alpha-Upsilon_beta}) and the generalized Jones conjecture as proven by Dynnikov and Prasolov~\cite{DynnikovPrasolov_13} (compare also with~\cite{LaFountainMenasco_14}).
In Section~\ref{sec:Examples}, we provide examples that show that Theorem~\ref{thmintro:FDTCofNonminimalBraidIsBounded} is essentially optimal.
In Section~\ref{sec:questions}, we collect some questions.
Finally, in Appendix~\ref{app:homogenizationofconcordancehomos} we prove properties of homogenizations of concordance homomorphisms that specialize to properties of the homogenization of $\Upsilon$ provided in Section~\ref{sec:PropofhomoofUpsilon}.

\subsection*{Acknowledgments}
The first author thanks Benjamin Hennion and Kristian Moi for helpful discussions and the second author thanks David Krcatovich for a helpful conversation about Upsilon. The first author thanks the Max Planck Institute for Mathematics in Bonn for their support and hospitality. The second author was supported in part by NSF RTG grant 1045119.
\section{Background on the fractional Dehn twist coefficient}\label{sec:FDTC}

As we will see in more detail towards the end of this section, the fractional Dehn twist coefficient can be defined in many different ways, and in fact can be defined not only for braids but also for mapping classes of general surfaces with boundary. It first appeared in the literature in the work of Gabai and Oertel on essential laminations of $3$-manifolds (see~\cite{Gabai_EssentialLaminations3Manifolds}), though there it is referred to in very different language.  The definition that is most useful to us comes from Dehornoy's order on the braid group, and is due to Malyutin in~\cite{Malyutin_Twistnumber}. The advantage of this point of view is that Dehornoy's order provides a concrete characterization of the positivity of a braid in terms of its word in the Artin generators.

Recall from the introduction that a braid $\beta$ is said to be \emph{Dehornoy positive}, denoted by $\beta\succ 1$, if it can be written as a braid word that contains a braid generator $a_i$ for some integer $1\leq i<n$ but no $a_i^{-1}$ and no $a_j^{\pm 1}$ for $j<i$. We then say that $\alpha\prec\beta$ if $\alpha^{-1}\beta\succ1$. In \cite{Dehornoy_94_Braidgroups} (see also \cite{Dehornoy_WhyAreBraidsOrderable}), Dehornoy proved that this is a well-defined left-invariant total order $\succ$ on $B_n$.

While Dehornoy was the first to establish the existence of a left-invariant total order on the braid group, many more orders are now known coming from geometric considerations.  In~\cite{Fenn_Orderingbraidgroups}, the five authors give a method for constructing orders on $B_{n}$ that involves comparing the action of braids on diagrams of curves drawn on the punctured disk $D_{n}$. In~\cite{Short_OrderingsmappingclassgroupsafterThurston}, Short and Wiest describe and classify more orderings on $B_{n}$ (originally due to Thurston). These orderings come from equipping $D_{n}$ with a hyperbolic structure, and considering the action of braids on the boundary of its universal cover, viewed in $\mathbb{H}^{2}$, together with its limit points on the circle at infinity. Both of these perspectives can give rise to orders not only on $B_{n}$ but more generally on mapping class groups of surfaces with boundary (see \cite{Rourke_OrderAutomaticMappingClassGroups} and \cite{Short_OrderingsmappingclassgroupsafterThurston}).

While it is possible to prove that Dehornoy's ordering is in fact total and left-invariant using entirely combinatorial and algebraic tools, the order has natural geometric content. Indeed, it can be recovered as an order coming from both the curve diagram perspective and the Thurston perspective, and many of the properties of Dehornoy's order are more or less immediate from the geometric point of view. While the geometric perspective is in some sense more natural, working with Dehornoy's order directly makes many of our computations more straightforward.

Recall that $\Delta^{2} \in B_{n}$ is the element $(a_{1} \cdots a_{n-1})^{n}$; it corresponds to a full twist around the boundary of $D_{n}$ and commutes with every other element in $B_{n}$. Dehornoy's order on $B_{n}$ now allows us to define the following: the \emph{Dehornoy floor} $\lfloor\beta\rfloor$ is the unique integer $m$ such that $(\Delta^2)^{m+1}\succ\beta\succeq(\Delta^2)^{m}$. The intuition here is that the Dehornoy floor gives a measurement of how many positive full twists can be extracted from a braid so that the remainder is still non-negative in the order. We now can define the fractional Dehn twist coefficient of a braid $\beta \in B_{n}$, denoted $\omega(\beta)$, as follows (\cite{Malyutin_Twistnumber}):
$$\fdtc(\beta)=\lim_{k\to\infty}\frac{\lfloor\beta^k\rfloor}{k}$$

One can prove that this is well-defined in a self-contained way using the fact that Dehornoy's floor is a quasimorphism.

We collect in the following proposition some properties of the fractional Dehn twist coefficient that are relevant for this paper:

\begin{prop}\label{fdtcproperties} [\cite{Malyutin_Twistnumber}, \cite{ItoKawamuro_OpenBookFoliations}]  For any $\alpha, \beta \in B_{n}$, we have:
\begin{enumerate}[a)]
\item (Quasimorphism) $|\omega(\alpha\beta) - \omega(\alpha) - \omega(\beta) | \leq 1$
\item (Homogeneity) $\omega(\alpha^n) = n\omega(\alpha)$
\item (Behavior under full twists) $\omega(\Delta^{2}\alpha) = \omega(\alpha) + 1$
\item (Conjugacy invariant) $\omega(\alpha) = \omega(\beta\alpha\beta^{-1})$
\item $\omega(\alpha)$ is rational, and in fact $\{\omega(\alpha) | \alpha \in B_{n}\} = \{\frac{p}{q} | \, p \in \mathbb{Z}, q \in \mathbb{Z}, 1 \leq q \leq n\}$.

\end{enumerate}

\end{prop}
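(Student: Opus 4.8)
The plan is to extract (a)--(d) directly from the Dehornoy-floor formula $\omega(\beta)=\lim_{k\to\infty}\lfloor\beta^k\rfloor/k$, using only that $\Delta^2$ is central in $B_n$ and that $\succ$ is left-invariant; the denominator statement in (e) will require the Nielsen--Thurston picture and the cited work. The first step is to record how the floor behaves under the relevant operations. Multiplying the defining inequalities $(\Delta^2)^{\lfloor\beta\rfloor+1}\succ\beta\succeq(\Delta^2)^{\lfloor\beta\rfloor}$ on the left by $\Delta^2$ and using left-invariance gives $\lfloor\Delta^2\beta\rfloor=\lfloor\beta\rfloor+1$. Chaining the defining inequalities for $\alpha$ and for $\beta$---multiply $\beta\succeq(\Delta^2)^{\lfloor\beta\rfloor}$ on the left by $\alpha$, move the central power past $\alpha$, then multiply $\alpha\succeq(\Delta^2)^{\lfloor\alpha\rfloor}$ on the left by $(\Delta^2)^{\lfloor\beta\rfloor}$, and symmetrically for the upper bounds---gives
\[\lfloor\alpha\rfloor+\lfloor\beta\rfloor\ \leq\ \lfloor\alpha\beta\rfloor\ \leq\ \lfloor\alpha\rfloor+\lfloor\beta\rfloor+1,\]
so the floor is superadditive and subadditive up to $1$. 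In particular $k\mapsto\lfloor\beta^k\rfloor$ is superadditive and bounded above by $k\lfloor\beta\rfloor+(k-1)$, so by Fekete's lemma the limit defining $\omega(\beta)$ exists, equals $\sup_k\lfloor\beta^k\rfloor/k$, and satisfies $\lfloor\beta\rfloor\leq\omega(\beta)\leq\lfloor\beta\rfloor+1$.

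With this in hand, (b), (c), (d) are short. For (c), centrality gives $(\Delta^2\alpha)^k=(\Delta^2)^k\alpha^k$, and iterating the first floor identity gives $\lfloor(\Delta^2)^k\alpha^k\rfloor=k+\lfloor\alpha^k\rfloor$; dividing by $k$ and letting $k\to\infty$ yields $\omega(\Delta^2\alpha)=\omega(\alpha)+1$. For (b), $\omega(\alpha^m)=\lim_k\lfloor\alpha^{mk}\rfloor/k=m\lim_k\lfloor\alpha^{mk}\rfloor/(mk)=m\,\omega(\alpha)$ for every positive integer $m$ (the stated case being $m=n$), since $(\alpha^{mk})_k$ is a subsequence of the convergent sequence computing $\omega(\alpha)$; applying this together with the quasimorphism bound to $\alpha$ and $\alpha^{-1}$ also gives $\omega(\alpha^{-1})=-\omega(\alpha)$, so the identity holds for all $m\in\mathbb{Z}$. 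For (d), the floor estimate applied twice shows $|\lfloor\beta\gamma\beta^{-1}\rfloor-\lfloor\gamma\rfloor|\leq 2+|\lfloor\beta\rfloor|+|\lfloor\beta^{-1}\rfloor|$ for every $\gamma$; taking $\gamma=\alpha^k$, using $(\beta\alpha\beta^{-1})^k=\beta\alpha^k\beta^{-1}$, dividing by $k$ and letting $k\to\infty$ gives $\omega(\beta\alpha\beta^{-1})=\omega(\alpha)$---the usual conjugacy-invariance of a homogenization.

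For (a), the estimates above immediately give the weaker bound $|\omega(\alpha\beta)-\omega(\alpha)-\omega(\beta)|\leq 2$ (combine $|\omega(\gamma)-\lfloor\gamma\rfloor|\leq1$ with super/sub-additivity of the floor). The sharp defect $1$ is precisely the quasimorphism estimate of \cite{Malyutin_Twistnumber}; alternatively it can be recognized as the Poincar\'e translation-number inequality for the dynamical realization of $(B_n,\succ)$ as a subgroup of $\mathrm{Homeo}^+(\mathbb{R})$ in which the cofinal central element $\Delta^2$ acts as $x\mapsto x+1$ and $\omega$ is the translation number.

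I expect (e) to be the main obstacle, since the purely order-theoretic arguments above only see $\lfloor\beta\rfloor\leq\omega(\beta)\leq\lfloor\beta\rfloor+1$ and give no control on denominators. Here I would pass to the mapping-class viewpoint, as in \cite{ItoKawamuro_OpenBookFoliations}: $\omega(\beta)$ is read off from the Nielsen--Thurston normal form of $\beta\in\mathrm{MCG}(D_n)$ and equals an integer plus the rotation number at $\partial D_n$ of the canonical periodic piece adjacent to the boundary; that piece is a periodic homeomorphism of a planar surface with at most $n+1$ boundary circles, whose order---hence the denominator of $\omega(\beta)$---is at most $n$. For the reverse inclusion I would exhibit explicit braids realizing each $p/q$ with $1\le q\le n$: the half-twist $\Delta$ has $\omega(\Delta)=\tfrac12$ by (b) since $\omega(\Delta^2)=1$; more generally, placing $q$ of the punctures symmetrically in a collar of $\partial D_n$ and enclosing the remaining $n-q$ by a round curve $c$, the braid $\delta_q$ rotating the collar by $\tfrac{2\pi}{q}$ (and leaving the inside of $c$ alone) satisfies $\delta_q^{\,q}=\Delta^2\cdot\tau_c^{\pm1}$, and since a Dehn twist $\tau_c$ about a non-$\partial$-parallel curve has vanishing fractional Dehn twist coefficient, (c) gives $\omega(\delta_q)=\tfrac1q$; combining $\delta_q^{\,r}$ with powers of $\Delta^2$ via (b) and (c) then produces every $p/q$.
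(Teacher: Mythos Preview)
The paper does not prove this proposition; it is quoted from \cite{Malyutin_Twistnumber} and \cite{ItoKawamuro_OpenBookFoliations}, with only the remark that (a)--(d) ``can be proved directly from the definition in terms of the Dehornoy floor'', that (d) is ``a straightforward consequence of properties (a)--(b)'', and that Malyutin's proof of (e) ``requires a different, but equivalent, definition \ldots\ and involves considering cases depending on the Nielsen--Thurston classification''. Your proposal is a faithful and correct fleshing-out of exactly this outline: you extract (b), (c), (d) from sub/superadditivity of the Dehornoy floor and Fekete's lemma, you acknowledge that the floor argument alone yields only defect $\leq 2$ for (a) and point to the translation-number description for the sharp constant, and for (e) you invoke the Nielsen--Thurston canonical form for the denominator bound together with explicit periodic-type braids for the realization.

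Two minor remarks. First, your derivation of (d) via a direct floor estimate is fine, but note that once (a) and (b) are in hand, conjugacy invariance is automatic for any homogeneous quasimorphism (apply (a) to $\beta\alpha^k$ and $\beta^{-1}$, use $(\beta\alpha\beta^{-1})^k=\beta\alpha^k\beta^{-1}$ and (b), divide by $k$); this is what the paper's one-line comment is pointing at, and it avoids tracking constants. Second, your realization argument in (e) is on the right track but the identity $\delta_q^{\,q}=\Delta^2\tau_c^{\pm1}$ deserves care: the full rotation of the boundary collar is $\Delta^2\tau_c^{-1}$ (Dehn twist along $\partial D_n$ composed with the inverse twist along $c$), and $\omega(\tau_c)=0$ follows because $\tau_c$ is conjugate to a braid word omitting $a_1^{\pm1}$, so Malyutin's generator-count bound forces $\omega=0$. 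With that clarified, the construction indeed produces every $p/q$ with $1\le q\le n$.
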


Note that Dehornoy's floor is not a conjugacy invariant, but the fractional Dehn twist coefficient is. Properties $(a)-(d)$ of the fractional Dehn twist coefficient can be proved directly from the definition in terms of the Dehornoy floor. In fact, Property $(d)$ is a straightforward consequence of properties $(a)-(b)$. Malyutin's proof of Property $(e)$ requires a different, but equivalent, definition of the fractional Dehn twist coefficient and involves considering cases depending on the Nielsen-Thurston classification of the braid in question.

Very briefly, to define the fractional Dehn twist coefficient in this alternate way, one can consider the compactification of the universal cover of $D_{n}$ embedded in $\mathbb{H}^{2}$, use the action of the lift of $\beta$ to this universal cover to define a map $\Theta: B_{n} \to \widetilde{Homeo^{+}(S^{1})}$, and define $\omega(\beta)$ to be the translation number of $\Theta(\beta)$. For a more thorough discussion, see~\cite{Malyutin_Twistnumber}, \cite{ItoKawamuro_OpenBookFoliations}, and~\cite{Plamenevskaya_RightVeering}.  For yet another alternate and equivalent definition that demonstrates more clearly that the fractional Dehn twist coefficient is measuring the amount of (signed) twisting a braid realizes around $\partial D_{n}$, see~\cite{HondaKazezMatic_RightVeeringI}, \cite{KazezRoberts_FDTC}, and \cite{ItoKawamuro_OpenBookFoliations}.  Both of these alternate definitions generalize easily beyond braids to elements in mapping class groups of surfaces with boundary.

The fractional Dehn twist coefficient has been extensively studied in the context of contact topology and open book decompositions (see for instance \cite{HondaKazezMatic_RightVeeringI}, \cite{HondaKazezMatic_RightVeeringII}, \cite{KazezRoberts_FDTC}, \cite{ItoKawamuro_OpenBookFoliations}, \cite{BaldwinEtnyre_admissiblesurgery}, \cite{HeddenMark_FloerFDTC}) and of course in the context of classical braids (\cite{Malyutin_Twistnumber}, \cite{MalyutinNetsvetaev_03}). Relationships have also been explored between the fractional Dehn twist coefficient and monoids in the mapping class group (\cite{Etnyre_MonoidsMappingClassGroup}, \cite{ItoKawamuro_OnAQuestion}), classical knot theory (\cite{KazezRoberts_FDTC}), and homological invariants of knots and 3-manifolds (\cite{HeddenMark_FloerFDTC}, \cite{Plamenevskaya_RightVeering}) .

\section{The homogenization of Upsilon}\label{sec:PropofhomoofUpsilon}

In this section, we discuss properties of the homogenization of Ozsv\'ath, Stipsicz, and Szab\'o's $\Upsilon$.
Rather than recalling the definition of $\Upsilon$ using the $CFK^\infty(K)$ knot Floer complex, we will only recall some of its properties and work with those. This is appropriate since our results would hold for any other invariant that satisfies these properties.
While no other such invariants are known as of this writing\footnote{Recently, Grigsby-Wehrli-Licata \cite{GrigsbyLicataWehrli_16} and Lewark-Lobb \cite{LewarkLobb_17_Upsilonlike} defined $\Upsilon$-type invariants using annular Khovanov cohomology and higher $sl_N$-Khovanov-Rozansky cohomologies, respectively. However, neither of these invariants fit the framework of an $\Upsilon$-type invariant as needed here.}, one might hope for such invariants to be found in the future, in a similar way as Ozsv\'ath and Szab\'o's $\tau$ invariant (which is generalized by $\Upsilon$) led to the discovery of invariants with similar properties (e.g.~the Rasmussen $s$-invariant).
In addition to the original article~\cite{OSS_2014},  Livingston's note~\cite{Livingston_NotesOnUpsilon} is a good and short reference for the definition and properties of $\Upsilon$.

We delay most of the proofs of the statements in this section to the end of the paper (see Appendix~\ref{app:homogenizationofconcordancehomos}) for the following reasons: first, these proofs are somewhat long but standard arguments using language from knot concordance theory and do not constitute the core of the argument of our main results.
Additionally, these proofs are best given in a general setting of homogenization of concordance knot invariants rather than the specific case of $\Upsilon$, so, for future reference, an independent appendix seems more appropriate.

\subsection*{Background on the concordance homomorphism $\Upsilon$}
Recall that two links $K$ and $L$ are called \emph{concordant} if there exists an oriented smooth embedding of a disjoint union of annuli in $S^3\times[0,1]$ such that the oriented boundary is $K\times\{0\}\cup L^{\textrm{rev}}\times\{1\}$, where $L^{\textrm{rev}}$ denotes the result of reversing the orientation of $L$. 
Knots up to concordance form a group, called the \emph{concordance group}:
\[\mathcal{C}=(\{\text{concordance classes of knots}\},\#),\]
where $\#$ denotes the operation induced by connected sum of knots. {For all knots $K$, the knot $-K$ given by taking the mirror of $K$ and reversing orientation, represents the inverse of the class of $K$ in $\mathcal{C}$.}

Ozsv\'ath, Stipsicz, and Szab\'o associate to a knot $K$ (in fact, to its concordance class) a piecewise linear function $\Upsilon_K\colon[0,2]\to\R$, which turns out to be a strong tool in detecting free subgroups and free summands of the concordance group; see~\cite{OSS_2014}.
In what follows we consider $\Upsilon_K$ as a function on $[0,1]$ by restriction without losing any information since $\Upsilon(t)=\Upsilon(2-t)$ for all $t\in[0,2]$; see~Proposition~1.2 in \cite{OSS_2014}.

We summarize all the properties of $\Upsilon$ needed in this paper in the following proposition.
For this, recall that the \emph{$3$-genus} or \emph{genus} $g(K)$ of a knot $K$ is the smallest genus among smooth oriented surfaces in $S^3$ with boundary $K$. Similarly, the \emph{smooth $4$-ball genus} or \emph{slice genus} $g_4(K)$ of a knot $K$ is the smallest genus among smoothly embedded surfaces in the $4$-ball $B^4$ with boundary $K\subset S^3=\partial B^4$.
For positive coprime integers $p$ and $q$, the knot given as the closure of the $p$-braid $(a_1a_2\cdots a_{p-1})^q$ is denoted $T_{p,q}$ and called the $(p,q)$-\emph{torus knot}.

\begin{prop}[\cite{OSS_2014}]\label{prop:PropOfU} Let $\rm{PL}[0,1]$ denote the group (with respect to addition in the target) of piecewise-linear, $\R$-valued, continuous functions on $[0,1]$. There exists a group homomorphism, the \emph{Upsilon-invariant},
\[\Upsilon\colon\mathcal{C}\to \rm{PL}[0,1]\] that satisfies the following properties:
\begin{itemize}
\item
  \cite[Theorem~1.11]{OSS_2014}: For all knots $K$ and all $t\in[0,1]$, $\left|\Upsilon_K(t)\right|\leq tg_4(K)$.
  \item
  \cite[Theorem~1.13]{OSS_2014}: For all knots $K$, the absolute value of the slopes of $\Upsilon_K$ is bounded above by $g(K)$.
  \item
  \cite[Theorem~1.15 and Proposition~6.3]{OSS_2014}: For positive integers $n$ and $k$, 
\[\Upsilon_{T_{n,nk+1}}=-tg_4(T_{n,nk+1})=-tg(T_{n,nk+1})=-t\frac{n(n-1)k}{2}\for t\leq \frac{2}{n}\et\]
\[\Upsilon_{T_{n,nk+1}}=-tg_4(T_{n,nk+1})+nk(t-\frac{2}{n})\for \frac{2}{n}\leq t\leq \min\{\frac{2}{n-1},1\}.\qed\]

\end{itemize}

\end{prop}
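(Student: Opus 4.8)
The plan is to assemble the statement from results in \cite{OSS_2014}, supplying only the small bookkeeping steps that are not literally there. First I would invoke the existence of the piecewise-linear invariant $\Upsilon_K\colon[0,2]\to\R$ together with the symmetry $\Upsilon_K(t)=\Upsilon_K(2-t)$ from \cite[Proposition~1.2]{OSS_2014}; this symmetry is exactly what makes the restriction to $[0,1]$ lose no information. To obtain the homomorphism $\Upsilon\colon\mathcal{C}\to\mathrm{PL}[0,1]$, I would combine concordance invariance and additivity $\Upsilon_{K_1\#K_2}=\Upsilon_{K_1}+\Upsilon_{K_2}$ from \cite{OSS_2014} with the observation that $\Upsilon_{-K}=-\Upsilon_K$: indeed $K\#(-K)$ is slice, so $\Upsilon_{K\#(-K)}\equiv 0$ by the first bullet point (which applies with $g_4=0$), and additivity then forces $\Upsilon_{-K}=-\Upsilon_K$. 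The first two bullet points themselves are \cite[Theorem~1.11]{OSS_2014} and \cite[Theorem~1.13]{OSS_2014} verbatim, so nothing remains to be done for them.

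For the torus-knot bullet I would first pin down the genera. Since $T_{n,nk+1}$ is the closure of the positive braid $(a_1\cdots a_{n-1})^{nk+1}$, Seifert's algorithm on this diagram yields a surface of Euler characteristic $n-(n-1)(nk+1)$, which is genus-minimizing (positive braid closures are fibered), so $g(T_{n,nk+1})=\tfrac{(n-1)(nk+1)-n+1}{2}=\tfrac{n(n-1)k}{2}$. Then $g_4=g$ here because $g(T_{n,nk+1})=\tau(T_{n,nk+1})\leq g_4(T_{n,nk+1})\leq g(T_{n,nk+1})$, using $|\tau|\leq g_4\leq g$ and the classical value $\tau(T_{n,nk+1})=\tfrac{n(n-1)k}{2}$ (alternatively, invoke the slice--Bennequin inequality for the above positive braid, whose writhe is $(n-1)(nk+1)$). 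With the genera identified, the two displayed formulas are read off from the torus-knot computation of $\Upsilon$ in \cite[Theorem~1.15 and Proposition~6.3]{OSS_2014}: near $0$ the slope is $-\tau(T_{n,nk+1})=-g_4(T_{n,nk+1})$, linearity persists until the first breakpoint, and both the location $t=\tfrac{2}{n}$ of that breakpoint and the size $nk$ of the slope jump there come from the shape of the staircase complex of the $L$-space knot $T_{n,nk+1}$, which is controlled by the numerical semigroup $\langle n,\,nk+1\rangle$.

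I expect the only non-mechanical step to be this last one: reconciling the formula for $\Upsilon$ of torus knots in \cite{OSS_2014} with the closed form stated here, and in particular verifying that the first breakpoint sits precisely at $t=\tfrac{2}{n}$ (this is where the ``gap'' structure of $\langle n,\,nk+1\rangle$ enters). I would carry it out by direct comparison with \cite[Section~6]{OSS_2014}, using Livingston's reformulation of $\Upsilon$ in \cite{Livingston_NotesOnUpsilon} as an independent cross-check; everything else is routine.
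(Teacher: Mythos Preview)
Your proposal is correct, but the paper does not provide a proof of this proposition at all: it is stated as a summary of results from \cite{OSS_2014} with explicit theorem references for each bullet, and simply ends with a \qed. The paper explicitly says just before the proposition that ``rather than recalling the definition of $\Upsilon$ \ldots\ we will only recall some of its properties and work with those,'' so the proposition is treated as a black box imported from the literature.

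Your write-up goes well beyond what the paper does. The verification that $\Upsilon$ descends to a homomorphism on $\mathcal{C}$, the genus computation for $T_{n,nk+1}$ via Seifert's algorithm and fiberedness, and the identification $g_4=g$ via $\tau$ or slice--Bennequin are all reasonable and correct, but none of this appears in the paper; the authors simply cite \cite[Theorem~1.15 and Proposition~6.3]{OSS_2014} for the torus-knot formula and move on. If you want to match the paper, a one-line ``These are \cite[Proposition~1.2, Theorems~1.11, 1.13, 1.15, and Proposition~6.3]{OSS_2014}'' suffices.
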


\subsection*{The homogenization of $\Upsilon$}
The Upsilon-invariant can be used to construct an invariant of (conjugacy classes of) braids, called the \emph{homogenization of $\Upsilon$}, as follows:
\[\HU\colon B_n\to \rm{Cont}[0,1], \beta\mapsto \lim_{k\to\infty}\frac{\Upsilon_{\widehat{\beta^k\varepsilon_{\beta^k}}}}{k},\]
where $\varepsilon_{\beta^k}$ is a shortest possible (as a word in the generators $a_i$) $n$-braid such that the closure of $\beta^k\varepsilon_{\beta^k}$ is a knot rather than a link; concretely, $\varepsilon_{\beta^k}$ can be chosen to be the product of at most $n-1$ generators $a_i$. Here $\rm{Cont}[0,1]$ denotes the real-valued continuous functions on $[0,1]$.

In~\cite{Brandenbursky_11}, Brandenbursky studied this construction in a more general context: for any knot invariant $I$ that descends to a homomorphism $I\colon \mathcal{C}\to \R$ with $|I(K)|\leq t_I g_4(K)$ for all knots $K$ and some real constant $t_I$, he showed there is a well-defined (independent of the choice of shortest possible $\varepsilon_{\beta^k}$)
 map 
\[\HI\colon B_n\to \R, \beta\mapsto \lim_{k\to\infty}\frac{I\left(\widehat{\beta^k\varepsilon_{\beta^k}}\right)}{k}.\]
For a fixed $t\in[0,1]$, $\Upsilon$ fits into this setting with $I=\Upsilon(t)$ and $t_{I}=t$ by Proposition~\ref{prop:PropOfU}.

We summarize properties of $\HU$ that hold for every fixed $t\in[0,1]$ and a fixed number of strands $n\geq1$; see Lemma~\ref{lemma:PropofHI} for a proof.
\begin{lemma}\label{lem:PropofHU}
\begin{enumerate}[I)]
  \item\label{item:HUofgenerator} For all $n$-braids $\beta$ and all $1\leq i\leq n-1$, $\left|\HU_{\beta a_i}(t)-\HU_{\beta}(t)\right|\leq \frac{t}{2}$.
  \item\label{item:HUhasDefectt(n-1)} For all $n$-braids $\alpha$ and $\beta$, $|\HU_{\alpha\beta}(t)-\HU_{\alpha}(t)-\HU_{\beta}(t)|\leq t(n-1)$. If $\alpha$ and $\beta$ commute, e.g.~if $\alpha$ is the $n$-stranded full twist $\Delta^2$ or a power of $\beta$, then
      $\HU_{\alpha\beta}(t)=\HU_{\alpha}(t)+\HU_{\beta}(t)$.
  \item \label{item:HUofUnions}If an $n$-braid $\beta$ is given as the disjoint union (see Figure \ref{fig:disjoint_union}) of braids $\beta_1$, \ldots, $\beta_l$ on $n_1$, \ldots , $n_l$ strands, respectively, then $\HU_\beta(t)=\sum_{i=1}^l \HU_{\beta_i}(t)$.\qed
\end{enumerate}
\end{lemma}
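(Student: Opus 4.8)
The plan is to obtain Lemma~\ref{lem:PropofHU} as the case $I=\Upsilon(t)$ of the general Lemma~\ref{lemma:PropofHI} on homogenizations of concordance homomorphisms, proved in Appendix~\ref{app:homogenizationofconcordancehomos}. Indeed, for each fixed $t\in[0,1]$, Proposition~\ref{prop:PropOfU} shows $\Upsilon(t)\colon\mathcal{C}\to\R$ is a group homomorphism with $|\Upsilon_K(t)|\leq t\,g_4(K)$, so it fits the framework of \cite{Brandenbursky_11} with constant $t_I=t$, and items~\ref{item:HUofgenerator}, \ref{item:HUhasDefectt(n-1)}, \ref{item:HUofUnions} of Lemma~\ref{lem:PropofHU} are the three items of Lemma~\ref{lemma:PropofHI} read off with $t_I=t$. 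So it suffices to prove the general statement, and I outline how. I would use three basic facts. (i) If knots $K,L$ cobound a connected genus-$g$ cobordism in $S^3\times[0,1]$, then $|I(K)-I(L)|\leq t_I g$: for then $K\#(-L)$ bounds a genus-$g$ surface in $B^4$, and $I(K)-I(L)=I(K\#(-L))$ since $I$ is a homomorphism and $-L$ represents the inverse of the class of $L$. (ii) The oriented resolution of a crossing in a braid word --- which deletes a generator --- and its reverse, inserting a generator, each realize one saddle of a cobordism between the respective closures; a cobordism built from $s$ saddles with a connected end is connected, so between two knots it has genus $\leq\lceil s/2\rceil$. (iii) $\HI_\beta$ is unchanged if for each $k$ the word $\varepsilon_{\beta^k}$ is replaced by any braid word with a number of Artin letters bounded independently of $k$ for which $\widehat{\beta^k\varepsilon}$ is still a knot, since two such words differ by boundedly many saddles; that the defining limits exist is also part of Lemma~\ref{lemma:PropofHI}, following \cite{Brandenbursky_11}.

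Items~\ref{item:HUofgenerator} and~\ref{item:HUofUnions} then follow directly. For~\ref{item:HUofgenerator}: the braid $(\beta a_i)^k$ has exactly $k$ more occurrences of $a_i$ than $\beta^k$, and resolving these $k$ crossings replaces each $a_i$ by the trivial two-strand braid, turning $(\beta a_i)^k$ into $\beta^k$; fixing short words making $\widehat{(\beta a_i)^k\varepsilon}$ and $\widehat{\beta^k\varepsilon'}$ knots, these two knots cobound a connected cobordism with $k+O(n)$ saddles, hence of genus $\leq k/2+O(n)$, so by (i) $|I(\widehat{(\beta a_i)^k\varepsilon})-I(\widehat{\beta^k\varepsilon'})|\leq t_I(k/2+O(n))$, and dividing by $k$ as $k\to\infty$ gives $|\HI_{\beta a_i}(t)-\HI_\beta(t)|\leq t_I/2=t/2$. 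For~\ref{item:HUofUnions}: since disjoint union commutes with taking powers, $(\beta_1\sqcup\cdots\sqcup\beta_l)^k=\beta_1^k\sqcup\cdots\sqcup\beta_l^k$; choose in the $i$-th block a shortest word $\varepsilon_{i,k}$ (at most $n_i-1$ letters) with $\widehat{\beta_i^k\varepsilon_{i,k}}$ a knot, together with $l-1$ more generators, one between each pair of consecutive blocks, which splice the split union $\widehat{\beta_1^k\varepsilon_{1,k}}\sqcup\cdots\sqcup\widehat{\beta_l^k\varepsilon_{l,k}}$ into the connected sum $\widehat{\beta_1^k\varepsilon_{1,k}}\#\cdots\#\widehat{\beta_l^k\varepsilon_{l,k}}$; the resulting correction word has at most $\sum_i(n_i-1)+(l-1)=n-1$ letters, so computes $\HI$ by (iii), and since $I$ is additive under connected sum its value on this closure is $\sum_i I(\widehat{\beta_i^k\varepsilon_{i,k}})$, whence $\HI_{\beta_1\sqcup\cdots\sqcup\beta_l}(t)=\sum_i\HI_{\beta_i}(t)$ on dividing by $k$ and letting $k\to\infty$.

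Item~\ref{item:HUhasDefectt(n-1)} is the substantial one: it says $\HI$ is a homogeneous quasimorphism on $B_n$ with defect at most $t(n-1)$. Homogeneity, $\HI_{\beta^j}=j\HI_\beta$, is immediate from the definition by passing to the subsequence of powers divisible by $j$. For the defect bound I would follow \cite{Brandenbursky_11} in this generality: the geometric input is that a closed composite braid $\widehat{\gamma\delta}$ and the connected sum $\widehat{\gamma}\#\widehat{\delta}$, summed along a component meeting both factors, cobound a cobordism with $n-1$ saddles --- visible because the Bennequin surfaces of the two sides have Euler characteristics differing by exactly $n-1$, and this difference is realized by $n-1$ band moves supported near the $n$-punctured disk separating the factors. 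Applying such relations along the powers of $\alpha\beta$, combined with additivity of $I$ and with (i), and controlling the auxiliary correction words, should yield $|I(\widehat{(\alpha\beta)^k\varepsilon})-I(\widehat{\alpha^k\varepsilon'})-I(\widehat{\beta^k\varepsilon''})|\leq t_I(n-1)k+o(k)$; dividing by $k$ gives $|\HI_{\alpha\beta}(t)-\HI_\alpha(t)-\HI_\beta(t)|\leq t(n-1)$. The commuting case is then formal: if $\alpha\beta=\beta\alpha$ then $(\alpha\beta)^m=\alpha^m\beta^m$, so applying the defect bound to $\alpha^m$ and $\beta^m$ and using homogeneity gives $|m\HI_{\alpha\beta}(t)-m\HI_\alpha(t)-m\HI_\beta(t)|\leq t(n-1)$, and letting $m\to\infty$ forces $\HI_{\alpha\beta}(t)=\HI_\alpha(t)+\HI_\beta(t)$; this includes $\alpha=\Delta^2$ and $\alpha$ a power of $\beta$.

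The main obstacle is Item~\ref{item:HUhasDefectt(n-1)}, and within it the bookkeeping needed to bring the defect down to the stated value $t(n-1)$ rather than a larger multiple of $tn$: one must organize the comparison of $\widehat{(\alpha\beta)^k}$ with $\widehat{\alpha^k}$ and $\widehat{\beta^k}$ --- and the choices of correction words along the way --- economically enough that the error per power is exactly $t(n-1)$. By contrast, Items~\ref{item:HUofgenerator} and~\ref{item:HUofUnions} are routine once the cobordism toolkit (i)--(iii) is set up.
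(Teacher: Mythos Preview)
Your proposal is correct and follows essentially the paper's approach: specialize Lemma~\ref{lemma:PropofHI} with $I=\Upsilon(t)$ and $t_I=t$, and prove that general lemma via the $(n-1)$-saddle cobordism between $\widehat{\gamma\delta}$ and $\widehat{\gamma}\#\widehat{\delta}$. The one step you leave vague---bringing the defect in item~\ref{item:HUhasDefectt(n-1)} down to exactly $t(n-1)$---is carried out in the paper by a two-stage cobordism (Figure~\ref{fig:cob2}): first $(n-1)k+O(1)$ saddles from $\widehat{(\alpha\beta)^k\varepsilon}$ to $\widehat{\alpha^k\varepsilon_\alpha}\# k\widehat{\beta}$, then $(n-1)(k-1)+O(1)$ more saddles to reach $\widehat{\alpha^k\varepsilon_\alpha}\#\widehat{\beta^k\varepsilon_\beta}$, for total genus $\sim(n-1)k$. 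Your direct connected-sum argument for item~\ref{item:HUofUnions} is a mild simplification over the paper, which instead first invokes the commuting case of item~\ref{item:HUhasDefectt(n-1)} to split $\HI(\beta)=\sum_i\HI(\beta'_i)$ and then separately checks $\HI(\beta'_i)=\HI(\beta_i)$.
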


\begin{figure}[h]
\centering
\includegraphics[scale=0.7]{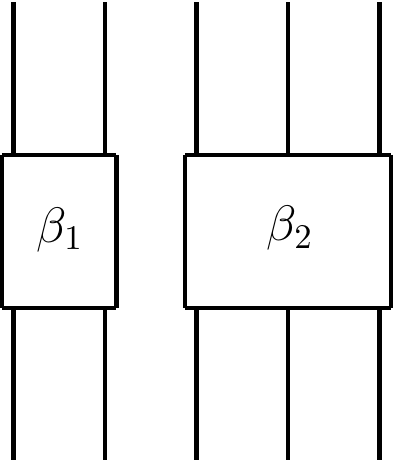}
\caption{The disjoint union of the $2$-braid $\beta_{1}$ and the $3$-braid $\beta_{2}$.}
\label{fig:disjoint_union}
\end{figure}

Our proof of Theorem~\ref{thmintro:FDTCofNonminimalBraidIsBounded} will use the characterization of the fractional Dehn twist coefficient in terms of $\HU$ provided in Theorem~\ref{thmintro:FDTCviaUpsilon} (and proved in Section~\ref{sec:FDTCviaUpsilon}), the generalized Jones conjecture as proven by~\cite[Theorem~9]{DynnikovPrasolov_13}, and the following bound on the difference of $\HU$ of braids that have isotopic or concordant closures.
The following is Proposition~\ref{prop:HI(alpha)-HI(beta)} for $I=\Upsilon(t)$.
\begin{prop}\label{prop:Upsilon_alpha-Upsilon_beta}
Fix positive integers $n$ and $m$. If an $n$-braid $\beta$ and an $m$-braid $\alpha$ have isotopic links as their closure (or, more generally, concordant links as their closure), then
\[\left|\HU_\beta(t)-\HU_\alpha(t)\right|\leq t\frac{n-1+m-1}{2}\for t\in [0,1].\qed\]
\end{prop}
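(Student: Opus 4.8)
The plan is to reduce the statement to the already-advertised general fact (Proposition~\ref{prop:HI(alpha)-HI(beta)} of the appendix, here applied with $I=\Upsilon(t)$), and to sketch why that general fact holds. The key point is that $\HU$ of a braid only depends on the concordance class of its closure up to a controlled error coming from the number of strands. Concretely, suppose $\beta$ is an $n$-braid and $\alpha$ is an $m$-braid whose closures $\widehat\beta$ and $\widehat\alpha$ are concordant links (in particular this covers the case that they are isotopic). Fix $t\in[0,1]$ and write $I=\Upsilon(t)$, which by Proposition~\ref{prop:PropOfU} satisfies $|I(K)|\le t\,g_4(K)$ for all knots $K$, so Brandenbursky's construction applies and $\HI=\HU(t)$ is well-defined.

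First I would use the fact that for each $k$, the knot $\widehat{\beta^k\varepsilon_{\beta^k}}$ is obtained from the link $\widehat{\beta^k}$ by adding a band for each of the at most $n-1$ generators in $\varepsilon_{\beta^k}$ needed to connect up the components; similarly on the $\alpha$ side with at most $m-1$ bands. Then $\widehat{\beta^k\varepsilon_{\beta^k}}$ and $\widehat{\alpha^k\varepsilon_{\alpha^k}}$ are both knots obtained from concordant links $\widehat{\beta^k}$ and $\widehat{\alpha^k}$ (they are concordant because $\widehat\beta$ and $\widehat\alpha$ are, and $k$-fold ``cabling''/taking $k$-th powers of braids preserves concordance of closures — more precisely, $\widehat{\beta^k}$ is a satellite-type operation that respects concordance; this is the standard observation underlying Brandenbursky's well-definedness argument) by at most $(n-1)+(m-1)$ bands total. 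A connected knot that is related to another connected knot by $b$ band moves through a concordant link has $g_4$ of the difference (the knot $\widehat{\beta^k\varepsilon_{\beta^k}}\#-\widehat{\alpha^k\varepsilon_{\alpha^k}}$) bounded by roughly $b/2$: each band contributes a half-genus to a cobordism, and composing with the concordance annuli between the links costs nothing. Hence
\[
\left| I\!\left(\widehat{\beta^k\varepsilon_{\beta^k}}\right) - I\!\left(\widehat{\alpha^k\varepsilon_{\alpha^k}}\right)\right|
= \left| I\!\left(\widehat{\beta^k\varepsilon_{\beta^k}}\,\#\,{-}\widehat{\alpha^k\varepsilon_{\alpha^k}}\right)\right|
\le t\,g_4\!\left(\widehat{\beta^k\varepsilon_{\beta^k}}\,\#\,{-}\widehat{\alpha^k\varepsilon_{\alpha^k}}\right)
\le t\,\frac{(n-1)+(m-1)}{2},
\]
using that $I$ is a homomorphism on $\mathcal C$ and the $g_4$-bound from Proposition~\ref{prop:PropOfU}; crucially the right-hand side is independent of $k$.

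Finally I would divide by $k$ and let $k\to\infty$: since $\HU_\beta(t)=\lim_k \frac1k\Upsilon_{\widehat{\beta^k\varepsilon_{\beta^k}}}(t)$ and likewise for $\alpha$, and the difference of the un-normalized quantities is bounded by the $k$-independent constant $t\frac{n-1+m-1}{2}$, the limit difference satisfies the same bound, giving $|\HU_\beta(t)-\HU_\alpha(t)|\le t\frac{n-1+m-1}{2}$ for all $t\in[0,1]$, as claimed.

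The main obstacle I anticipate is the bookkeeping in the second paragraph: making precise that taking $k$-th powers and then the closure sends concordant links to concordant links (so that after adding the $\varepsilon$-bands one really does get two knots cobounding a genus-$\le\frac{(n-1)+(m-1)}{2}$ surface in $B^4$), and that the number of bands needed is exactly controlled by $n-1$ and $m-1$ rather than something that grows with $k$. This is where one uses that $\varepsilon_{\beta^k}$ can always be taken to be a product of at most $n-1$ Artin generators (stated in the excerpt), so the band count is uniform in $k$; the concordance-of-closures claim is the routine but slightly technical input that is cleanest to prove in the general appendix setting, which is why the paper defers it there. Everything else — the homomorphism property of $\Upsilon$, the slice-genus bound, and passing to the homogenized limit — is immediate from the properties already collected in Proposition~\ref{prop:PropOfU} and the definition of $\HU$.
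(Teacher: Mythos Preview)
There is a genuine gap: your central claim that ``taking $k$-th powers of braids preserves concordance of closures'' is false, and this is precisely the step you flag as the main obstacle. A single example suffices: the $n$-braid $\beta=a_1a_2\cdots a_{n-1}$ and the trivial $1$-braid $\alpha$ both close up to the unknot, but $\widehat{\beta^k}$ is the torus link $T_{n,k}$ while $\widehat{\alpha^k}$ is still the unknot; these are certainly not concordant for $k\ge2$. Braid closure of a power is \emph{not} a satellite operation on the closure --- it depends on the braid itself, not just on the link type of its closure. In fact, if your argument worked it would prove too much: a $k$-independent bound on $\bigl|I(\widehat{\beta^k\varepsilon_{\beta^k}})-I(\widehat{\alpha^k\varepsilon_{\alpha^k}})\bigr|$ would, after dividing by $k$ and letting $k\to\infty$, force $\HU_\beta(t)=\HU_\alpha(t)$ identically, which the paper shows is false (Example~\ref{Ex:optofPropofHI} exhibits braids for which the inequality in Proposition~\ref{prop:HI(alpha)-HI(beta)} is an equality with nonzero right-hand side).

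The paper's fix is to route the cobordism not through $\widehat{\beta^k}$ and $\widehat{\alpha^k}$ but through the $k$-fold \emph{connected sums} $k\widehat{\beta}$ and $k\widehat{\alpha}$, which genuinely are concordant whenever $\widehat{\beta}$ and $\widehat{\alpha}$ are. The price is that the cobordism from $\widehat{\beta^k\varepsilon_{\beta^k}}$ to $k\widehat{\beta}$ now requires roughly $(k-1)(n-1)$ band moves (see Figure~\ref{fig:cob}), not a bounded number; likewise on the $\alpha$ side. So the total genus bound is linear in $k$, namely $\le k\frac{(n-1)+(m-1)}{2}$, and it is only \emph{after} dividing by $k$ that one recovers the stated inequality. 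The link case requires additional care in choosing along which components the connected sums are performed, so that the concordance between $\widehat{\beta}$ and $\widehat{\alpha}$ really induces one between $k\widehat{\beta}$ and $k\widehat{\alpha}$.
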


The value of $\Upsilon$ for torus knots $T_{n,kn+1}$ and $t\leq \frac{2}{n}$ (see Proposition~\ref{prop:PropOfU}) implies the following:
for an $n$-braid, we have $\HU(t)=-t\frac{\wr(\beta)}{2}$ for $t\leq\frac{2}{n}$; see~\cite[Corollary~4.2]{FellerKrcatovich_16_OnCobBraidIndexAndUpsilon} or Lemma~\ref{lemma:HIwhenSBIissharp}. Combined with Lemma~\ref{lem:PropofHU}.\ref{item:HUofUnions}, we can state this as follows:
\begin{lemma}\label{lemma:HUislinearforsmallt}
For all $n$-braids $\beta$, let $m\leq n$ be the smallest integer such that $\beta$ is the disjoint union of braids on $m$ or fewer strands. Then,
\[\HU_\beta(t)=-t\frac{wr(\beta)}{2}\for t\leq \min\left\{1,\frac{2}{m}\right\}.\qed\]
\end{lemma}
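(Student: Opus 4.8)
\textbf{Proof proposal for Lemma~\ref{lemma:HUislinearforsmallt}.}

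The plan is to reduce the statement to the already-stated fact that $\HU_\beta(t)=-t\frac{\wr(\beta)}{2}$ holds on $[0,\frac{2}{n}]$ for an $n$-braid $\beta$ whose closure ``uses all $n$ strands'' (i.e.\ the case covered by \cite[Corollary~4.2]{FellerKrcatovich_16_OnCobBraidIndexAndUpsilon}), together with the disjoint-union property from Lemma~\ref{lem:PropofHU}.\ref{item:HUofUnions}. First I would make precise the hypothesis: given an $n$-braid $\beta$, let $m\leq n$ be minimal such that, after conjugation/reindexing of strands, $\beta$ is a disjoint union $\beta_1\sqcup\cdots\sqcup\beta_l$ with each $\beta_i$ a braid on $n_i$ strands, $\sum n_i=n$, and each $n_i\leq m$; minimality of $m$ means that at least one $\beta_i$, say $\beta_1$, genuinely uses all $m=n_1$ of its strands, meaning it is not itself a disjoint union of braids on fewer strands after any conjugation. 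The writhe is additive under disjoint union, $\wr(\beta)=\sum_{i=1}^l\wr(\beta_i)$, and by Lemma~\ref{lem:PropofHU}.\ref{item:HUofUnions} we have $\HU_\beta(t)=\sum_{i=1}^l\HU_{\beta_i}(t)$.

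Next, for each block $\beta_i$ on $n_i$ strands, apply the ``base case'' \cite[Corollary~4.2]{FellerKrcatovich_16_OnCobBraidIndexAndUpsilon}: since $\beta_i$ uses all $n_i$ of its strands (if it did not, we could further decompose it, contradicting the way the decomposition was chosen, or at worst only helping the bound), we get $\HU_{\beta_i}(t)=-t\frac{\wr(\beta_i)}{2}$ for $t\leq \frac{2}{n_i}$. Since $n_i\leq m$ we have $\frac{2}{n_i}\geq\frac{2}{m}$, so in particular this identity holds for all $t\leq\min\{1,\frac{2}{m}\}$. Summing over $i$ and using additivity of both $\HU$ and $\wr$ under disjoint union yields
\[
\HU_\beta(t)=\sum_{i=1}^l\HU_{\beta_i}(t)=\sum_{i=1}^l\left(-t\frac{\wr(\beta_i)}{2}\right)=-t\frac{\wr(\beta)}{2}\for t\leq\min\left\{1,\frac{2}{m}\right\},
\]
which is exactly the claim.

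The main subtlety—really the only one—is a careful treatment of the disjoint-union/minimality bookkeeping: one must check that the minimal $m$ in the statement is the same as the maximum block size $\max_i n_i$ in an optimal disjoint-union decomposition (equivalently, that ``being a disjoint union of braids on $\le m$ strands'' is preserved under the conjugations needed to realize Lemma~\ref{lem:PropofHU}.\ref{item:HUofUnions}, which is a conjugacy invariant), and that each block in such a decomposition cannot be further split. If some block $\beta_i$ \emph{can} be nontrivially split into sub-blocks, that is harmless: one simply refines the decomposition until every block genuinely uses all its strands, which can only decrease the block sizes and hence only weakens the constraint $t\le 2/(\text{block size})$, so the conclusion on $[0,\min\{1,2/m\}]$ still follows. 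Everything else is a direct invocation of the already-established linearity on $[0,\frac{2}{n_i}]$ for each indecomposable block together with finite additivity; no further input from $\Upsilon$ or concordance theory is needed.
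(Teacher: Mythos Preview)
Your proposal is correct and is essentially the paper's own argument: the paper states just before the lemma that, for any $n$-braid, $\HU_\beta(t)=-t\frac{\wr(\beta)}{2}$ on $[0,\frac{2}{n}]$ (from \cite[Corollary~4.2]{FellerKrcatovich_16_OnCobBraidIndexAndUpsilon} or Lemma~\ref{lemma:HIwhenSBIissharp}), and then combines this with Lemma~\ref{lem:PropofHU}.\ref{item:HUofUnions}. One minor simplification: the ``uses all strands'' caveat you worry about is unnecessary, since the cited base case applies to \emph{every} $n_i$-braid on $[0,\frac{2}{n_i}]$ without any indecomposability hypothesis, so the minimality/refinement discussion can be dropped entirely.
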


Next, we discuss properties of $\HU$ as a function depending on $t$. 
As a consequence of the fact that the slopes of $\Upsilon$ are bounded by the $3$-genus (see~\cite[Theorem~1.13]{OSS_2014}), one finds that, for a fixed braid, $\HU$ is Lipschitz continuous:
\begin{prop}\label{prop:HUisLipschitz}
For all $n$-braids $\beta$, we have
\[|\HU_\beta(t)-\HU_\beta(s)|\leq |t-s|\frac{\l(\beta)}{2},\]
where $\l(\beta)$ denotes the minimal number of generators $a_i$ and their inverses needed to write $\beta$.
\end{prop}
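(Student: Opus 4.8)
The plan is to combine two ingredients. The first is that for every knot $K$ the function $\Upsilon_K$ is piecewise linear on $[0,1]$ with all slopes of absolute value at most the Seifert genus $g(K)$, by the second bullet of Proposition~\ref{prop:PropOfU}; hence $\Upsilon_K$ is $g(K)$-Lipschitz, i.e.\ $|\Upsilon_K(t)-\Upsilon_K(s)|\leq |t-s|\,g(K)$ for all $t,s\in[0,1]$. The second ingredient is a linear-in-$k$ bound on the genus of the knots $\widehat{\beta^k\varepsilon_{\beta^k}}$ appearing in the definition of $\HU_\beta$.

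For the genus bound, fix a word of length $\l:=\l(\beta)$ in the Artin generators and their inverses representing $\beta$; then $\beta^k\varepsilon_{\beta^k}$ is represented by a word of length $L_k\leq k\l+(n-1)$, since $\varepsilon_{\beta^k}$ may be taken to be a product of at most $n-1$ generators. Applying Seifert's algorithm to the closure of this braid word produces the Bennequin surface, consisting of $n$ disks joined by one (possibly twisted) band per letter, with Euler characteristic $n-L_k\geq 1-k\l$. Since $\widehat{\beta^k\varepsilon_{\beta^k}}$ is a knot, every generator $a_i$ with $1\leq i\leq n-1$ must occur in the word --- otherwise the closure would be a split link with at least two components --- so this surface is connected; being a connected surface with a single boundary component, its genus is $\tfrac12(1-\chi)\leq\tfrac12 k\l$. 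Therefore $g(\widehat{\beta^k\varepsilon_{\beta^k}})\leq\tfrac12 k\,\l(\beta)$.

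Combining the two ingredients, for every $k$ and all $t,s\in[0,1]$ we obtain
\[\left|\Upsilon_{\widehat{\beta^k\varepsilon_{\beta^k}}}(t)-\Upsilon_{\widehat{\beta^k\varepsilon_{\beta^k}}}(s)\right|\leq |t-s|\,g\!\left(\widehat{\beta^k\varepsilon_{\beta^k}}\right)\leq |t-s|\,\frac{k\,\l(\beta)}{2}.\]
Dividing by $k$ and passing to the limit --- which exists for both $t$ and $s$ by Brandenbursky's construction, as $\Upsilon(t)$ is a concordance homomorphism with $|\Upsilon_K(t)|\leq t\,g_4(K)$ --- gives $|\HU_\beta(t)-\HU_\beta(s)|\leq |t-s|\,\frac{\l(\beta)}{2}$, as claimed. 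I expect the only point requiring care to be the genus estimate for the braid closure, and in particular the observation that the Bennequin surface is connected exactly because a braid with knot closure must use every generator; everything else is a direct substitution into the definition of $\HU$ together with the slope bound for $\Upsilon$.
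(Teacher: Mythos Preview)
Your argument is correct and follows essentially the same route as the paper: bound the Seifert genus of $\widehat{\beta^k\varepsilon_{\beta^k}}$ by $\tfrac{k\l(\beta)}{2}$ via the Bennequin surface, apply the $g(K)$-Lipschitz bound for $\Upsilon_K$, divide by $k$, and pass to the limit. Your explicit justification that the Bennequin surface is connected (since a braid with knot closure must involve every $a_i^{\pm1}$) is a detail the paper leaves implicit.
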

While Proposition~\ref{prop:HUisLipschitz} is not used in the rest of the paper, it brings us to ask about the regularity of $\HU$; see Question~\ref{q:HUisPL}.

\begin{proof}[Proof of Proposition~\ref{prop:HUisLipschitz}]
We note that
\begin{equation}\label{eq:g<=l/2}g\left(\widehat{\beta^k\varepsilon_{\beta^k}}\right)\leq \frac{\l(\beta^k\varepsilon_{\beta^k})-(n-1)}{2}\leq \frac{k\l(\beta)+n-1-(n-1)}{2}\end{equation}
 since applying the Seifert algorithm to a standard diagram of the closure of an $n$-braid given by a braid word of length $l$ yields a Seifert surface of genus $\frac{l-n+1}{2}$.
 Thus,
\begin{align*}\left|\HU_\beta(t)-\HU_\beta(s)\right|
&=\lim_{k\to\infty}\left|\frac{\Upsilon_{\beta^k\varepsilon_{\beta^k}}(t)-\Upsilon_{\beta^k\varepsilon_{\beta^k}}(s)}{k}\right|
\\&\leq\liminf_{k\to\infty}|t-s|\frac{g\left(\widehat{\beta^k\varepsilon_{\beta^k}}\right)}{k}
\\&\leq |t-s|\frac{\l(\beta)}{2},\end{align*}
where in the second and third line we use that $\Upsilon_K$ is $g(K)$-Lipschitz continous for all knots (Proposition~\ref{prop:PropOfU}~\cite[Theorem~1.13]{OSS_2014}) and~\eqref{eq:g<=l/2}, respectively.
\end{proof}

\section[FDTC as a slope of the homogenization of Upsilon]{The fractional Dehn twist coefficient as a slope of the homogenization of Upsilon}\label{sec:FDTCviaUpsilon}
In this section, we study the homogenization of $\Upsilon$ for a fixed integer $n\geq2$.
We describe $\widetilde{\Upsilon}_\beta(t)$ for $t\in[\frac{2}{n},\frac{2}{n-1}]$ and all $n$-braids $\beta$. It turns out that
$\widetilde{\Upsilon}_\beta$ is linear on $[\frac{2}{n},\frac{2}{n-1}]$ with slope $\frac{-\wr(\beta)}{2}+n\fdtc(\beta)$, which is the content of Theorem~\ref{thmintro:FDTCviaUpsilon}. 

Let $\preceq$ ($\prec$) denote Dehornoy's (strict) total order.
\begin{lemma}\label{lemma:beta>1}
Let $\beta$ be an $n$-braid. If $\beta\succeq 1$, then $\widetilde{\Upsilon}_\beta(t)\geq-t\frac{\wr(\beta)}{2}$ for $t\leq\frac{2}{n-1}$.
\end{lemma}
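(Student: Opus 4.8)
\textit{Setup and first reductions.} If $\beta=1$ the claim is trivial, so assume $\beta\succ1$. Then $\beta$ admits a $\sigma_i$-positive word for some $1\le i<n$: a word in $a_i^{\pm1},\dots,a_{n-1}^{\pm1}$ that contains $a_i$ but not $a_i^{-1}$. As a geometric braid such a $\beta$ is the disjoint union of the trivial braid on $i-1$ strands with a $\sigma_1$-positive braid $\beta'$ on $n-i+1$ strands, so Lemma~\ref{lem:PropofHU}.\ref{item:HUofUnions} gives $\HU_\beta=\HU_{\beta'}$ and $\wr(\beta)=\wr(\beta')$. Let $m\le n$ be least with $\beta$ a disjoint union of braids on $\le m$ strands; Lemma~\ref{lemma:HUislinearforsmallt} gives $\HU_\beta(t)=-t\wr(\beta)/2$ for $t\le 2/m$. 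Hence we are done if $m\le n-1$ (in particular if $i\ge2$, since then $m\le n-i+1\le n-1$), and also for all $t\le 2/n$. So it remains to prove $\HU_\beta(t)\ge -t\wr(\beta)/2$ for $t\in[2/n,2/(n-1)]$ when $\beta$ is $\sigma_1$-positive and not a disjoint union of braids on fewer than $n$ strands. Write $\mathrm{gap}_t(\gamma):=\HU_\gamma(t)+\tfrac t2\wr(\gamma)$; the goal is $\mathrm{gap}_t(\beta)\ge0$. Two inputs will be used: (a) every positive braid $P$ has $\mathrm{gap}_t(P)\ge0$, since by Seifert's algorithm on the closed braid diagram $g_4(\widehat{P^k\varepsilon_{P^k}})\le g(\widehat{P^k\varepsilon_{P^k}})\le\tfrac12(\ell(P^k\varepsilon_{P^k})-n+1)\le\tfrac12 k\,\wr(P)$, so $\Upsilon_{\widehat{P^k\varepsilon_{P^k}}}(t)\ge -t\,g_4\ge -\tfrac t2 k\,\wr(P)$ and dividing by $k$ gives $\HU_P(t)\ge -\tfrac t2\wr(P)$; and (b) applying Proposition~\ref{prop:PropOfU} to $\Delta^{2k}(a_1\cdots a_{n-1})$ (whose closure is $T_{n,nk+1}$) and dividing by $k$, $\HU_{\Delta^2}(t)=-t\tfrac{n(n-1)}2+n(t-\tfrac2n)$ on $[2/n,2/(n-1)]$, so $\mathrm{gap}_t(\Delta^2)=n(t-\tfrac2n)\ge0$. (One also notes, by Lemma~\ref{lem:PropofHU}.\ref{item:HUofgenerator} and conjugacy invariance of $\HU$, that inserting a positive band $a_i$ anywhere in a braid word does not decrease $\mathrm{gap}_t$.)

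\textit{Reduction to Dehornoy-floor $0$.} By Lemma~\ref{lem:PropofHU}.\ref{item:HUhasDefectt(n-1)} applied to powers of $\beta$ one has $\HU_{\beta^k}=k\HU_\beta$, and $\wr(\beta^k)=k\wr(\beta)$, so $\mathrm{gap}_t(\beta^k)=k\,\mathrm{gap}_t(\beta)$ and it suffices to show $\mathrm{gap}_t(\beta^k)\ge -o(k)$ as $k\to\infty$. Since $\beta\succeq1$ forces $\lfloor\beta^k\rfloor\ge0$, write $\beta^k=\Delta^{2\lfloor\beta^k\rfloor}\rho_k$ with $\lfloor\rho_k\rfloor=0$, i.e.\ $1\preceq\rho_k\prec\Delta^2$. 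As $\Delta^2$ is central, Lemma~\ref{lem:PropofHU}.\ref{item:HUhasDefectt(n-1)} gives $\HU_{\beta^k}=\lfloor\beta^k\rfloor\HU_{\Delta^2}+\HU_{\rho_k}$; combined with $\wr(\beta^k)=\wr(\rho_k)+n(n-1)\lfloor\beta^k\rfloor$ this rearranges to
\[\mathrm{gap}_t(\beta^k)=\lfloor\beta^k\rfloor\,\mathrm{gap}_t(\Delta^2)+\mathrm{gap}_t(\rho_k)\ \ge\ \mathrm{gap}_t(\rho_k),\]
using $\lfloor\beta^k\rfloor\ge0$ and $\mathrm{gap}_t(\Delta^2)\ge0$. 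Thus everything reduces to a uniform lower bound on Dehornoy-floor-$0$ braids: it is enough to find a constant $C=C(t,n)$ so that $\mathrm{gap}_t(\rho)\ge -C$ for every $n$-braid $\rho$ with $1\preceq\rho\prec\Delta^2$.

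\textit{The floor-$0$ estimate, and the main obstacle.} This last bound is the heart of the matter. The approach I would take is to compare the knots $\widehat{\rho^j\varepsilon_{\rho^j}}$ with torus knots $T_{n,nm_j+1}$ whose writhe matches that of $\rho^j\varepsilon_{\rho^j}$ (so $m_j\approx j\,\wr(\rho)/(n(n-1))$), exploiting the behaviour of $\Upsilon$ under crossing changes that is forced by $|\Upsilon_K(t)|\le t\,g_4(K)$ (a crossing change moves $\Upsilon_K(t)$ monotonically by at most $t$), the direction of the change being dictated by $\rho^j\succeq1$ — equivalently, by the absence of a negative $\sigma_1$-handle in a $\sigma_1$-positive word for $\rho^j$. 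On the interval $t\le 2/(n-1)$, Proposition~\ref{prop:PropOfU} shows $\Upsilon_{T_{n,q}}(t)$ is still governed by the linear term $-t\,g_4$ together with only the single extra slope $n(t-\tfrac2n)$, so that the writhe discrepancy and the number of crossing changes should combine into a quantity that is $o(j)$; the restriction $t\le 2/(n-1)$ is exactly what makes this possible, since for larger $t$ the further break points of $\Upsilon_{T_{n,q}}$ would spoil the estimate (and this is why the lemma is stated only there). The main obstacle — the step I expect to consume most of the work — is making this comparison uniform over all floor-$0$ braids $\rho$: one must bound, sublinearly in $j$ and independently of $\rho$ with $1\preceq\rho\prec\Delta^2$, how far $\widehat{\rho^j\varepsilon_{\rho^j}}$ can be from a torus knot, which calls on the structure of $\sigma_1$-positive words of floor-$0$ braids together with the behaviour of $\HU$ under destabilization (Proposition~\ref{prop:Upsilon_alpha-Upsilon_beta}) and disjoint unions.
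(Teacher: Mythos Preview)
Your proposal is incomplete: the ``floor-$0$ estimate'' is only sketched, and you yourself flag it as the main obstacle without carrying it out. The crossing-change comparison with torus knots that you outline is vague, and making it uniform over all $\rho$ with $1\preceq\rho\prec\Delta^2$ is not addressed. So as it stands, the argument does not close.

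What is striking is that you already wrote down the key step and then walked past it. Your parenthetical observation---that inserting a positive generator anywhere does not decrease $\mathrm{gap}_t$---is exactly the paper's proof, applied in reverse. Concretely: take a $\sigma_1$-positive word for $\beta$, say $\beta=\alpha_0 a_1\alpha_1 a_1\cdots a_1\alpha_l$ with the $\alpha_j$ words in $a_2^{\pm1},\dots,a_{n-1}^{\pm1}$. Delete the $l$ occurrences of $a_1$ to obtain $\beta'=\alpha_0\alpha_1\cdots\alpha_l$. Then $\beta'$ is a disjoint union of a trivial strand with an $(n-1)$-braid, so Lemma~\ref{lemma:HUislinearforsmallt} gives $\HU_{\beta'}(t)=-t\,\wr(\beta')/2$ for $t\le 2/(n-1)$, i.e.\ $\mathrm{gap}_t(\beta')=0$. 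Since $\beta$ is obtained from $\beta'$ by inserting $l$ positive generators, your own parenthetical gives $\mathrm{gap}_t(\beta)\ge\mathrm{gap}_t(\beta')=0$, which is the claim. (Equivalently and more explicitly, $l$ applications of Lemma~\ref{lem:PropofHU}.\ref{item:HUofgenerator} give $|\HU_{\beta}(t)-\HU_{\beta'}(t)|\le tl/2$, and since $\wr(\beta')=\wr(\beta)-l$ one has $\HU_\beta(t)\ge\HU_{\beta'}(t)-tl/2=-t\wr(\beta')/2-tl/2=-t\wr(\beta)/2$.)

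This is precisely the paper's argument, and it is a few lines. Your Dehornoy-floor reduction, the comparison with $\Delta^2$, and the torus-knot heuristics are all unnecessary here; in fact the paper uses this lemma as the \emph{input} to the floor/full-twist analysis (Corollary~\ref{cor:beta>Delta} and Proposition~\ref{prop:HUofbeta}), not the other way around, so your strategy is running the logical dependencies backwards.
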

\begin{proof}
Without loss of generality, we may and do assume that $\beta$ cannot be written as a braid word without $a_1$ or $a_1^{-1}$, since otherwise $\widetilde{\Upsilon}_\beta(t)=-t\frac{\wr(\beta)}{2}$ for $t\leq\frac{2}{n-1}$ by Lemma~\ref{lemma:HUislinearforsmallt}.

By the definition of $\beta\succ 1$, we have
\[\beta=\alpha_0a_1\alpha_1a_1\cdots\alpha_{l-1}a_1\alpha_{l}\]
for an integer $l\geq 1$, where the $\alpha_i$ are braids `only involving strands 2 to $n$'; i.e.~the $\alpha_i$ can be given by braid words that do not contain $a_1$ or $a_1^{-1}$.

Let $\beta'$ be the $n$-braid given by $\alpha_0\alpha_1\cdots\alpha_{l-1}\alpha_{l}$.
Since $\beta'$ can be obtained from $\beta$ by deleting $l$ generators, $l$ times applying Lemma~\ref{lem:PropofHU}.\ref{item:HUofgenerator} yields
$\left|\widetilde{\Upsilon}_{\beta'}(t)-\widetilde{\Upsilon}_{\beta}(t)\right|\leq t\frac{l}{2}$. Therefore,
\[\widetilde{\Upsilon}_{\beta}(t)\geq \widetilde{\Upsilon}_{\beta'}(t)-t\frac{l}{2}=-t\frac{\wr(\beta')}{2}-t\frac{l}{2}=-t\frac{\wr(\beta)}{2}\]
for $t\leq \frac{2}{n-1}$, where the first equality uses Lemma~\ref{lemma:HUislinearforsmallt}. 
\end{proof}

Let $\Delta^2$ denote the $n$-braid $(a_1a_2\cdots a_{n-1})^n$, called the \emph{positive full twist}.
We have \begin{equation}\label{eq:HomUpsofFullTwist}
\HU_{\Delta^2\beta}=\HU_\beta+\HU_{\Delta^2}
=\HU_\beta+\lim_{k\to\infty}\frac{\Upsilon_{T_{n,kn+1}}}{k}
=\HU_\beta+\Upsilon_{T_{n,n+1}};\end{equation} where the first equality holds since $\Delta^2$ is in the center of $B_n$ (see Lemma~\ref{lem:PropofHU}\ref{item:HUhasDefectt(n-1)}), the second equality follows from choosing $\epsilon_{(\Delta^2)^k}$ to be $a_1a_2\cdots a_{n-1}$ in the definition of $\HU$ and noting that then the closure of $(\Delta^2)^k\epsilon_{(\Delta^2)^k}$ is the torus knot $T(n,kn+1)$, and the third equality is immediate from $\Upsilon_{T_{n,kn+1}}=k\Upsilon_{T_{n,n+1}}$ (see \cite[Proposition~2.2]{FellerKrcatovich_16_OnCobBraidIndexAndUpsilon}). Using~\eqref{eq:HomUpsofFullTwist} we establish the following.
\begin{corollary}\label{cor:beta>Delta}
Let $\beta$ be an $n$-braid. If $\beta\succeq \Delta^{2m}$, then $\widetilde{\Upsilon}_\beta(t)\geq-t\frac{\wr(\beta)}{2}+mn(t-\frac{2}{n})$ for $t\in[\frac{2}{n},\frac{2}{n-1}]$.
\end{corollary}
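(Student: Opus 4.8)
The plan is to reduce Corollary~\ref{cor:beta>Delta} to Lemma~\ref{lemma:beta>1} by pulling out $m$ copies of the full twist. Since $\beta\succeq\Delta^{2m}$, left-invariance of Dehornoy's order gives $\Delta^{-2m}\beta\succeq 1$; write $\gamma:=\Delta^{-2m}\beta$, so $\gamma$ is an $n$-braid with $\gamma\succeq 1$ and $\beta=\Delta^{2m}\gamma$. Applying Lemma~\ref{lemma:beta>1} to $\gamma$ yields $\widetilde{\Upsilon}_\gamma(t)\geq -t\frac{\wr(\gamma)}{2}$ for $t\leq\frac{2}{n-1}$, and in particular on $[\frac{2}{n},\frac{2}{n-1}]$.

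Next I would apply equation~\eqref{eq:HomUpsofFullTwist} $m$ times (equivalently, use that $\Delta^2$ is central so Lemma~\ref{lem:PropofHU}\ref{item:HUhasDefectt(n-1)} gives strict additivity) to get
\[
\widetilde{\Upsilon}_\beta(t)=\widetilde{\Upsilon}_{\Delta^{2m}\gamma}(t)=\widetilde{\Upsilon}_\gamma(t)+m\,\Upsilon_{T_{n,n+1}}(t).
\]
Now I combine the lower bound on $\widetilde{\Upsilon}_\gamma$ with the explicit formula for $\Upsilon_{T_{n,n+1}}$ on $[\frac{2}{n},\frac{2}{n-1}]$ coming from Proposition~\ref{prop:PropOfU} (the $k=1$ case): $\Upsilon_{T_{n,n+1}}(t)=-t\,g_4(T_{n,n+1})+n(t-\frac{2}{n})=-t\frac{n(n-1)}{2}+n(t-\frac{2}{n})$ for $t\in[\frac{2}{n},\min\{\frac{2}{n-1},1\}]$. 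This gives
\[
\widetilde{\Upsilon}_\beta(t)\geq -t\frac{\wr(\gamma)}{2}+m\Big(-t\frac{n(n-1)}{2}+n(t-\tfrac{2}{n})\Big)
=-t\frac{\wr(\gamma)+mn(n-1)}{2}+mn(t-\tfrac{2}{n}).
\]

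Finally I would record the bookkeeping on writhes: $\wr(\Delta^2)=n(n-1)$ (it is the product of $n(n-1)$ positive generators), so $\wr(\beta)=\wr(\Delta^{2m}\gamma)=mn(n-1)+\wr(\gamma)$, and hence $\wr(\gamma)+mn(n-1)=\wr(\beta)$. Substituting gives exactly $\widetilde{\Upsilon}_\beta(t)\geq -t\frac{\wr(\beta)}{2}+mn(t-\frac{2}{n})$ on $[\frac{2}{n},\frac{2}{n-1}]$, as claimed. There is no serious obstacle here; the only points needing a moment of care are invoking left-invariance of $\preceq$ correctly to obtain $\Delta^{-2m}\beta\succeq 1$ and making sure the interval $[\frac{2}{n},\frac{2}{n-1}]$ on which the torus-knot formula is linear matches the interval in the statement (which it does, since the formula in Proposition~\ref{prop:PropOfU} is valid for $\frac{2}{n}\leq t\leq\min\{\frac{2}{n-1},1\}$, and when $n=2$ the interval $[\frac22,\frac21]$ degenerates harmlessly to a point where the inequality is the content of Lemma~\ref{lemma:beta>1} at $t=1$).
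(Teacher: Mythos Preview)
Your proof is correct and follows essentially the same approach as the paper: both arguments set $\gamma=\Delta^{-2m}\beta\succeq 1$, apply Lemma~\ref{lemma:beta>1} to $\gamma$, use~\eqref{eq:HomUpsofFullTwist} together with the explicit value of $\Upsilon_{T_{n,n+1}}$ from Proposition~\ref{prop:PropOfU}, and then convert $\wr(\gamma)$ back to $\wr(\beta)$ via $\wr(\Delta^2)=n(n-1)$. The only difference is cosmetic---the paper packages this as a single chain of (in)equalities while you separate the steps.
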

\begin{proof}
By definition, $\beta\succeq \Delta^{2m}$ means $\Delta^{-2m}\beta\succeq 1$. For $t\in[\frac{2}{n},\frac{2}{n-1}]$, we calculate
\begin{align*}
-t\frac{wr(\Delta^{-2m})}{2}-t\frac{wr(\beta)}{2}&=-t\frac{wr(\Delta^{-2m}\beta)}{2}\\&\leq\HU_{\Delta^{-2m}\beta}(t)
\\&=\HU_{\Delta^{-2m}}(t)+\HU_{\beta}(t)
\\&=-m\Upsilon_{T_{n,n+1}}(t)+\HU_{\beta}(t)
\\&=-m\left(-t\frac{n(n-1)}{2}+n(t-\frac{2}{n})\right)+\HU_{\beta}(t)
\\&=-t\frac{wr(\Delta^{-2m})}{2}-nm(t-\frac{2}{n})+\HU_{\beta}(t),
\end{align*}
where Lemma~\ref{lemma:beta>1} is used in the second line, \eqref{eq:HomUpsofFullTwist} is used in the fouth line, and the value for $\Upsilon_{T_{n,n+1}}$ as provided in~\cite[Proposition~6.3]{OSS_2014} (see Proposition~\ref{prop:PropOfU}) is used in the fifth line. This yields the desired lower bound for $\HU_{\beta}$.
\end{proof}

We note that, since $\beta\preceq 1$ if and only if $\beta^{-1}\succeq 1$ and $\HU_{\beta^{-1}}=-\HU_{\beta}$, Lemma~\ref{lemma:beta>1} and Corollary~\ref{cor:beta>Delta} also hold when replacing $\succeq$ and $\geq$ by $\preceq$ and $\leq$, respectively. This allows to conclude the following:

\begin{prop}\label{prop:HUofbeta}
Let $\beta$ be an $n$-braid. Assume $\beta$ has Dehornoy floor $\lfloor\beta\rfloor=m\in\Z$, i.e.~$\Delta^{2m+2}~\succ\beta\succeq \Delta^{2m}$. Then, for $t\in[\frac{2}{n},\frac{2}{n-1}]$, we have
\[-t\frac{\wr(\beta)}{2}+(m+1)n(t-\frac{2}{n})\geq\HU_\beta(t)\geq-t\frac{\wr(\beta)}{2}+mn(t-\frac{2}{n})
.\qed\]
\end{prop}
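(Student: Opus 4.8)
\noindent\emph{Proof proposal.} The plan is to obtain the two displayed inequalities directly from Corollary~\ref{cor:beta>Delta} together with its mirrored form — the one produced by the symmetry $\beta\mapsto\beta^{-1}$ recorded in the remark immediately preceding this proposition — using the two halves of the Dehornoy-floor hypothesis $\Delta^{2m+2}\succ\beta\succeq\Delta^{2m}$ separately.

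First I would handle the lower bound. The hypothesis $\lfloor\beta\rfloor=m$ gives in particular $\beta\succeq\Delta^{2m}$, so Corollary~\ref{cor:beta>Delta} applies verbatim and yields $\HU_\beta(t)\geq-t\frac{\wr(\beta)}{2}+mn(t-\frac{2}{n})$ for $t\in[\frac{2}{n},\frac{2}{n-1}]$, which is exactly the right-hand inequality. For the upper bound I would use the other half of the floor condition, $\Delta^{2m+2}\succ\beta$, which in particular gives $\beta\preceq\Delta^{2(m+1)}$. Since $\Delta^2$ is central, this is equivalent to $\beta^{-1}\succeq\Delta^{-2(m+1)}$; applying Corollary~\ref{cor:beta>Delta} to the braid $\beta^{-1}$ with exponent $-(m+1)$, and using $\HU_{\beta^{-1}}=-\HU_\beta$ together with $\wr(\beta^{-1})=-\wr(\beta)$, gives $-\HU_\beta(t)=\HU_{\beta^{-1}}(t)\geq t\frac{\wr(\beta)}{2}-(m+1)n(t-\frac{2}{n})$ on the same interval. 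Rearranging produces $\HU_\beta(t)\leq-t\frac{\wr(\beta)}{2}+(m+1)n(t-\frac{2}{n})$, the left-hand inequality. (Equivalently, one can just quote the mirrored statement of Corollary~\ref{cor:beta>Delta} directly, with $m$ replaced by $m+1$.)

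The only point requiring a moment's care is the sign bookkeeping on the interval $[\frac{2}{n},\frac{2}{n-1}]$: there $t-\frac{2}{n}\geq 0$, so negating and passing from $\beta$ to $\beta^{-1}$ genuinely flips the estimate from $\geq$ to $\leq$, and the two bounds then sandwich $\HU_\beta(t)$ in the stated way. I do not expect any real obstacle here — the substantive work is already contained in Corollary~\ref{cor:beta>Delta} and in the linearity of $\Upsilon_{T_{n,n+1}}$ used to prove it; this step is simply the combination of the two one-sided estimates.
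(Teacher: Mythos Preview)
Your proposal is correct and is essentially identical to the paper's own argument: the paper states just before this proposition that Lemma~\ref{lemma:beta>1} and Corollary~\ref{cor:beta>Delta} also hold with $\succeq,\geq$ replaced by $\preceq,\leq$ (via $\beta\mapsto\beta^{-1}$), and then marks the proposition with a \qed. Your write-up simply makes that one-line deduction explicit, applying Corollary~\ref{cor:beta>Delta} to $\beta\succeq\Delta^{2m}$ for the lower bound and its mirrored form to $\beta\preceq\Delta^{2(m+1)}$ for the upper bound.
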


Using the characterization of the fractional Dehn twist coefficient as
$\fdtc(\beta)=\lim_{k\to\infty}\frac{\lfloor\beta^k\rfloor}{k}$, Proposition~\ref{prop:HUofbeta} yields
Theorem~\ref{thmintro:FDTCviaUpsilon}, which we restate as follows.

\newtheorem*{thmCharOfFDTC}{Theorem~\ref{thmintro:FDTCviaUpsilon}}
\begin{thmCharOfFDTC}
  Fix an integer $n\geq 2$. For all $n$-braids $\beta$, we have
  \[\widetilde{\Upsilon}_\beta(t)=\left\{\begin{array}{c}
-t\frac{\wr(\beta)}{2}\for t\leq \frac{2}{n}\\
-t\frac{\wr(\beta)}{2}+\fdtc(\beta) n(t-\frac{2}{n}) \for \frac{2}{n}\leq t\leq \frac{2}{n-1}
\end{array}\right..\]
\end{thmCharOfFDTC}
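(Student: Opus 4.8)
The goal is to upgrade the two-sided bounds of Proposition~\ref{prop:HUofbeta} into the exact formula by passing to powers $\beta^k$ and homogenizing, exactly as the sentence preceding the restated theorem indicates. For $t\le \frac2n$ the formula is already Lemma~\ref{lemma:HUislinearforsmallt} (note that $\beta$ being a disjoint union on fewer than $n$ strands is not an issue here: if $\beta$ is such a union, then one checks the claimed formula reduces, via Lemma~\ref{lem:PropofHU}.\ref{item:HUofUnions} and homogeneity/additivity of $\fdtc$ and $\wr$, to the same statement for each summand on fewer strands, and in that degenerate case the second interval $[\frac2n,\frac2{n-1}]$ still contains only points where the formula follows from the small-$t$ regime; so I will either reduce to the non-split case or note that Lemma~\ref{lemma:beta>1} and its consequences were already stated for general $n$-braids). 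So the substance is the interval $\frac2n\le t\le\frac2{n-1}$.

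The plan is as follows. Fix $t$ in that interval. Apply Proposition~\ref{prop:HUofbeta} not to $\beta$ but to $\beta^k$: writing $m_k=\lfloor\beta^k\rfloor$, we get
\[
-t\tfrac{\wr(\beta^k)}{2}+(m_k+1)n\bigl(t-\tfrac2n\bigr)\ \geq\ \HU_{\beta^k}(t)\ \geq\ -t\tfrac{\wr(\beta^k)}{2}+m_k n\bigl(t-\tfrac2n\bigr).
\]
Now use the two homogenization identities: $\HU_{\beta^k}(t)=k\,\HU_\beta(t)$ — this is the ``$\alpha$ a power of $\beta$'' case of Lemma~\ref{lem:PropofHU}.\ref{item:HUhasDefectt(n-1)}, since $\HU_{\beta^k}=\HU_{\beta^{k-1}\cdot\beta}=\HU_{\beta^{k-1}}+\HU_\beta$ by induction — and $\wr(\beta^k)=k\,\wr(\beta)$, which is immediate from $\wr$ being a homomorphism $B_n\to\Z$. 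Divide the whole chain of inequalities by $k$:
\[
-t\tfrac{\wr(\beta)}{2}+\tfrac{m_k+1}{k}\,n\bigl(t-\tfrac2n\bigr)\ \geq\ \HU_{\beta}(t)\ \geq\ -t\tfrac{\wr(\beta)}{2}+\tfrac{m_k}{k}\,n\bigl(t-\tfrac2n\bigr).
\]
Here I use $t-\frac2n\geq 0$ to keep inequality directions straight. Finally let $k\to\infty$: by Malyutin's description of the fractional Dehn twist coefficient recalled in Section~\ref{sec:FDTC}, $\frac{m_k}{k}=\frac{\lfloor\beta^k\rfloor}{k}\to\fdtc(\beta)$, and likewise $\frac{m_k+1}{k}\to\fdtc(\beta)$, so both outer terms converge to the same value $-t\frac{\wr(\beta)}{2}+\fdtc(\beta)\,n(t-\frac2n)$. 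By the squeeze this forces $\HU_\beta(t)$ to equal it. Since $t\in[\frac2n,\frac2{n-1}]$ was arbitrary (and the endpoint $t=\frac2n$ is consistent with the first line), this is the claimed formula; continuity of $\HU_\beta$ (Proposition~\ref{prop:HUisLipschitz}, or simply that it lies in $\mathrm{Cont}[0,1]$) handles the boundary gluing and shows the two pieces fit into one continuous — indeed piecewise linear — function.

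I do not expect a serious obstacle: the two-sided estimate of Proposition~\ref{prop:HUofbeta} does all the heavy lifting, and everything else is homogenization bookkeeping. The one point to be careful about is that Proposition~\ref{prop:HUofbeta} is stated for a braid whose Dehornoy floor is a well-defined integer, which is automatic for every braid, so applying it to each $\beta^k$ is legitimate; and one must make sure the squeeze argument is applied for each fixed $t$ separately before invoking continuity, rather than trying to pass to a limit of functions. A secondary point, flagged above, is the degenerate case where $\frac2{n-1}>1$, i.e. $n=2$: then $\min\{\frac2{n-1},1\}=1$ and the interval is $[1,1]$ (a point) together with $[0,1]$ from the first clause, so there is nothing extra to prove, but one should remark that the statement as restated uses $\frac2{n-1}$ and implicitly the intersection with $[0,1]$, matching the ``$\min$'' in the introduction's version of Theorem~\ref{thmintro:FDTCviaUpsilon}.
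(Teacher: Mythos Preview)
Your proposal is correct and follows essentially the same argument as the paper's proof: apply Proposition~\ref{prop:HUofbeta} to $\beta^k$, use the homogeneity $\HU_{\beta^k}=k\HU_\beta$ and $\wr(\beta^k)=k\wr(\beta)$, divide by $k$, and pass to the limit using $\lfloor\beta^k\rfloor/k\to\fdtc(\beta)$. Your added remarks on the split case and the $n=2$ endpoint are harmless elaborations (the paper's lemmas are stated for general $n$-braids and the introduction's version records the $\min\{\tfrac{2}{n-1},1\}$), but they do not change the approach.
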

In other words, $\widetilde{\Upsilon}_\beta(t)$ is linear on $[0,\frac{2}{n}]$ and $[\frac{2}{n},\frac{2}{n-1}]$ with change of slope equal to $\fdtc(\beta)n$ at $\frac{2}{n}$.
\begin{proof}[Proof of Theorem~\ref{thmintro:FDTCviaUpsilon}]
By Lemma~\ref{lemma:HUislinearforsmallt}, $\HU_\beta(t)$ is linear on $[0,\frac{2}{n}]$ with slope $-\frac{\wr(\beta)}{2}$. Thus, we only discuss the case $t\in [\frac{2}{n},\frac{2}{n-1}]$.
For any integer $k>0$, we have
\begin{equation}\label{eq:HUofbeta^k}
\frac{-t\frac{\wr(\beta^k)}{2}+(\lfloor\beta^k\rfloor+1)n(t-\frac{2}{n})}{k}\geq\frac{\HU_{\beta^k}(t)}{k}
\geq\frac{-t\frac{\wr(\beta^k)}{2}+\lfloor\beta^k\rfloor n(t-\frac{2}{n})}{k},\end{equation}
by Proposition~\ref{prop:HUofbeta}.

Note that the writhe of a braid is homogeneous and, by construction, $\HU$ is homogeneous, i.e.~$\wr(\beta^k)=k\wr(\beta)$ and $\HU_{\beta^k}=k\HU_{\beta}$, respectively, for all integers $k$ and all braids $\beta$.
With this we rewrite~\eqref{eq:HUofbeta^k} as
\[-t\frac{\wr(\beta)}{2}+\frac{(\lfloor\beta^k\rfloor)n(t-\frac{2}{n})}{k}+\frac{n(t-\frac{2}{n})}{k}\geq\HU_{\beta}(t)\geq-t\frac{\wr(\beta)}{2}+\frac{\lfloor\beta^k\rfloor n(t-\frac{2}{n})}{k},\] from which the result follows by taking the limit $k\to\infty$.
\end{proof}

Theorem \ref{thmintro:FDTCviaUpsilon} combined with property (e) of Proposition \ref{fdtcproperties} immediately yields:

\begin{corollary}\label{cor:upsilonrational} For every braid group $B_{n}$, the set of all possible changes in slope of $\HU(t)$ at $\frac{2}{n}$ is precisely $\{\frac{np}{q} | p \in \mathbb{Z}, q \in \mathbb{Z}, 1 \leq q \leq n\}$. Each of these values is realized by some braid in $B_{n}$.\qed
\end{corollary}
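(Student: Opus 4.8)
The plan is simply to combine Theorem~\ref{thmintro:FDTCviaUpsilon} with part~(e) of Proposition~\ref{fdtcproperties}. By Theorem~\ref{thmintro:FDTCviaUpsilon}, for every $n$-braid $\beta$ the function $\HU_\beta$ is linear on $[0,\frac{2}{n}]$ with slope $-\frac{\wr(\beta)}{2}$ and linear on $[\frac{2}{n},\frac{2}{n-1}]$ with slope $-\frac{\wr(\beta)}{2}+\fdtc(\beta)n$; hence the change of slope of $\HU_\beta$ at $t=\frac{2}{n}$ is exactly $\fdtc(\beta)n$. Therefore the set of all possible slope changes at $\frac{2}{n}$, as $\beta$ ranges over $B_n$, equals $n\cdot\{\fdtc(\beta)\mid\beta\in B_n\}$.

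It then remains only to invoke Proposition~\ref{fdtcproperties}(e), which states the \emph{equality} of sets $\{\fdtc(\beta)\mid\beta\in B_n\}=\{\frac{p}{q}\mid p\in\Z,\ q\in\Z,\ 1\le q\le n\}$. Scaling by $n$ gives $\{\frac{np}{q}\mid p\in\Z,\ q\in\Z,\ 1\le q\le n\}$, which is the claimed set. The two inclusions of Proposition~\ref{fdtcproperties}(e) do different jobs here: the inclusion $\{\fdtc(\beta)\}\subseteq\{\frac{p}{q}\}$ gives the word ``precisely'', while the reverse inclusion gives that each value $\frac{np}{q}$ is realized by some $n$-braid, namely any $\beta$ with $\fdtc(\beta)=\frac{p}{q}$.

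There is no real obstacle: both ingredients are already established, so the argument is a one-line combination. The only thing worth being careful about is that one genuinely needs \emph{both} directions of Proposition~\ref{fdtcproperties}(e), and that this set equality is the statement actually proved by Malyutin (and recalled here). If a self-contained realizability statement were desired one could instead exhibit explicit braids — integer values $p$ are realized by $\Delta^{2p}$ via Proposition~\ref{fdtcproperties}(c), and values with denominator $q$ by suitable periodic braids — but quoting Proposition~\ref{fdtcproperties}(e) as a black box is cleaner and is the approach I would take.
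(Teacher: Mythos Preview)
Your proof is correct and matches the paper's approach exactly: the paper states that the corollary follows immediately from Theorem~\ref{thmintro:FDTCviaUpsilon} combined with Proposition~\ref{fdtcproperties}(e), and gives no further argument. Your observation that both inclusions of Proposition~\ref{fdtcproperties}(e) are needed is a nice clarification, but the underlying logic is identical.
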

For instance, the $4$-braid $A = a_{1}a_{2}a_{3}a_{3}$ has fractional Dehn twist coefficient $\frac{1}{3}$ (since one can first see using braid relations that $A^{3} = \Delta^{2}$, and then apply (b) and (c) from Proposition \ref{fdtcproperties}), and so $\HU_{A}(t)$ has slope change $\frac{4}{3}$ at $\frac{1}{2}$. The $5$-braid $B = a_{1}a_{2}a_{3}a_{4}a_{1}a_{2}$ also has fractional Dehn twist coefficient $\frac{1}{3}$ and so $\HU_{B}(t)$ has slope change $\frac{5}{3}$ at $\frac{2}{5}$. Here we calculate the fractional Dehn twist coefficient of $B$ by first observing that $\omega(a_{1}a_{2}a_{3}a_{4}) \leq \omega(B) \leq \omega((a_{1}a_{2}a_{3}a_{4})^2)$ (this is an application of Lemma 5.2 in ~\cite{Malyutin_Twistnumber}), which implies that $\frac{1}{5} \leq \omega(B) \leq \frac{2}{5}$ as $(a_{1}a_{2}a_{3}a_{4})^5 = \Delta^{2}$. Then combining the fact that $B$ is a pseudo-Anosov braid~\cite{Ham_Song_PseudoAnosov5Braid} and Malyutin's restrictions  in~\cite{Malyutin_Twistnumber} on which values of the fractional Dehn twist coefficient are realized by pseudo-Anosov braids yields the calculation. Notice that Corollary~\ref{cor:upsilonrational} is in contrast to the situation for $\Upsilon$, which only has integral slopes (and hence only integral changes in slope).

\section{Homogenization of Upsilon and braid index}\label{sec:UpsilonandBraidIndex}
Based on the characterization of $\fdtc$ in terms of the slope of $\HU$ (Theorem~\ref{thmintro:FDTCviaUpsilon}), we derive Theorem~\ref{thmintro:FDTCofNonminimalBraidIsBounded} about the braid index and $\fdtc$. A key element of our proof is the generalized Jones Conjecture as proven by Dynnikov and Prasolov ~\cite[Theorem~9]{DynnikovPrasolov_13}, which we quote here for reference:

\newtheorem*{GeneralizedJonesConjecture}{Theorem \cite[Theorem~9]{DynnikovPrasolov_13}}

\begin{GeneralizedJonesConjecture} Suppose braids $\beta_{1} \in B_{m}$ and $\beta_{2} \in B_{n}$ represent the same class of oriented links and $\beta_{1}$ has the smallest possible number of strands in that class. Then
$$ | \wr(\beta_{2}) - \wr(\beta_{1}) | \leq n-m.$$
\end{GeneralizedJonesConjecture}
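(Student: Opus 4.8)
The plan is to reformulate the inequality as the statement that a minimal-strand braid representative of $L$ simultaneously maximizes two ``self-linking'' quantities, and then to deduce this maximality from a monotonic simplification result of Markov-theorem-without-stabilization type. To each braid representative $\gamma\in B_k$ of $L$ I attach the pair $(k,\wr(\gamma))$, equivalently the two numbers $\mathrm{sl}^+(\gamma)=\wr(\gamma)-k$ and $\mathrm{sl}^-(\gamma)=-\wr(\gamma)-k$; these are the self-linking numbers of the transverse representatives of $L$ and of its mirror obtained from the closed braid, and by the transverse Markov theorem (Orevkov--Shevchishin, Wrinkle) $\mathrm{sl}^+$ is invariant under conjugation, exchange moves, and positive (de)stabilization, while $\mathrm{sl}^-$ is invariant under conjugation, exchange moves, and negative (de)stabilization. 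A one-line computation shows that $|\wr(\beta_2)-\wr(\beta_1)|\le n-m$ is \emph{equivalent} to the pair of inequalities $\mathrm{sl}^+(\beta_2)\le \mathrm{sl}^+(\beta_1)$ and $\mathrm{sl}^-(\beta_2)\le \mathrm{sl}^-(\beta_1)$. (Specializing to $n=m$ this is the original Jones conjecture: the writhe of a minimal-strand representative is a link invariant.)

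Next I would record the effect of the Markov moves on $(k,\wr)$: conjugation and exchange moves preserve $(k,\wr)$ by inspection, and flypes and the remaining strand-number-preserving ``template'' moves do too, being (up to ambient isotopy) rotations of a subtangle and hence preserving all local crossing signs; a negative destabilization sends $(k,\wr)\mapsto(k-1,\wr+1)$, fixing $\mathrm{sl}^-$ and increasing $\mathrm{sl}^+$ by $2$; a positive destabilization sends $(k,\wr)\mapsto(k-1,\wr-1)$, fixing $\mathrm{sl}^+$ and increasing $\mathrm{sl}^-$ by $2$. Thus along any sequence built from these moves both $\mathrm{sl}^+$ and $\mathrm{sl}^-$ are non-decreasing, the number of strands is non-increasing, and the number of destabilizations used equals the net decrease in the number of strands.

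The heart of the argument is then the \emph{monotonic simplification theorem}: any braid representative of $L$ can be carried to a minimal-strand representative by a finite sequence of conjugations, exchange moves, flypes (and, a priori, finitely many further template moves), and destabilizations, \emph{never increasing the number of strands}. This is exactly the sort of statement targeted by Birman--Menasco's Markov-theorem-without-stabilization program, and proved in the form needed here by Dynnikov--Prasolov via rectangular (grid) diagrams together with a ``bypass'' calculus adapted to them, and independently by LaFountain--Menasco via braid foliations on embedded annuli cobounded by two braid axes. Granting it, reducing $\beta_2\in B_n$ to a minimal-strand representative uses precisely $n-m$ destabilizations and otherwise writhe-preserving moves, so the endpoint has writhe within $n-m$ of $\wr(\beta_2)$; running the same reduction on the given $\beta_1$ uses no destabilizations and hence fixes $\wr(\beta_1)$, and the $n=m$ case (the classical Jones conjecture, delivered by the same machinery) identifies the writhes of the two minimal-strand endpoints, giving $|\wr(\beta_2)-\wr(\beta_1)|\le n-m$.

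The main obstacle is the monotonic simplification theorem itself; it genuinely requires the braid-foliation / grid-diagram combinatorics and cannot be obtained from soft contact-geometric bounds. The Bennequin inequality only bounds $\mathrm{sl}^\pm(\gamma)$ by a constant depending on $L$ (such as $2g_4(L)-1$ for a knot), which would bound $\wr(\beta_2)-\wr(\beta_1)$ by roughly $2g_4(L)+n+m$ rather than $n-m$ --- far too weak, since the Morton--Franks--Williams and Bennequin defects from the braid index can be arbitrarily large. What one must actually prove is the \emph{monotonicity}: a non-minimal closed braid always admits, possibly after exchange-move and flype adjustments, an honest destabilization, and controlling exactly when and how this happens --- through the disk/annulus braid foliations of Birman--Menasco or the rectangular-diagram bypasses of Dynnikov--Prasolov --- is the technical core. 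Once that is in place, the two-sided writhe estimate is the elementary bookkeeping of the previous paragraphs.
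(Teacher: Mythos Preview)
The paper does not prove this statement at all: it is quoted verbatim as \cite[Theorem~9]{DynnikovPrasolov_13} and used as a black box in the proof of Theorem~\ref{thmintro:FDTCofNonminimalBraidIsBounded}. So there is no ``paper's own proof'' to compare your proposal against.

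That said, your outline is an accurate high-level summary of how the result is actually established in the literature. The reformulation in terms of $\mathrm{sl}^\pm$, the bookkeeping under (de)stabilization and exchange moves, and the reduction to a monotonic-simplification statement are all correct, and you are right that the entire weight rests on that last step --- precisely what Dynnikov--Prasolov prove via rectangular diagrams and bypasses, and LaFountain--Menasco via braid foliations. Your proposal is therefore not so much a proof as an honest roadmap that defers the substantive work to those papers; as written it would not stand alone, but you correctly identify where the real content lies and why softer inputs (Bennequin-type bounds) are insufficient.
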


We now recall Theorem ~\ref{thmintro:FDTCofNonminimalBraidIsBounded} before proving it.
\newtheorem*{thmFDTCandBraidIndex}{Theorem~\ref{thmintro:FDTCofNonminimalBraidIsBounded}}
\begin{thmFDTCandBraidIndex}
Fix an integer $n\geq 2$. For any $n$-braid $\beta$ such that there exists an $(n-1)$-braid with isotopic closure, we have $|\fdtc(\beta)|\leq n-1$.
\end{thmFDTCandBraidIndex}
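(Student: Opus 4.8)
The plan is to combine three ingredients: the characterization of $\fdtc(\beta)$ as the slope change of $\HU_\beta$ at $\frac{2}{n}$ (Theorem~\ref{thmintro:FDTCviaUpsilon}), the bound on the difference of $\HU$ for braids with isotopic closures (Proposition~\ref{prop:Upsilon_alpha-Upsilon_beta}), and the generalized Jones Conjecture of Dynnikov--Prasolov. Suppose $\beta\in B_n$ and $\alpha\in B_{n-1}$ have isotopic closures; let $m\le n-1$ be the braid index of that common closure, so there is a braid $\gamma\in B_m$ with the same closure. First I would reduce to comparing $\HU_\beta$ with $\HU_\gamma$, where $\gamma$ realizes the braid index: by Proposition~\ref{prop:Upsilon_alpha-Upsilon_beta}, $|\HU_\beta(t)-\HU_\gamma(t)|\le t\frac{n+m-2}{2}$ for all $t\in[0,1]$.

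Next I would exploit that $\HU_\gamma$ is \emph{linear} on the whole interval $[0,\frac{2}{m}]\supseteq[0,\frac{2}{n-1}]$ with slope $-\frac{\wr(\gamma)}{2}$ (Lemma~\ref{lemma:HUislinearforsmallt}, using $m\le n-1$ so that $\frac{2}{n}<\frac{2}{n-1}\le\frac{2}{m}$), whereas $\HU_\beta$ on $[\frac{2}{n},\frac{2}{n-1}]$ is linear with slope $-\frac{\wr(\beta)}{2}+n\fdtc(\beta)$ by Theorem~\ref{thmintro:FDTCviaUpsilon}. Evaluate the inequality $|\HU_\beta(t)-\HU_\gamma(t)|\le t\frac{n+m-2}{2}$ at the two endpoints $t=\frac{2}{n}$ and $t=\frac{2}{n-1}$ and subtract (or equivalently compare slopes on the subinterval): the difference of the two linear functions on $[\frac{2}{n},\frac{2}{n-1}]$ has slope $\bigl(-\frac{\wr(\beta)}{2}+n\fdtc(\beta)\bigr)-\bigl(-\frac{\wr(\gamma)}{2}\bigr)$, and since a linear function bounded in absolute value by $t\frac{n+m-2}{2}$ on $[\frac{2}{n},\frac{2}{n-1}]$ has slope of absolute value at most $\frac{n+m-2}{2}$ plus a correction of order $\frac{n+m-2}{2}\cdot\frac{\text{endpoint gap}}{\text{interval length}}$, one extracts a bound of the shape
\[
\Bigl|\,n\fdtc(\beta)-\tfrac{\wr(\gamma)-\wr(\beta)}{2}\,\Bigr|\ \le\ (\text{something})\cdot\tfrac{n+m-2}{2}.
\]
Here the generalized Jones Conjecture enters: since $\gamma$ realizes the braid index and $\beta\in B_n$, we have $|\wr(\beta)-\wr(\gamma)|\le n-m$, so $\bigl|\tfrac{\wr(\gamma)-\wr(\beta)}{2}\bigr|\le\frac{n-m}{2}$.

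Assembling these, $n|\fdtc(\beta)|$ is bounded by a sum of terms each controlled by $n$, $m$ with $m\le n-1$; carrying the arithmetic through (the worst case being $m=n-1$, where the interval $[\frac{2}{n},\frac{2}{n-1}]$ is genuinely present) should yield exactly $n|\fdtc(\beta)|\le n(n-1)$, i.e.\ $|\fdtc(\beta)|\le n-1$. In detail I expect to bound $|\HU_\beta(\tfrac{2}{n-1})-\HU_\gamma(\tfrac{2}{n-1})|$ and $|\HU_\beta(\tfrac{2}{n})-\HU_\gamma(\tfrac{2}{n})|$ separately by $\tfrac{2}{n-1}\cdot\tfrac{n+m-2}{2}$ and $\tfrac{2}{n}\cdot\tfrac{n+m-2}{2}$ respectively, plug in $m=n-1$ and the Jones bound $|\wr(\beta)-\wr(\gamma)|\le 1$, and read off the inequality on $\fdtc(\beta)$ after clearing denominators.

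The main obstacle is bookkeeping rather than conceptual: one must be careful that when $m<n-1$ the relevant interval still behaves well (here $\HU_\gamma$ is linear on an even larger interval, so this only helps), and one must track the writhe terms precisely so that the $-t\frac{\wr}{2}$ contributions cancel correctly at the endpoints, leaving only the $\fdtc$ slope term and the error terms. A secondary subtlety is that Proposition~\ref{prop:Upsilon_alpha-Upsilon_beta} is stated for the given $n$- and $m$-braids, so I should apply it directly to $\beta$ and $\gamma$ (the braid-index realizer), which is legitimate since their closures are isotopic; the intermediate $(n-1)$-braid $\alpha$ is only used to know that $m\le n-1$. I expect the computation to close with equality-type sharpness exactly when $m=n-1$, consistent with the remark in the introduction that the examples achieving $\fdtc=n-2$ show the result is nearly optimal.
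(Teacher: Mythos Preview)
Your three ingredients are exactly the paper's, and reducing to a braid $\gamma\in B_m$ realizing the braid index is correct. The gap is in your endpoint strategy: evaluating at \emph{both} $t=\frac{2}{n}$ and $t=\frac{2}{n-1}$ and subtracting (i.e.\ comparing slopes) loses too much. The interval $[\frac{2}{n},\frac{2}{n-1}]$ has length $\frac{2}{n(n-1)}$, so bounding the slope of the linear difference $\HU_\beta-\HU_\gamma$ via the triangle inequality on the two endpoint values gives
\[
\Bigl|n\fdtc(\beta)-\tfrac{\wr(\beta)-\wr(\gamma)}{2}\Bigr|\ \le\ \frac{n+m-2}{2}\cdot\frac{\frac{2}{n}+\frac{2}{n-1}}{\frac{2}{n(n-1)}}\ =\ \frac{(n+m-2)(2n-1)}{2},
\]
and combining with Jones this does \emph{not} yield $|\fdtc(\beta)|\le n-1$ for $n\ge 3$ (e.g.\ for $n=3$, $m=2$ you get $|\fdtc|\le 8/3$). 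Your hope that ``carrying the arithmetic through'' gives $n(n-1)$ is unfounded.

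The fix is simpler than your plan: evaluate at the \emph{single} point $t=\frac{2}{n-1}$. There, by Theorem~\ref{thmintro:FDTCviaUpsilon} and Lemma~\ref{lemma:HUislinearforsmallt},
\[
\HU_\beta\!\left(\tfrac{2}{n-1}\right)-\HU_\gamma\!\left(\tfrac{2}{n-1}\right)
=\frac{1}{n-1}\bigl(-\wr(\beta)+\wr(\gamma)+2\fdtc(\beta)\bigr),
\]
and Proposition~\ref{prop:Upsilon_alpha-Upsilon_beta} bounds the left side by $\frac{2}{n-1}\cdot\frac{n+m-2}{2}=\frac{n+m-2}{n-1}$. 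Multiplying through by $n-1$ and applying the triangle inequality with the Jones bound $|\wr(\beta)-\wr(\gamma)|\le n-m$ gives
\[
2|\fdtc(\beta)|\le (n-m)+(n+m-2)=2(n-1),
\]
with the $m$'s cancelling exactly (so no ``worst case $m=n-1$'' analysis is needed). This is precisely the paper's argument.
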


\begin{proof}
Let $\alpha$ be an $m$-braid such that $\alpha$ and $\beta$ have the same closure, where $m\leq n-1$ is the braid index of the closure of $\beta$.

By the generalized Jones Conjecture as proven by Dynnikov and Prasolov~\cite[Theorem~9]{DynnikovPrasolov_13}, we have $|\wr(\beta)-\wr(\alpha)|\leq n-m$. 

We use Proposition~\ref{prop:Upsilon_alpha-Upsilon_beta} with $t=\frac{2}{n-1}$ to find
\begin{align*}\left|\left(-\wr(\beta)+2n\fdtc(\beta)(1-\frac{n-1}{n})\right)+\wr(\alpha)\right|
&=\left|2t^{-1}\left(\HU_\beta(t)-\HU_\alpha(t)\right)\right|
\\&\leq n+m-2,\end{align*}
where the equality and the inequality are given by Theorem~\ref{thmintro:FDTCviaUpsilon} and Proposition~\ref{prop:Upsilon_alpha-Upsilon_beta}, respectively. Therefore, we have
\[2|\fdtc(\beta)|\leq |\wr(\alpha)-\wr(\beta)|+n+m-2\leq 2n-2,\]
as claimed.
\end{proof}
\begin{Remark}
In terms of $\HU$, Theorem~\ref{thmintro:FDTCofNonminimalBraidIsBounded} states that, given an $n$-braid for which the absolute value of the slope change of $\HU$ at $\frac{2}{n}$ is strictly larger than $n(n-1)$, said braid realises the braid index of its closure.
\end{Remark}
\begin{corollary}(Compare to Conjecture 7.4 of~\cite{MalyutinNetsvetaev_03})
Fix an integer $n\geq2$. If an $n$-braid $\beta$ satisfies $\Delta^{2n}\preceq\beta$ or $\beta\preceq\Delta^{-2n}$, then the closure of $\beta$ does not arise as the closure of a braid on $n-1$ or fewer strands.
\end{corollary}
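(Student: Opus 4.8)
The plan is to deduce the corollary directly from Theorem~\ref{thmintro:FDTCofNonminimalBraidIsBounded} by translating the Dehornoy-order hypothesis into a lower bound on $|\fdtc(\beta)|$. First I would observe that $\Delta^{2n}\preceq\beta$ means, by definition of the Dehornoy floor, that $\lfloor\beta\rfloor\geq n$; hence $\Delta^{2nk}\preceq\beta^k$ for every $k\geq 1$ (using left-invariance of $\succeq$ and that $\Delta^2$ is central, so powers of $\beta$ stay above the corresponding powers of $\Delta^2$), which gives $\lfloor\beta^k\rfloor\geq nk$. Dividing by $k$ and taking the limit, $\fdtc(\beta)=\lim_{k\to\infty}\frac{\lfloor\beta^k\rfloor}{k}\geq n$. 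Symmetrically, $\beta\preceq\Delta^{-2n}$ forces $\fdtc(\beta)\leq -n$, so in either case $|\fdtc(\beta)|\geq n>n-1$.

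Next I would invoke the contrapositive of Theorem~\ref{thmintro:FDTCofNonminimalBraidIsBounded}: that theorem says that if an $n$-braid $\beta$ admits an $(n-1)$-braid (hence also a braid on fewer than $n-1$ strands, since one can stabilize) with isotopic closure, then $|\fdtc(\beta)|\leq n-1$. Since we have just shown $|\fdtc(\beta)|\geq n$, the closure of $\beta$ cannot be the closure of any braid on $n-1$ or fewer strands. One small point to handle cleanly: Theorem~\ref{thmintro:FDTCofNonminimalBraidIsBounded} is phrased for the existence of an $(n-1)$-braid; to cover ``$n-1$ or fewer strands'' I would note that any $m$-braid with $m<n-1$ can be destabilized up to an $(n-1)$-braid with the same closure by adding trivial strands (Markov stabilization), so the hypothesis of Theorem~\ref{thmintro:FDTCofNonminimalBraidIsBounded} is met, and the conclusion applies.

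I do not anticipate a genuine obstacle here — the corollary is essentially a repackaging of the main theorem once the order hypothesis is converted into an $\fdtc$ estimate. The only mildly delicate step is the inequality $\lfloor\beta^k\rfloor\geq nk$ from $\lfloor\beta\rfloor\geq n$: this is not literally additivity of the floor (which fails, as the floor is only a quasimorphism), but it follows from the fact that $\Delta^{2n}\preceq\beta$ implies, after multiplying $k$ copies and using left-invariance together with centrality of $\Delta^{2}$, that $(\Delta^{2n})^k=\Delta^{2nk}\preceq\beta^k$, and $\Delta^{2nk}\preceq\beta^k$ is exactly $\lfloor\beta^k\rfloor\geq nk$ by the definition of the floor. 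Alternatively one can bypass the floor entirely and cite Proposition~\ref{fdtcproperties}(c) together with monotonicity of $\fdtc$ under $\preceq$ on powers of the full twist, which gives $\fdtc(\beta)\geq\fdtc(\Delta^{2n})=n$ directly; I would present whichever route is shortest, and then the corollary is immediate.
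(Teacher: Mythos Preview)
Your proposal is correct and follows essentially the same route as the paper: both convert $\Delta^{2n}\preceq\beta$ into $\Delta^{2nk}\preceq\beta^k$ (via left-invariance and centrality of $\Delta^2$), read off $\fdtc(\beta)\geq n$, handle the negative case symmetrically, and then invoke Theorem~\ref{thmintro:FDTCofNonminimalBraidIsBounded}. Your extra remark about stabilizing an $m$-braid with $m<n-1$ up to $n-1$ strands is fine but unnecessary, since Theorem~\ref{thmintro:FDTCofNonminimalBraidIsBounded} already asserts that $\beta$ realizes the braid index of its closure.
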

\begin{proof}
If $\Delta^{2n}\preceq\beta$, then $\Delta^{2nk}\preceq\beta^k$ for all positive integers $k$, and, thus, $\fdtc(\beta)\geq n>n-1$.
Similarly, if $\beta\preceq\Delta^{-2n}$, then $\beta^k\preceq\Delta^{-2nk}\prec\Delta^{(-2n)k+2}$ for all positive integers $k$, and, thus,
$\fdtc(\beta)\leq -n<-(n-1)$. Consequently, the corollary follows from Theorem~\ref{thmintro:FDTCofNonminimalBraidIsBounded}.
\end{proof}

\section{Examples and Optimality}\label{sec:Examples}

The following example shows that Theorem \ref{thmintro:FDTCofNonminimalBraidIsBounded} is (very close to) optimal.

\begin{Example}\label{optimalityexample}

For positive integers $n,m\geq 2$, let $\beta_{n,m}$ be the $n$-braid $(\delta\delta^\Delta)^{m-1}\delta$, where
\[\delta=a_1a_2\cdots a_{n-1}\et\delta^\Delta=a_{n-1}a_{n-2}\cdots a_1.\] We calculate below that $\fdtc(\beta_{n,m})=m-1$. (Note that this should intuitively be clear, since in $\beta_{n,m}$ the first strand is wrapping $m-1$ times around the rest.)

It was observed in~\cite{MalyutinNetsvetaev_03} that
the closures of $\beta_{n,m}$ and $\beta_{m,n}$ are isotopic (briefly, their observation was that these are the same link with respect to different braid axes: see Figure 2 in ~\cite{MalyutinNetsvetaev_03}); thus, when $n>m$, we have that $\beta_{n,m}$ does not realize the braid index of its closure. In particular, $\beta_{n,n-1}$ is an $n$-braid with fractional Dehn twist coefficient $n-2$ that does not realize the braid index of its closure. On the other hand, Theorem~\ref{thmintro:FDTCofNonminimalBraidIsBounded} implies that, if $n<m$, then $\beta_{n,m}$ does realize the braid index of its closure. This leaves the question whether $\beta_{n,n}$ realizes the braid index of its closure. It turns out that this is the case; see Proposition~\ref{prop:betanm} below.

To show that $\fdtc(\beta_{n,m})=m-1$,  we rewrite $\beta_{n,m}$ as follows: \[\beta_{n,m}=\Delta^{2m-2}\Delta_{2,\cdots,n}^{-2m+2}\delta,\]
where by $\Delta^2_{2,\cdots,n}$ we mean the $n$-braid $(a_{2}\cdots a_{n-1})^{n-1}$ (that is, it is the full twist on the last $n-1$ strands).
Similarly, we denote by $\Delta^2_{1,\cdots,n-1}$ the $n$-braid $(a_{1}\cdots a_{n-2})^{n-1}$ (that is, it is the full twist on the first $n-1$ strands), and note that $\Delta^{2l}_{2,\cdots,n}\delta=\delta\Delta^{2l}_{1,\cdots,n-1}$ for all integers $l$. For all positive integers $k$, we calculate \begin{align*}(\Delta^{(2m-2)k})^{-1}\beta_{n,m}^k&=(\Delta_{2,\cdots,n}^{-2m-2}\delta)^k\succ1\et\\
(\Delta^{(2m-2)k+2})^{-1}\beta_{n,m}^k&=\Delta^{-2}(\Delta_{2,\cdots,n}^{-2m-2}\delta)^k
\\&=\Delta^{-2}(\delta\Delta_{1,\cdots,n-1}^{-2m-2})^{k}
\\&=\Delta^{-2}\delta(\Delta_{1,\cdots,n-1}^{-2m-2}\delta)^{k-1}\Delta_{1,\cdots,n-1}^{-2m-2}
\\&\prec 1,\end{align*}
where in the last line we use that $\Delta^{-2}\delta$ and $\Delta_{1,\cdots,n-1}^{-2m-2}\delta$ can be written as a braid word containing $a_1^{-1}$ but no $a_1$. Consequently, we have \[\Delta^{2(m-1)k+2}\succ\beta_{n,m}^k\succ\Delta^{2(m-1)k}\]
for all positive integers $k$ and, thus, $\fdtc(\beta_{n,m})=m-1$.


\end{Example}
We remind the reader that a braid is called \emph{positive} if it can be given as a word in which only generators $a_i$ (but no $a_i^{-1}$) feature. Similarly, a braid is called \emph{quasipositive} if it can be written as a word in conjugates of generators $a_i$. A knot is called \emph{quasipositive} if it arises as the closure of a quasipositive braid.
\begin{prop}\label{prop:betanm}
If a knot $K$ is the closure of an $n$-braid of the form
 \[\alpha_1\delta\beta_1\delta^\Delta\alpha_2\delta\beta_2\cdots\alpha_{n-1}\delta\beta_{n-1}\delta^\Delta\alpha_{n}\delta\beta_{n},\]
      where the $\alpha_j$ and $\beta_j$ are (possibly trivial) quasipositive $n$-braids,
then $K$ has braid index $n$.

In fact, any quasipositive knot $K'$ (more generally, knot $K'$ that is the closure of a braid on which the slice-Bennequin inequality is sharp) concordant to $K$ has braid index at least $n$.
\end{prop}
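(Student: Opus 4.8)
The plan is to pin down the writhe and the fractional Dehn twist coefficient of $\beta:=\alpha_1\delta\beta_1\delta^\Delta\alpha_2\delta\beta_2\cdots\alpha_{n-1}\delta\beta_{n-1}\delta^\Delta\alpha_n\delta\beta_n$, and then to run the comparison underlying the proof of Theorem~\ref{thmintro:FDTCofNonminimalBraidIsBounded} with enough extra control to rule out the borderline value. First, $\beta$ is quasipositive: $\delta$ and $\delta^\Delta$ are positive braid words, the $\alpha_j$ and $\beta_j$ are quasipositive, and a product of quasipositive braids is quasipositive. Hence, by Rudolph's slice-Bennequin theorem, the slice-Bennequin inequality is sharp for $\beta$, so $\wr(\beta)=2g_4(K)+n-1$; since $\wr(\beta)\geq n(n-1)+(n-1)^2=(n-1)(2n-1)$ we get $g_4(K)\geq (n-1)^2>0$, so $K$, and every knot concordant to it, is not slice. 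Second, $\fdtc(\beta)\geq n-1$: for $\beta_{n,n}$ (all $\alpha_j,\beta_j$ trivial) this is the computation of Example~\ref{optimalityexample}, and inserting the quasipositive braids $\alpha_j,\beta_j$ does not decrease the Dehornoy floor of the powers of $\beta$ (equivalently, does not decrease $\fdtc$), so the same conclusion holds in general.

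I next reduce both assertions to a single one. The first assertion follows from the ``in fact'' assertion: $K=\widehat\beta$ is concordant to itself and, as noted, is the closure of a braid on which the slice-Bennequin inequality is sharp, so ``braid index at least $n$'' applies, while $\beta\in B_n$ gives ``braid index at most $n$''. For the ``in fact'' assertion, suppose $K'$ is concordant to $K$ and is the closure of some braid on which the slice-Bennequin inequality is sharp; let $b$ be the braid index of $K'$ and $\gamma_0$ a $b$-braid with $\widehat{\gamma_0}=K'$. The generalized Jones conjecture \cite[Theorem~9]{DynnikovPrasolov_13} applied to $\gamma_0$ and the given sharp representative, together with the slice-Bennequin inequality for $\gamma_0$ and concordance invariance of $g_4$, forces $\wr(\gamma_0)=2g_4(K')+b-1$, i.e.\ the slice-Bennequin inequality is sharp for $\gamma_0$ as well. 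Assume for contradiction $b\leq n-1$. Positively stabilizing $\gamma_0$ (which preserves the closure and $g_4$ and raises writhe and strand number each by one, hence preserves sharpness of the slice-Bennequin inequality) we obtain an $(n-1)$-braid $\gamma$ with $\widehat\gamma=K'$ on which the slice-Bennequin inequality is sharp, so that $\wr(\gamma)=2g_4(K')+(n-1)-1=2g_4(K)+n-2$ and therefore $\wr(\beta)-\wr(\gamma)=1$.

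Now combine Theorem~\ref{thmintro:FDTCviaUpsilon} for the $n$-braid $\beta$ and the $(n-1)$-braid $\gamma$ with Proposition~\ref{prop:Upsilon_alpha-Upsilon_beta} for the pair $\beta,\gamma$ (their closures are concordant), evaluated at $t=\tfrac{2}{n-1}$. By Lemma~\ref{lemma:HUislinearforsmallt} and Theorem~\ref{thmintro:FDTCviaUpsilon} one has $\HU_\gamma(\tfrac{2}{n-1})=-\tfrac{\wr(\gamma)}{n-1}$ and $\HU_\beta(\tfrac{2}{n-1})=-\tfrac{\wr(\beta)}{n-1}+\tfrac{2\fdtc(\beta)}{n-1}$, so Proposition~\ref{prop:Upsilon_alpha-Upsilon_beta}, after substituting $\wr(\beta)-\wr(\gamma)=1$, reads $\bigl|2\fdtc(\beta)-1\bigr|\leq (n-1)+(n-2)$, that is $\fdtc(\beta)\leq n-1$. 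Combined with the first paragraph this gives $\fdtc(\beta)=n-1$, and moreover forces the inequality in Proposition~\ref{prop:Upsilon_alpha-Upsilon_beta} to be an equality at $t=\tfrac{2}{n-1}$.

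It remains to exclude this rigidity, and this is the main obstacle. I expect to do so by unwinding the proof of Proposition~\ref{prop:Upsilon_alpha-Upsilon_beta} in Appendix~\ref{app:homogenizationofconcordancehomos}: equality at $t=\tfrac{2}{n-1}$ forces the cobordisms produced there between the relevant knots to realize the genus bound exactly and to decrease $\Upsilon$ at the maximal rate permitted by the genus/slope estimates, which---using that $\Upsilon_K=\Upsilon_{K'}$ is a concordance invariant and $g_4(K)>0$---should pin $\Upsilon_K$ on an interval just to the right of $\tfrac{2}{n-1}$ to the $\Upsilon$-function of a positive-braid closure whose Seifert genus is smaller than $g_4(K)$, contradicting the bound on the slopes of $\Upsilon_K$ by $g(K)$ from Proposition~\ref{prop:PropOfU}. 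An alternative route is to use that $\beta$ is of positive type to control the slope of $\HU_\beta$ just to the right of $\tfrac{2}{n-1}$ (from $|\HU_\beta(t)|\leq t\,\wr(\beta)/2$ and its value at $\tfrac{2}{n-1}$) and to play this against Theorem~\ref{thmintro:FDTCviaUpsilon} for $\gamma$ on $[\tfrac{2}{n-1},\tfrac{2}{n-2}]$ and Proposition~\ref{prop:Upsilon_alpha-Upsilon_beta} at $t$ slightly larger than $\tfrac{2}{n-1}$, aiming at a strict inequality there. Everything preceding this last step is routine given the quoted results; the case $n=2$ is immediate, since a $2$-braid of the stated form is a non-trivial positive $2$-braid, whose closure has braid index $2$ and is not slice, so no closure of a $1$-braid is concordant to it.
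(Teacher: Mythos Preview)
Your proposal has a genuine, self-acknowledged gap: after reducing to the borderline case $\fdtc(\beta)=n-1$ and equality in Proposition~\ref{prop:Upsilon_alpha-Upsilon_beta} at $t=\tfrac{2}{n-1}$, you do not actually exclude it; you only sketch two speculative routes. This is precisely the obstruction the paper flags in the Remark following Proposition~\ref{prop:betanm}: the $\HU$-based machinery of Theorem~\ref{thmintro:FDTCofNonminimalBraidIsBounded} cannot handle $\beta_{n,n}$ because $\fdtc(\beta_{n,n})=n-1$ sits exactly at the threshold. Your ``rigidity'' idea is unlikely to work as stated, since Example~\ref{Ex:optofPropofHI} exhibits braids for which the inequality of Proposition~\ref{prop:HI(alpha)-HI(beta)} is an equality; and your alternative route would require control of $\HU_\beta$ on $[\tfrac{2}{n-1},\tfrac{2}{n-2}]$, which the paper does not provide (indeed, Question~\ref{q:HUisPL} asks whether $\HU$ is even piecewise linear there). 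A secondary issue: your claim that inserting quasipositive factors $\alpha_j,\beta_j$ cannot decrease $\fdtc$ is asserted without proof. Malyutin's monotonicity (Lemma~5.2 of~\cite{Malyutin_Twistnumber}, used after Corollary~\ref{cor:upsilonrational}) concerns insertion of positive generators, and the Dehornoy order is only left-invariant, so extending this to conjugates $w a_i w^{-1}$ inserted mid-word needs an argument.

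The paper's proof takes a different route that sidesteps all of this: it works with $\Upsilon_K$ itself rather than $\HU$, and shows that the first singularity $t_0$ of $\Upsilon_K$ satisfies $t_0<\tfrac{2}{n-1}$, which by~\cite[Lemma~3.4 and Proposition~3.7]{FellerKrcatovich_16_OnCobBraidIndexAndUpsilon} gives braid index $\geq n$ for any concordant knot arising as the closure of a braid on which slice-Bennequin is sharp. The bound on $t_0$ comes from two explicit cobordisms (Claims~\ref{Claim:g4K-L} and~\ref{Claim:L=diffoftorusknots}): one of genus $g_4(K)-(n-1)^2$ from $K$ to $L=\widehat{\beta_{n,n}}$ obtained by deleting the quasipositive generators, and one of genus $n-2$ from $L$ to $T_{n,(n-1)n+1}\#(-T_{n-1,(n-1)(n-1)+1})$. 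Feeding the known torus-knot values of $\Upsilon$ (Proposition~\ref{prop:PropOfU}) through these cobordisms yields $\Upsilon_K(t)>-g_4(K)t$ for $t\in\bigl(\tfrac{2(n-1)}{(n-1)^2+1},\tfrac{2}{n-1}\bigr]$, forcing $t_0\leq\tfrac{2(n-1)}{(n-1)^2+1}<\tfrac{2}{n-1}$.
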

Here the slice-Bennequin inequality being sharp on a braid $n$-braid $\beta$ whose closure is a knot means
$\frac{\wr(\beta)-(n-1)}{2}=g_4\left(\widehat{\beta}\right)$. In particular, one has $g_4\left(\widehat{\beta}\right)=\tau\left(\widehat{\beta}\right)$ by~\cite[Corollary~11]{Livingston_Comp}, where $\tau$ denotes Ozsv\'ath and Szab\'o's concordance homomorphism introduced in~\cite{OzsvathSzabo_03_KFHandthefourballgenus}.
\begin{Remark} The proof of Proposition~\ref{prop:betanm} uses $\Upsilon$ rather than $\HU$. It is in spirit closer to~\cite{FellerKrcatovich_16_OnCobBraidIndexAndUpsilon} (in particular, to the proof of~\cite[ Theorem~1.3]{FellerKrcatovich_16_OnCobBraidIndexAndUpsilon}), where $\Upsilon$ was used to understand cobordism distance and braid index of positive braids and $\HU$ was only discussed to make connections to the signature clearer. In contrast, the main results in this article use $\HU$, which not only makes the connection to $\fdtc$ possible, but also allows for much shorter proofs (once the formal properties of $\HU$ are established) and treatment of links (rather than just knots). However, using $\HU$ comes at the cost of no longer being able to treat some examples; in particular $\beta_{n,n}$, which realizes the braid index of its closure by Proposition~\ref{prop:betanm}, but as $\omega=n-1$ this does not follow from Theorem~\ref{thmintro:FDTCofNonminimalBraidIsBounded}. \end{Remark}
\begin{proof}[Proof of Proposition~\ref{prop:betanm}]
We show that the first singularity $t_0>0$ of $\Upsilon_K$ is strictly smaller than $\frac{2}{n-1}$. This suffices since for quasipositive knots (or more generally knots that arise as the closure of braids on which the slice-Bennequin inequality is sharp) the braid index is bounded below by $\frac{2}{t_0}$; see~\cite[Lemma~3.4 and Proposition~3.7]{FellerKrcatovich_16_OnCobBraidIndexAndUpsilon}.

Let $g$ denote the smooth $4$-ball genus $g_4(K)=\tau(K)$ of $K$ and let $L$ denote the knot obtained as the closure of the $n$-braid $\beta_{n,n}=(\delta\delta^\Delta)^{n-1}\delta$.
For all knots, the function $\Upsilon$ equals $-\tau t$ for small enough $t$; see~\cite[Proposition~1.6]{OSS_2014}.
So, we know that $\Upsilon_K(t)=-gt$ for small $t$. We will show that $\Upsilon_K(t)>-gt$ for $\frac{2}{n-1}\geq t>\frac{2(n-1)}{(n-1)^2+1}$. From this we conclude that $t_0$ is in $(0,\frac{2(n-1)}{(n-1)^2+1}]\subset(0,\frac{2}{n-1})$.

To show that the first singularity $t_0>0$ of $\Upsilon_K$ is strictly smaller than $\frac{2}{n-1}$ we use concordance properties of $\Upsilon$ established in~\cite{OSS_2014} and the following two `short' cobordisms:

\begin{Claim}\label{Claim:g4K-L}
There exists a cobordism of genus $g_4(K)-g_4(L)=g-(n-1)^2$ between $K$ and $L$. In other words,
$g_4(K\#(-L))=g-(n-1)^2$.
\end{Claim}

\begin{Claim}\label{Claim:L=diffoftorusknots}
There exists a cobordism of genus $n-2$ between \[L\et T_{n,(n-1)n+1}\#(-T_{n-1,(n-1)(n-1)+1}).\]
\end{Claim}

We postpone the proof of these claims to the end of Appendix~\ref{app:homogenizationofconcordancehomos} as they use similar ideas as the proofs there. 

Fix $t\in [\frac{2}{n},\frac{2}{n-1}]$.
Using the value of $\Upsilon_{T_{n,(n-1)n+1}}(t)$ and $\Upsilon_{T_{n-1,(n-1)(n-1)+1}}(t)$ provided in Proposition~\ref{prop:PropOfU}, we bound $\Upsilon_K$ from below as follows.

\begin{align*}
  \Upsilon_K(t)&\geq& \Upsilon_L(t)-t(g-(n-1)^2)&\\
  & \geq& \Upsilon_{T_{n,(n-1)n+1}\#(-T_{n-1,(n-1)(n-1)+1})}(t)-t(g-(n-1)^2)-t(n-2)&\\
  &=&\Upsilon_{T_{n,(n-1)n+1}}-\Upsilon_{T_{n-1,(n-1)(n-1)+1}}(t)-t(g-(n-1)^2)-t(n-2)&\\
  &=&-t(n-1)\frac{n(n-1)}{2}+(n-1)n(t-\frac{2}{n})\\&&+t(n-1)\frac{(n-1)(n-2)}{2}-t(g-(n-1)^2)-t(n-2)&\\
  &=&-t\left((n-1)\frac{n(n-1)}{2}-(n-1)\frac{(n-1)(n-2)}{2}+g-(n-1)^2\right)\\&&+(n-1)n(t-\frac{2}{n})-t(n-2)&\\
  &=&-tg+(n-1)n(t-\frac{2}{n})-t(n-2)&\\
  &=&-tg+t((n-1)n-(n-2))-(n-1)n\frac{2}{n})&\\
  &=&-tg+t((n-1)^2+1)-2(n-1),&
  \end{align*}
  where we used Claim~\ref{Claim:g4K-L} and Claim~\ref{Claim:L=diffoftorusknots} in the first and second line, respectively.
  This concludes the proof since $t((n-1)^2+1)-2(n-1)>0$ for $t>\frac{2(n-1)}{(n-1)^2+1}$ and so
  $\Upsilon_K(t)>-tg$.

\end{proof}

We now observe that the bound in Theorem~\ref{thmintro:FDTCofNonminimalBraidIsBounded} is larger than necessary for $3$-braids. This leads us to ask Question~\ref{q:n-2} in Section~\ref{sec:questions}.

\begin{prop}\label{prop:n-2} Any $3$-braid $\beta$ such that $|\omega(\beta)| > 1 = 3-2 = n-2$ realizes the braid index of its closure.
\end{prop}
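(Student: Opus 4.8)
The plan is to follow the same strategy as the proof of Theorem~\ref{thmintro:FDTCofNonminimalBraidIsBounded} but to exploit the special structure of $3$-braids to save one unit in the bound. Suppose $\beta$ is a $3$-braid with $|\fdtc(\beta)|>1$ whose closure is also the closure of some $m$-braid $\alpha$ with $m\leq 2$ (the braid index). The key observation is that $2$-braids are very restrictive: a $2$-braid is of the form $a_1^{k}$, and for such a braid the generalized Jones Conjecture gives $|\wr(\beta)-\wr(\alpha)|\leq 3-2=1$, exactly as before. So the writhe comparison alone does not improve. The improvement must instead come from a better bound on $\HU_\alpha(t)$ for $2$-braids, or a better version of Proposition~\ref{prop:Upsilon_alpha-Upsilon_beta} in this low-complexity situation.

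First I would compute $\HU_\alpha(t)$ explicitly for $m\leq 2$. For $m=1$, $\alpha$ is trivial and $\HU_\alpha\equiv 0$, with $\wr(\alpha)=0$. For $m=2$, $\alpha=a_1^{k}$, and by Lemma~\ref{lemma:HUislinearforsmallt} we have $\HU_\alpha(t)=-t\frac{k}{2}$ on all of $[0,1]$ (since $\frac{2}{m}=1$), i.e.~$\HU_\alpha(t)=-t\frac{\wr(\alpha)}{2}$ on the entire interval. Thus in both cases $\HU_\alpha$ is \emph{linear} on $[0,1]$ with slope $-\frac{\wr(\alpha)}{2}$ and no singularity at $\frac{2}{3}$. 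Meanwhile, by Theorem~\ref{thmintro:FDTCviaUpsilon}, on $[\frac{2}{3},1]$ we have $\HU_\beta(t)=-t\frac{\wr(\beta)}{2}+3\fdtc(\beta)(t-\frac{2}{3})$. The point is now that Proposition~\ref{prop:Upsilon_alpha-Upsilon_beta} bounds $|\HU_\beta(t)-\HU_\alpha(t)|$ by $t\frac{n-1+m-1}{2}$, which for $n=3,m\leq 2$ is at most $t\cdot\frac{3}{2}\cdot\frac12=$ wait — it is $t\frac{2+1}{2}=\frac{3t}{2}$ for $m=2$ and $t\frac{2+0}{2}=t$ for $m=1$. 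I would then take $t=1$ (the right endpoint, legitimate since $\frac{2}{n-1}=1$ when $n=3$) and combine with $|\wr(\beta)-\wr(\alpha)|\leq 3-m$ from the generalized Jones Conjecture. Evaluating at $t=1$: $|{-\frac{\wr(\beta)}{2}}+3\fdtc(\beta)\cdot\frac13+\frac{\wr(\alpha)}{2}|\leq\frac{m+1}{2}$, i.e.~$|{\fdtc(\beta)}-\frac{\wr(\beta)-\wr(\alpha)}{2}|\leq\frac{m+1}{2}$, giving $2|\fdtc(\beta)|\leq|\wr(\beta)-\wr(\alpha)|+m+1\leq(3-m)+m+1=4$, hence $|\fdtc(\beta)|\leq2$ — which recovers only $n-1=2$, not $n-2=1$.

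So the crude substitution does not suffice, and the real content of the proof must be a sharper estimate when $m\leq 2$. The natural fix is to \emph{not} pass through the generic Proposition~\ref{prop:Upsilon_alpha-Upsilon_beta} but to bound $\HU_\beta(1)-\HU_\alpha(1)$ directly using that $\alpha$ and $\beta$ have the same closure together with concordance properties of $\Upsilon$ specialized to the genus bound for $2$-braid closures. Concretely, I would argue: the closure $L$ of $a_1^k$ is a $(2,k)$-torus link (or knot), and $\Upsilon_{\widehat{\beta^j\varepsilon}}$ can be compared to $\Upsilon$ of a connected sum built from $L$ via an explicit low-genus cobordism coming from destabilizing $\beta$ down to the $2$-braid $\alpha$ (Markov moves, using that a single destabilization changes things in a controlled way); one destabilization contributes a genus-$0$ or genus-$\frac12$-type band. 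The key quantitative input would be that $\Upsilon_{a_1^k}$-closure has its first singularity at $t=1$ (if at all), equivalently that $\HU$ of a $2$-braid stays linear all the way to $t=1$, so the "slope-change budget" available to $\beta$ at $\frac23$ is severely constrained. The main obstacle I anticipate is making this cobordism/destabilization argument genuinely tight — i.e.~producing a cobordism between $\widehat{\beta}$ and $\widehat{\alpha}$ of genus at most something like $\frac{n-m}{2}=\frac12$ rather than the generic $\frac{n-1+m-1}{2}$ — and verifying that plugging the improved genus into the $\Upsilon$ slice-genus inequality $|\Upsilon_K(t)|\leq tg_4(K)$ at $t=1$ yields $2|\fdtc(\beta)|\leq 2$. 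If such a tight cobordism is not available in general, the alternative is to treat $3$-braids case by case via the Murasugi/Birman--Menasco classification of $3$-braids up to conjugacy (every $3$-braid is conjugate to $\Delta^{2d}$ times a word of one of a few explicit forms), compute $\fdtc$ and braid index directly in each family, and check the claim; this classification route is more elementary and avoids the delicate cobordism estimate, at the cost of a longer case analysis.
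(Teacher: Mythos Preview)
Your exploration correctly shows that the $\HU$-based strategy of Theorem~\ref{thmintro:FDTCofNonminimalBraidIsBounded} cannot be sharpened to yield $|\fdtc(\beta)|\leq 1$: plugging in $n=3$ and the best available $m\leq 2$ still only gives $|\fdtc(\beta)|\leq 2$. Your proposed fix of finding a tighter cobordism between $\widehat{\beta}$ and $\widehat{\alpha}$ is not the route the paper takes, and it is unclear that such a cobordism of the required genus exists in general.

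The paper instead takes exactly your ``alternative'' route---classification---but in a cleaner and shorter form than the full Murasugi conjugacy classification you mention. The key input is the Birman--Menasco classification~\cite{BirmanMenasco_StudyingLinksViaClosedBraidsIII} of precisely those $3$-braids whose closures have braid index at most $2$: up to conjugacy, these are $a_1a_2$, $a_1^{-1}a_2^{-1}$, $a_1a_2^{-1}$, $a_1^{k}a_2$, and $a_1^{k}a_2^{-1}$. Each of these words contains at most one positive and at most one negative power of $a_2$. A direct consequence of the quasimorphism properties of $\fdtc$ (Malyutin's Proposition~13.1 in~\cite{Malyutin_Twistnumber}, which the paper records after Proposition~\ref{fdtcproperties}) is that if a braid word has $r$ occurrences of $a_i$ and $s$ of $a_i^{-1}$ for some fixed $i$, then $-s\leq\fdtc\leq r$. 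Applying this with $i=2$ gives $|\fdtc|\leq 1$ for every braid on the list, and conjugacy invariance of $\fdtc$ finishes the proof. There is no case analysis beyond reading off the list; the argument is four lines and entirely bypasses $\HU$.
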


\begin{proof}

We prove the contrapositive. Consider a $3$-braid $\beta$ such that the closure of $\beta$ admits a braid representative of strand number one or two. By the classification of $3$-braids in~\cite{BirmanMenasco_StudyingLinksViaClosedBraidsIII}, $\beta$ is conjugate to either $a_{1}a_{2}, a_{1}^{-1}a_{2}^{-1}, a_{1}a_{2}^{-1}$ (if it is a representative of the unknot) or $a_{1}^{k}a_{2}$ or $a_{1}^{k}a_{2}^{-1}$ for $k \in \mathbb{Z}$ (if it is a representative of a $(2,k)$ torus knot or link). One consequence of the properties listed in Proposition~\ref{fdtcproperties} (see~\cite{Malyutin_Twistnumber}, Proposition 13.1) is that if a braid $\alpha \in B_{n}$ is represented by a word containing precisely $r$ occurrences of the generator $a_{i}$ and $s$ occurrences of the generator $a_{i}^{-1}$ for $i \in \{1, \ldots, n-1\}$, then
$$ -s \leq \omega(\alpha) \leq r$$

Notice that each of the braid words listed above contains at most one negative power and at most one positive power of $a_{2}$, and hence $-1$ is a lower bound and $1$ is an upper bound for each of their fractional Dehn twist coefficients. The fractional Dehn twist coefficient is invariant under conjugation, and so this implies that $|\omega(\beta)| \leq 1$.

\end{proof}

We finish this section by showing that Theorem \ref{thmintro:FDTCofNonminimalBraidIsBounded} determines the braid index in infinitely many cases where the Morton-Franks-Williams inequality (\cite{Franks_Williams_87_BraidsAndTheJonesPolynomial}, \cite{Morton_SeifertCircles}, \cite{Morton_PolynomialsFromBraids}) is not sharp. Elrifai in \cite{Elrifai_thesis} (see also~\cite{Kawamuro_KR_MFW}) proved that for all knots and links of braid index three, the Morton-Franks-Williams inequality is sharp except for the families of knots and links which are closures of
$$ K_{k} = (a_{1}a_{2}a_{2}a_{1})^{2k} a_{1}a_{2}^{-2k-1}$$
and
$$ L_{k} = (a_{1}a_{2}a_{2}a_{1})^{2k+1} a_{1}a_{2}^{-2k+1}$$
for $k$ a positive integer.

\begin{Example}\label{Ex:elrifai_examples} The families of $3$-braids $K_{k}$ and $L_{k}$ (for $k \geq 2$ and $k \geq 1$, respectively) have fractional Dehn twist coefficients strictly larger than two (and hence have braid index three by Theorem \ref{thmintro:FDTCofNonminimalBraidIsBounded}).

To see this,
we rewrite $K_{k}$ as
$$K_{k} = (a_{1}a_{2}a_{2}a_{1})^{2k} a_{1}a_{2}^{-2k-1} = (\Delta^{2} a_{2}^{-2})^{2k} a_{1}a_{2}^{-2k-1} = (\Delta^{2})^{2k}a_{2}^{-4k}a_{1}a_{2}^{-2k-1}$$
and similarly $L_{k}$ as
$$L_{k} = (a_{1}a_{2}a_{2}a_{1})^{2k+1} a_{1}a_{2}^{-2k+1} = (\Delta^{2})^{2k+1}a_{2}^{-4k-2}a_{1}a_{2}^{-2k+1}.$$
Again using Proposition 13.1 from \cite{Malyutin_Twistnumber}, we see that $\omega(a_{2}^{-4k}a_{1}a_{2}^{-2k-1}) = 0 = \omega(a_{2}^{-4k-2}a_{1}a_{2}^{-2k+1})$ as each has only positive powers of $a_{1}$ and only negative powers of $a_{2}$. Finally, by Property (c) from Proposition \ref{fdtcproperties}, we can conclude that $\omega(K_{k}) = 2k$ and $\omega(L_{k}) = 2k+1$.
\end{Example}

\section{Questions}\label{sec:questions}
  By Theorem~\ref{thmintro:FDTCviaUpsilon}, we know more about $\HU$ than that it is Lipschitz continous (Proposition~\ref{prop:HUisLipschitz}): $\HU$ is piecewise linear with rational slopes on $[0,\frac{2}{n-1}]$ by Theorem~\ref{thmintro:FDTCviaUpsilon} and Corollary~\ref{cor:upsilonrational}. This brings us to ask:

\begin{question}\label{q:HUisPL}
Is $\HU$ piecewise linear for all $n$-braids and are all the slopes rational?
\end{question}
We remark that 
a positive answer to the following question about the $\Upsilon$-invariant of closures of braids of a fixed number of strands would imply that $\HU$ is always piecewise linear.
\begin{question}Fix an integer $n\geq 2$.
Given a knot $K$ that arises as the closure of an $n$-braid, denote by $S$ the subset of $[0,1]$ on which the piecewise-linear function $\Upsilon_K(t)$ is not smooth. Is $S$ contained in
\[\left\{\frac{2p}{q}\;|\;\text{where $p$ and $q$ are positive integers such that }|q|\leq n\right\}?\]
\end{question}

Finally, Proposition~\ref{prop:n-2} motivates the following:

\begin{Question}\label{q:n-2} Is it true that for any $n$-braid $\beta$ such that $|\omega(\beta)| > n-2$, $\beta$ realizes the braid index of its closure?
\end{Question}

Example~\ref{optimalityexample} shows that this would be the lowest possible bound for $\omega(\beta)$, as $\beta_{n,n-1}$ is an $n$-braid that does not realize the braid index of its closure and $\omega(\beta_{n,n-1}) = n-2$. Note that just as Theorem~\ref{thmintro:FDTCofNonminimalBraidIsBounded} implied Corollary~\ref{corintro:ConjMalyutinNetsetaev}, a positive answer to Question \ref{q:n-2} would imply that if an $n$-braid $\beta$ satisfies $\Delta^{2(n-1)}\preceq\beta$ or $\beta\preceq\Delta^{-2(n-1)}$, then the closure of $\beta$ does not arise as the closure of a braid on $n-1$ or fewer strands.

\appendix
\section{Homogenization of concordance homomorphisms}\label{app:homogenizationofconcordancehomos}
In this section, we establish some basic properties of the homogenizations of concordance homomorphisms. 
These properties seem to have not been established in the literature so far, although some have been claimed without proof in~\cite{FellerKrcatovich_16_OnCobBraidIndexAndUpsilon} and so we provide proofs for completeness.
Our proofs are in spirit close to the constructions of Baader~\cite{Baader_07_AsymptRasmussenInv} and Brandenbursky~\cite{Brandenbursky_11}. More concretely, the proofs of Lemma~\ref{lemma:PropofHI} and Proposition~\ref{prop:HI(alpha)-HI(beta)} are based on the following fundamental observation: given two $n$-braids $\alpha$ and $\beta$, the closure of $\alpha\beta$ and the connected sum of the closures of $\alpha$ and $\beta$ are related by a connected cobordism of Euler characteristic $n-1$. The proof of Lemma~\ref{lemma:HIwhenSBIissharp} is a variation of Rudolph's proof for the slice-Bennequin inequality given in~\cite[Lemma~4]{rudolph_QPasobstruction}.

In the entire section, $I$ denotes a knot invariant that 
descends to a homomorphism $I\colon \mathcal{C}\to \R$ with $|I(K)|\leq t_I g_4(K)$ for all knots $K$ and some real constant~$t_I$. Here $\mathcal{C}$ denotes the \emph{concordance group}---knots up to concordance with group operation given by connected sum (denoted by $\#$); in particular, for every knot $K$, the knot given as the mirror image of $K$ with reversed orientation (denoted by $-K$) represents the class of the inverse of the class of $K$.

Fix a positive integer $n$ and, for every $n$-braid $\beta$, choose an $n$-braid $\varepsilon_{\beta}$ of bounded (independent of $\beta$) length such that the closure of $\beta\varepsilon_{\beta}$ is a knot. In fact, $\varepsilon_{\beta}$ can be chosen to be of length equal to the number of components of the closure of $\beta$; in particular, of length at most $n-1$.
Brandenbursky~\cite{Brandenbursky_11} showed that there is a well-defined (independent of the choices for $\varepsilon_{\beta}$) map
\[\HI\colon B_n\to \R, \beta\mapsto \lim_{k\to\infty}\frac{I\left(\widehat{\beta^k\varepsilon_{\beta^k}}\right)}{k},\]
called the \emph{homogenization} of $I$.
In fact, $\HI$ is a homogeneous quasimorphism.
Here a \emph{quasimorphism} on a group $G$ is any map $\phi\colon G\to \R$ such that \[\sup_{(a,b)\in G\times G}|\phi(ab)-\phi(a)-\phi(b)|<\infty\] and a quasimorphism $\phi\colon G\to \R$ is called \emph{homogeneous} if $\phi(a^k)=k\phi(a)$ for all integers $k$ and all $a\in G$.
Homogeneity of $\HI$ is immediate from the construction. All homogeneous quasimorphisms are constant on conjugacy classes. We summarize properties specific to $\HI$ that get used in the main part of this text.

\begin{lemma}\label{lemma:PropofHI}
\begin{enumerate}[I)]
  \item\label{item:HI-I} If a knot $K$ is the closure of an $n$-braid $\beta$, then $|\HI(\beta)-I(K)|\leq t_I\frac{n-1}{2}$.
  \item\label{item:HI(betaa_i)}
  For all $n$-braids $\beta$, $\left|\HI(\beta a_i^{\pm1})-\HI(\beta)\right|\leq \frac{t_I}{2}$.

  \item\label{item:HIhasDefectt_I(n-1)} For all $n$-braids $\alpha$ and $\beta$, $\left|\HI({\alpha\beta})-\HI(\alpha)-\HI(\beta)\right|\leq t_I(n-1)$. If $\alpha$ and $\beta$ commute; for example, if $\alpha$ is the $n$-stranded full twist $\Delta^2$ or a power of $\beta$, then
      $\HI({\alpha\beta})=\HI(\alpha)+\HI(\beta)$.
  \item \label{item:HIofUnions}Fix positive integers $n$, $n_1$, $\cdots$, $n_l$ such that $n=\sum_{i=1}^l n_i$. If an $n$-braid $\beta$ is given as the disjoint union of braids $\beta_1$, $\cdots$ ,$\beta_l$ on $n_1$, $\cdots$, $n_l$ strands, respectively, then $\HI(\beta)=\sum_{i=1}^l \HI({\beta_i})$.
\end{enumerate}
\end{lemma}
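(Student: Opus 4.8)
The whole lemma follows from one geometric observation already advertised in the text: for two $n$-braids $\alpha$ and $\beta$, the closure $\widehat{\alpha\beta}$ and the connected sum $\widehat{\alpha}\#\widehat{\beta}$ are related by a connected cobordism of Euler characteristic $n-1$ (split off a trivial $n$-braid between $\alpha$ and $\beta$ and merge the resulting $n$ parallel strands by $n-1$ saddles). This gives $g_4$-type inequalities that, after passing to powers and dividing by $k$, become the sharp statements claimed. I would organize the proof around first recording this cobordism bound, then deducing each part.

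First I would prove \ref{item:HI-I}. Write $K=\widehat{\beta}$ with $\beta\in B_n$. Applying $\varepsilon_{\beta^k}$ and noting $\widehat{\beta^k\varepsilon_{\beta^k}}$ and $\widehat{\beta}^{\,\#k}$ cobound a surface built from the above cobordisms (plus the short $\varepsilon$-part), one gets $g_4(\widehat{\beta^k\varepsilon_{\beta^k}}\#(-K^{\#k}))\le c\cdot k^{0}$-ish — more precisely the genus grows like a constant independent of $k$ coming from the bounded length of $\varepsilon_{\beta^k}$ and the $(n-1)$ saddles per concatenation, so it is $O(k)$ with the right linear coefficient; dividing the inequality $|I(\widehat{\beta^k\varepsilon_{\beta^k}})-kI(K)|\le t_I g_4(\cdots)$ by $k$ and taking $k\to\infty$ gives $|\HI(\beta)-I(K)|\le t_I\frac{n-1}{2}$. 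Next, \ref{item:HIhasDefectt_I(n-1)}: the same cobordism of Euler characteristic $n-1$ between $\widehat{\alpha^k\beta^k\varepsilon}$ (comparing with $\widehat{\alpha^k\varepsilon}\#\widehat{\beta^k\varepsilon}$, and being careful with the $\varepsilon$-words) yields $|I(\widehat{(\alpha\beta)^k\varepsilon})-I(\widehat{\alpha^k\varepsilon})-I(\widehat{\beta^k\varepsilon})|\le t_I(n-1)k + O(1)$; dividing by $k$ and taking limits gives the quasimorphism bound $|\HI(\alpha\beta)-\HI(\alpha)-\HI(\beta)|\le t_I(n-1)$. (Here one must only know that $\HI$ as a limit exists, which is Brandenbursky's result quoted above, and that $(\alpha\beta)^k$ differs from $\alpha^k\beta^k$ by commutators handled by the cobordism; care is needed because $(\alpha\beta)^k\ne \alpha^k\beta^k$ in general, so one instead compares $\widehat{(\alpha\beta)^k}$ directly with $\widehat{\alpha^{k}}\#\widehat{\beta^{k}}$ via a single cobordism of Euler characteristic $n-1$, which is fine.) When $\alpha,\beta$ commute one argues $\HI(\alpha^k\beta^k)=\HI((\alpha\beta)^k)=k\HI(\alpha\beta)$ and also $\HI(\alpha^k\beta^k)$ is sandwiched using the defect bound applied to $\alpha^k$ and $\beta^k$, divided by $k$, forcing equality $\HI(\alpha\beta)=\HI(\alpha)+\HI(\beta)$.

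Part \ref{item:HI(betaa_i)} is the case of \ref{item:HIhasDefectt_I(n-1)} where one of the braids is a single generator, but the bound $t_I(n-1)$ is too weak; instead I would use that $\widehat{\beta a_i^{\pm1}}$ and $\widehat{\beta}$ differ by a single cobordism of Euler characteristic $1$ (one band), so $|I(\widehat{\beta^k(a_i^{\pm1})^k\cdots})-\cdots|$ — more cleanly, $\widehat{\beta^k a_i^{\pm k}\,\varepsilon}$ and $\widehat{\beta^k\varepsilon'}$ cobound a genus-$O(1)$... here one needs $a_i^{\pm k}$ to be removed, which costs $k$ bands, giving exactly the coefficient $\frac{t_I}{2}$ after dividing by $k$. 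Equivalently, invoke that $\HI$ is a homogeneous quasimorphism together with the single-band bound $|I(\widehat{\gamma a_i^{\pm1}})-I(\widehat\gamma)|\le \frac{t_I}{2}$ applied along the telescoping word for $(\beta a_i^{\pm1})^k$ versus $\beta^k$, whose difference is $k$ generators, and divide by $k$. Finally \ref{item:HIofUnions}: a disjoint union closes up to a connected sum $\widehat\beta=\#_i\widehat{\beta_i}$ on the nose (no cobordism needed), so $I(\widehat{\beta^k\varepsilon})$ splits as $\sum_i I(\widehat{\beta_i^k\varepsilon_i})$ up to the bounded connecting/$\varepsilon$ corrections, and dividing by $k$ gives additivity. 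The main obstacle is purely bookkeeping: making the Euler-characteristic/band counts precise and uniform in $k$ so that all error terms are genuinely $O(1)$ (hence vanish after dividing by $k$), and handling the $\varepsilon_{\beta^k}$-words — which depend on $k$ but have bounded length — without disturbing the limits; once the cobordism-of-Euler-characteristic-$(n-1)$ lemma is stated cleanly, every item is a two-line consequence.
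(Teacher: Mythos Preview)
Your overall strategy---band-move cobordisms, genus counts, divide by $k$, take limits---is exactly the paper's, and your treatments of \ref{item:HI-I} and \ref{item:HI(betaa_i)} are essentially correct once the bookkeeping is written out.

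There is, however, a genuine gap in your argument for \ref{item:HIhasDefectt_I(n-1)}. You assert that one ``compares $\widehat{(\alpha\beta)^k}$ directly with $\widehat{\alpha^{k}}\#\widehat{\beta^{k}}$ via a single cobordism of Euler characteristic $n-1$.'' No such $k$-independent cobordism exists in general: if it did, dividing by $k$ would force the defect of $\HI$ to vanish, making $\HI$ a genuine homomorphism on $B_n$---which it is not (for instance, $\HU(t)$ for $t\in(\tfrac{2}{n},\tfrac{2}{n-1})$ has nonzero defect since $\fdtc$ does, by Theorem~\ref{thmintro:FDTCviaUpsilon}). Your earlier inequality $|I(\widehat{(\alpha\beta)^k\varepsilon})-I(\widehat{\alpha^k\varepsilon})-I(\widehat{\beta^k\varepsilon})|\le t_I(n-1)k+O(1)$ is the right target, but it corresponds to a cobordism of genus $\sim(n-1)k$, not $\sim\tfrac{n-1}{2}$, and you have not said how to build it. The paper does this in two stages: starting from $\widehat{(\alpha\beta)^k\varepsilon}$, one cuts out each of the $k$ copies of $\beta$ by $n-1$ saddles apiece to reach $\widehat{\alpha^k\varepsilon_{\alpha^k}}\#k\widehat{\beta}$ (this already costs $\sim(n-1)k$ saddles, not $n-1$); then one reassembles $k\widehat{\beta}$ into $\widehat{\beta^k\varepsilon_{\beta^k}}$ by another $\sim(n-1)(k-1)$ saddles exactly as in Part~\ref{item:HI-I}. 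The concatenated cobordism has genus $(n-1)k+O(1)$, which after dividing by $k$ yields the defect bound $t_I(n-1)$. Your write-up oscillates between this correct count and the false ``Euler characteristic $n-1$'' claim; only the former is salvageable.

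A smaller point on \ref{item:HIofUnions}: the closure of a disjoint union of braids is a \emph{split link}, not a connected sum ``on the nose.'' The paper's route is cleaner: observe that the shifted blocks $\beta'_i$ commute in $B_n$, apply the commuting case of \ref{item:HIhasDefectt_I(n-1)} to get $\HI(\beta)=\sum_i\HI(\beta'_i)$, and then verify $\HI(\beta'_i)=\HI(\beta_i)$ by choosing $\varepsilon$'s so that the two closures are literally isotopic knots. Your direct approach can be made to work, but as stated it rests on an incorrect identification of links.
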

\begin{Remark}\label{Rmk:Brandenburskysproof}
Brandenbursky proved that $\HI$ is a homogeneous quasimorphism by showing that it arises as the homogenization of the quasimorphism given by $\beta\mapsto I\left(\widehat{\beta\varepsilon_{\beta}}\right)$, which is a quasimorphism of defect at most $3t_In$.\footnote{For this, we recall that the homogenization of a quasimorphism is well-defined, which can for example be seen by adding or subtracting the defect, turning the quasimorphism into an subadditive or superadditive function, and then applying Fekete's Lemma, which states the following: given a subadditive (superadditive) real-valued sequence $(a_k)_{k=1}^{\infty}$, the limit $\lim_{k\to\infty}\frac{a_k}{k}$ exists in $[-\infty,\infty)$ ($(-\infty,\infty]$)~\cite{Fekete_23}.} A priori, this only allows to conclude that $\HI$ is a homogeneous quasimorphism of defect at most $6t_In$ rather than $t_I(n-1)$. We do not know of an example of an $I$ that shows that the bound $t_I(n-1)$ is realized. However, we do provide examples that show that the other inequalities in Lemma~\ref{lemma:PropofHI} cannot be improved; see Example~\ref{Ex:optofPropofHI}.

\end{Remark}
For the proofs we will build cobordisms between closures of braids and then apply the fact that if there is a cobordism of genus $g$ between two knots $K_1$ and $K_2$, then
\begin{equation}\label{eq:I(K)-I(L)<tg}|I(K_1)-I(K_2)|\leq t_I g,\end{equation}
since $I(K_1)-I(K_2)=I(K_1\# -K_2)$ and $g_4(K_1\# -K_2)\leq g$. Here a cobordism $C$ between two links $L_0$ and $L_1$ is a smooth oriented surface in $S^3\times [0,1]$ such that $\partial C=L_0\times\{0\} \cup L_1\times\{1\}$. 
\begin{proof}[Proof of Lemma~\ref{lemma:PropofHI}
]
\ref{item:HI-I}):
For every fixed $k$,
we claim that there exists a cobordism of genus $\frac{(n-1)(k-1)+\l}{2}$ between $kK$ and $\widehat{\beta^k\varepsilon_{\beta^k}}$,
where $kK$ denotes the $k$-fold connect sum $K\# \cdots \# K$ and $\varepsilon_{\beta^k}$ is an $n$-braid of length $\l$ at most $n-1$ such that the closure of $\beta^k\varepsilon_{\beta^k}$ is a knot.
In fact, there exists a cobordism $C$ between $kK$ and $\widehat{\beta^k\varepsilon_{\beta^k}}$ given by $(k-1)(n-1)+\l$ band moves; see Figure~\ref{fig:cob}.

\begin{figure}[h]
\centering
\includegraphics[width=12cm]{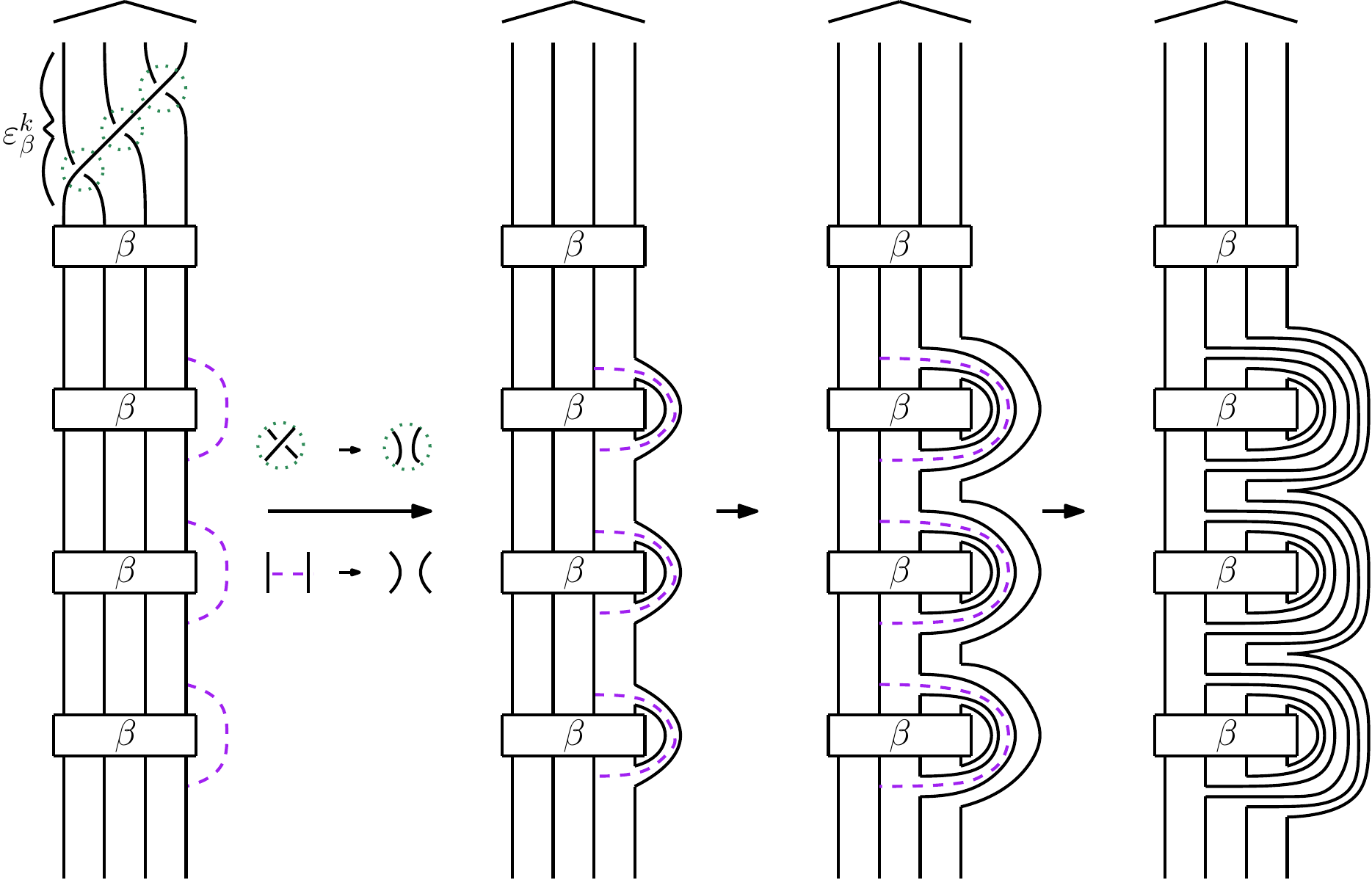}
\caption{Illustration of the
band moves (and crossing resolutions, which are also band moves) that yield the cobordism $C$ from $\widehat{\beta\varepsilon_{\beta^k}}$ to $kK$ for $n=4$, $k=4$, and $\epsilon_{\beta^k}=a_1a_2a_3$. }
\label{fig:cob}
\end{figure}

In particular, the cobordism $C$ has Euler characteristic $-(k-1)(n-1)-\l$, is connected, and has two boundary components; thus, its genus is
$\frac{(k-1)(n-1)+\l}{2}$.

We calculate
\begin{align*}\left|I\left(\widehat{\beta^k\varepsilon_k}\right)-kI(K)\right|
&=\left|I\left(\widehat{\beta^k\varepsilon_k}\right)-I(kK)\right|
\overset{\text{\eqref{eq:I(K)-I(L)<tg}}}{\leq} t_I\frac{(n-1)(k-1)+\l}{2}.\end{align*}
Dividing by $k$ and taking the limit $k\to\infty$ yields
\[\left|\widetilde{I}(\beta)-I(K)\right|=\left|\lim_{k\to\infty}\frac{I\left(\widehat{\beta^k\varepsilon_{\beta^k}}\right)}{k}-I(K)\right|
=\left|\lim_{k\to\infty}\frac{I\left(\widehat{\beta^k\varepsilon_{\beta^k}}\right)-kI(K)}{k}\right|\leq
\frac{t_I(n-1)}{2}.\]

\ref{item:HI(betaa_i)}): For every $k$, suppose $\epsilon_{\beta^k}$ and $\epsilon_{(\beta a_i^{\pm1})^k}$ are braids of length $\l\leq n-1$ and $\l'\leq n-1$, respectively, such that the closures of
$\beta^k\epsilon_{\beta^k}$ and $(\beta a_i^{\pm1})^k\epsilon_{(\beta a_i^{\pm1})^k}$ are knots. If braids $\alpha$ and $\alpha'$ differ by adding or removing a generator $a_i$ or $a_i^{-1}$, then their closures are related by a cobordism of Euler characteristic $-1$. Indeed, as discussed in Figure~\ref{fig:cob}, adding or removing a crossing can be realized by a cobordism consisting of one 1-handle.
So, since the braid $\beta^k\epsilon_{\beta^k}$ can be turned into the braid $(\beta a_i^{\pm1})^k\epsilon_{(\beta a_i^{\pm1})^k}$ by removing $\l$ + k generators and adding $\l'$ generators, there exists a cobordism of Euler characteristic $-\l-\l'-k$, i.e.~genus $\frac{\l+\l'+k}{2}$, between the knots given as the closure of $\beta^k\epsilon_{\beta^k}$ and $(\beta a_i^{\pm1})^k\epsilon_{(\beta a_i^{\pm1})^k}$. Consequently, we have \[\left|I\left(\widehat{(\beta a_i^{\pm1})^k\epsilon_{(\beta a_i^{\pm1})^k}}\right)-I\left(\widehat{\beta^k\epsilon_{\beta^k}}\right)\right|
\overset{\eqref{eq:I(K)-I(L)<tg}}{\leq} t_I\frac{\l+\l'+k}{2}.\]
Dividing by $k$ and taking the limit $k\to\infty$ yields $\left|\HI(\beta a_i)-\HI(\beta)\right|\leq \frac{t_I}{2}$.


\ref{item:HIhasDefectt_I(n-1)}): Fix a positive integer $k$, and let $\epsilon_{(\alpha\beta)^k}$, $\epsilon_{\alpha^k}$, and $\epsilon_{\beta^k}$ denote $n$-braids of length $\l,\l_\alpha,\l_\beta\leq n-1$, respectively, such that closures of $(\alpha\beta)^k\epsilon_{(\alpha\beta)^k}$, $\alpha^k\epsilon_{\alpha^k}$, and $\beta^k\epsilon_{\beta^k}$ are knots.

We first observe that there exists a cobordism of Euler characteristic $-(n-1)k-\l-\l_\alpha$
between the knot $\widehat{(\alpha\beta)^k\epsilon_{(\alpha\beta)^k}}$ and the link
$\widehat{\alpha^k\epsilon_{\alpha^k}}\# k\widehat{\beta}$, where $k\widehat{\beta}$ denotes the connected sum of $k$ copies of $\widehat{\beta}$ with the summing operation happening along the component of $\widehat{\beta}$ that contains the strand of $\beta$ that ends left-most on the top of $\beta$; see Figure \ref{fig:connectsum}.
\begin{figure}[h]
\centering
\includegraphics[scale=0.6, angle=90]{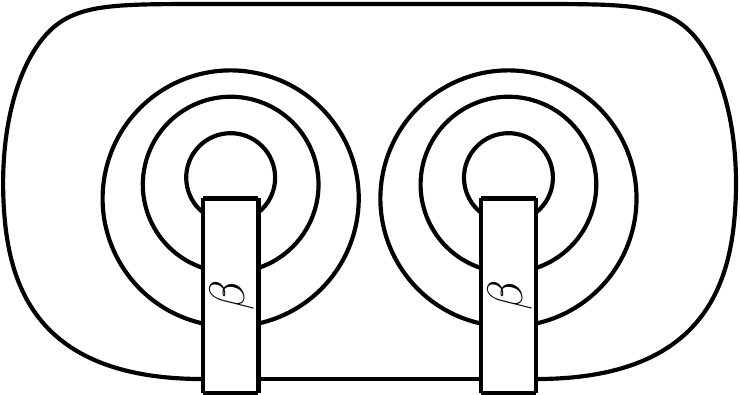}
\caption{An illustration of $2\widehat{\beta}$ for $n=4$.}
\label{fig:connectsum}
\end{figure}
Figure~\ref{fig:cob2} shows how such a cobordism is given by band moves.
\begin{figure}[h]
\centering
\includegraphics[width=12cm]{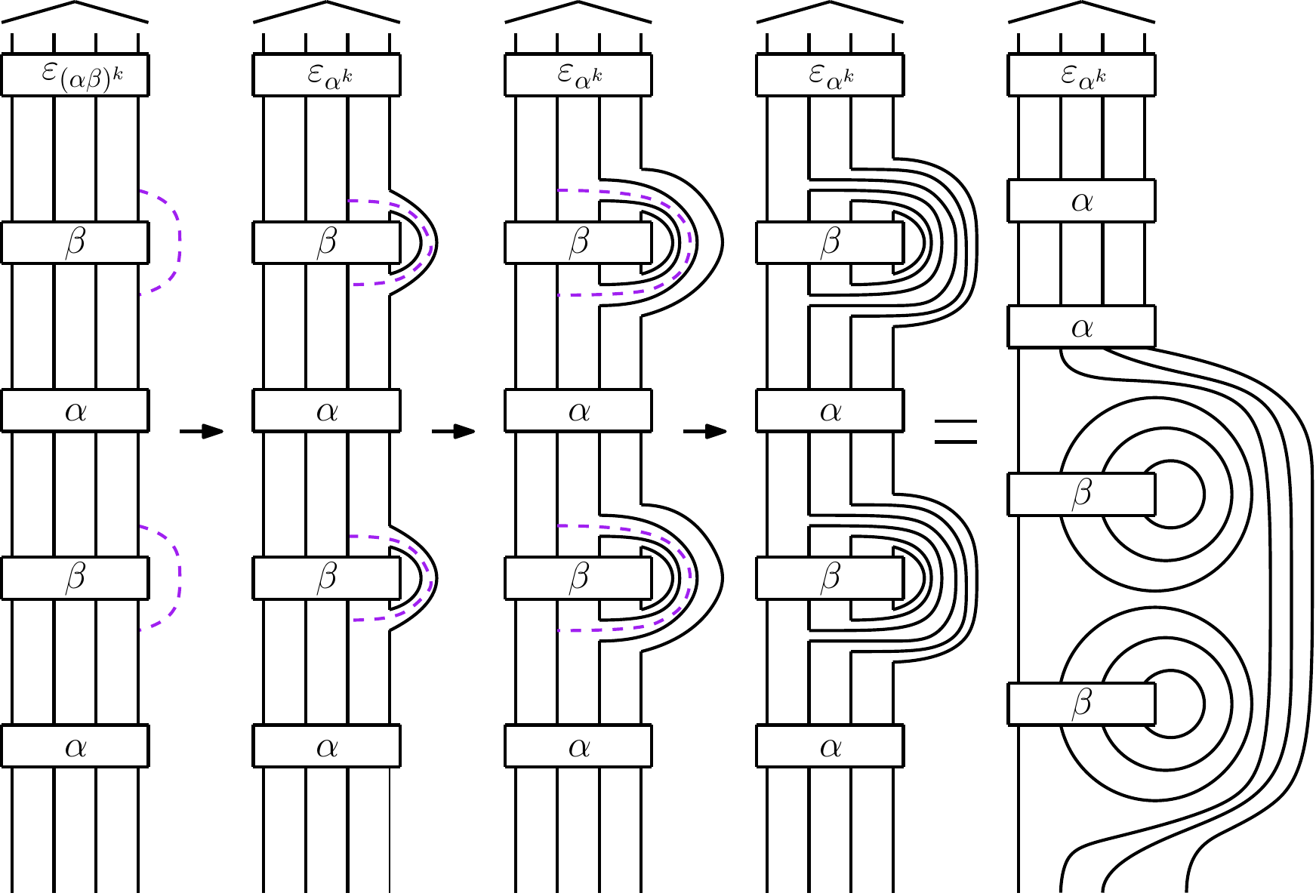}
\caption{Illustration of the
band moves (and crossing resolutions, which are also band moves) that yield a cobordism from the knot $\widehat{(\alpha\beta)^k\epsilon_{(\alpha\beta)^k}}$ to the link
$\widehat{\alpha^k\epsilon_{\alpha^k}}\# k\widehat{\beta}$ for $n=4$, $k=2$. Arrows indicate saddle moves, which for the first arrow includes crossing resolutions and addition (compare with Figure~\ref{fig:cob}). The equality to the right is an isotopy of links. The latter can be seen by recalling that since $\widehat{\alpha^k\epsilon_{\alpha^k}}$ is a knot, the connected sum with several $\widehat{\beta}$ yields isotopic links independent of where on $\widehat{\alpha^k\epsilon_{\alpha^k}}$ the $\widehat{\beta}$ get added.}
\label{fig:cob2}
\end{figure}

By the same argument as in the proof of~\ref{item:HI-I} (see Figure~\ref{fig:cob}), there exists a cobordism of Euler characteristic $-(n-1)(k-1)-\l_\beta$ between $k\widehat{\beta}$ and $\widehat{\beta^k\epsilon_{\beta^k}}$.

These two cobordisms may be concatenated to yield a cobordism of genus
\[\frac{(n-1)(2k-1)+\l+\l_\alpha+\l_\beta}{2}\] between the knots
$\widehat{(\alpha\beta)^k\epsilon_{(\alpha\beta)^k}}$ and $\widehat{\alpha^k\epsilon_{\alpha^k}}\#\widehat{\beta^k\epsilon_{\beta^k}}$.
Therefore, we have
\[\left|I\left(\widehat{(\alpha\beta)^k\epsilon_{(\alpha\beta)^k}}\right)- I\left(\widehat{\alpha^k\epsilon_{\alpha^k}}\#\widehat{\beta^k\epsilon_{\beta^k}}\right)\right|\overset{\eqref{eq:I(K)-I(L)<tg}}{\leq}
t_I\frac{(n-1)k+\l+\l_\alpha+(n-1)k+\l_\beta}{2}.\]
Dividing by $k$ and taking the limit $k\to \infty$ implies $\left|\HI({\alpha\beta})-\HI(\alpha)-\HI(\beta)\right|\leq t_I(n-1)$.

\

In the case when $\alpha$ and $\beta$ commute,
we use the fact that
\[\lim_{k\to\infty}\frac{|\HI(\alpha^k\beta^k)-\HI(\alpha^k)-\HI(\beta^k)|}{k}=0\]
to conclude
\[\HI(\alpha\beta)=\lim_{k\to\infty}\frac{\HI(\alpha^k\beta^k)}{k}=\lim_{k\to\infty}\frac{\HI(\alpha^k)}{k}+\lim_{k\to\infty}\frac{\HI(\beta^k)}{k}
=\HI(\alpha)+\HI(\beta),\]
where $(\alpha\beta)^k=(\alpha)^k(\beta)^k$ was used in the first equality.

\

\ref{item:HIofUnions}):
For any $n_i$-braids $\beta_i$, let $\beta$ denote their disjoint union. In order to give a braid word for $\beta$, we must shift each $\beta_{i}$ by the appropriate number of strands. Indeed, if we let $\beta'_i$ denote the $n$-braid obtained from a braid word for $\beta_i$ by replacing $a_k^{\pm1}$ by $a_{k+\sum_{j<i}n_j}^{\pm1}$, then $\beta=\beta'_1\beta'_2\cdots\beta'_l$. We note that $\beta'_i$ and $\beta'_j$ commute for all $i,j\leq l$. Therefore, $\HI(\beta)=\sum_{i=1}^l \HI({\beta'_i})$ by~\ref{item:HIhasDefectt_I(n-1)}. Thus, we are left with showing
$\HI({\beta'_i})=\HI({\beta_i})$ for all $i\leq l$. We approach this by observing that, while $\beta'_i$ and $\beta_i$ have different numbers of strands, since the braided portions of $\beta'_i$ and $\beta_i$ are identical we may choose convenient $\epsilon$'s to concatenate with ${\beta'_i}^k$ and ${\beta_i}^k$ in the computation of $\HI$ in order to make the closures be isotopic knots.

Fix a positive integer $k$ and let $\epsilon_{\beta_i^k}$ be an $n_i$-braid of length $\l_i$ such that the closure of $\beta_i^k\epsilon_{\beta_i^k}$ is a knot.
Let $\epsilon'_{\beta_i^k}$ be the $n$-braid obtained from a braid word for $\epsilon_{\beta_i^k}$ by replacing $a_k^{\pm1}$ by $a_{k+\sum_{j<i}n_j}^{\pm1}$ and set
\[\epsilon_{{\beta'}_i^k}=a_1a_2\cdots a_{\sum_{j<i}n_j}\epsilon'_{\beta_i^k}.\]
We note that ${\beta'}_i^k\epsilon_{{\beta'}_i^k}$ and ${\beta_i}^k\epsilon_{{\beta_i}^k}$ have isotopic closures and so we conclude
\[\HI(\beta_i)=\lim_{k\to\infty}\frac{I\left(\widehat{\beta_i^k\epsilon_{\beta_i^k}}\right)}{k}
=\lim_{k\to\infty}\frac{I\left(\widehat{{\beta'}_i^k\epsilon_{{\beta'}_i^k}}\right)}{k}=\HI(\beta'_i).\]
\end{proof}

Two famous examples for $I$, Ozsv\'ath and Szab\'o's $\tau$ invariant and Rassmusen's $s$ invariant, turn out to have very simple homogenizations. The following implies this and can be seen as a version of~\cite[Theorem~3.5]{Brandenbursky_11} that depends on the braid index. For coprime positive integers, we denote by $T_{p,q}$ the torus knot given as the closure of the $p$-braid $(a_1a_2\cdots a_{p-1})^q$.

\begin{lemma}\label{lemma:HIwhenSBIissharp}
Fix an integer $n\geq 2.$
If $I(T_{n,nk+1})=t_Ig(T_{n,nk+1})=\frac{(n-1)nk}{2}$
for all positive integers $k$, then
\[\HI(\beta)=t_I\frac{\wr(\beta)}{2}\text{ for all $n$-braids }\beta.\]
\end{lemma}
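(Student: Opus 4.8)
\textbf{Proof plan for Lemma~\ref{lemma:HIwhenSBIissharp}.}
The plan is to leverage the three structural properties of $\HI$ already established---the quasimorphism defect bound \ref{item:HIhasDefectt_I(n-1)}, the single-generator estimate \ref{item:HI(betaa_i)}, and the relation \ref{item:HI-I} between $\HI$ and $I$ on closures of knots---together with the hypothesis pinning down $\HI$ on the full twist, and the fact that $\HI$ is a homogeneous quasimorphism. First I would observe that $\beta \mapsto \HI(\beta) - t_I\frac{\wr(\beta)}{2}$ is itself a homogeneous quasimorphism on $B_n$ (since $\wr$ is a genuine homomorphism), call it $\phi$; it suffices to show $\phi \equiv 0$. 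The key point is that $\phi(\Delta^2) = 0$: indeed, choosing $\varepsilon_{(\Delta^2)^k} = a_1a_2\cdots a_{n-1}$ makes the closure of $(\Delta^2)^k\varepsilon$ equal to $T_{n,nk+1}$, so $\HI(\Delta^2) = \lim_k \frac{I(T_{n,nk+1})}{k} = \frac{(n-1)n}{2}t_I$ by the center property in \ref{item:HIhasDefectt_I(n-1)} and the hypothesis, while $\wr(\Delta^2) = n(n-1)$, so the two terms cancel.

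Next I would exploit that $\phi$ vanishes on a specific positive braid whose closure is a torus knot, and bootstrap from there using that $\phi$ changes by at most $t_I$ when we add or remove a generator (property \ref{item:HI(betaa_i)}, since $\wr$ changes by $\pm 1$). Concretely, for an arbitrary $n$-braid $\beta$ of exponent sum $w = \wr(\beta)$, I would compare $\beta$ with a large power of $\Delta^2$: for a positive integer $N$, $\Delta^{2N}\beta$ differs from $\Delta^{2N}$ by inserting the $\ell(\beta)$ letters of a reduced word for $\beta$, so $|\phi(\Delta^{2N}\beta)| \le \ell(\beta)\cdot t_I/2$ is bounded independent of $N$ (using $\phi(\Delta^{2N}) = 0$). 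On the other hand, for $N$ large the braid $\Delta^{2N}\beta$ is a positive braid (every generator appears, with net positive power in each), hence Dehornoy positive, hence---by the slice-Bennequin sharpness encoded in the hypothesis and property \ref{item:HI-I}, or more directly by Lemma~\ref{lemma:beta>1}-type reasoning---its closure satisfies $I = t_I g_4 = t_I \frac{\wr - (n-1)}{2}$, forcing $\HI(\Delta^{2N}\beta) = t_I\frac{\wr(\Delta^{2N}\beta)}{2}$ and thus $\phi(\Delta^{2N}\beta) = 0$ on the nose. Combining with homogeneity of $\phi$ and $\phi(\Delta^{2N}) = 0$ gives $\phi(\beta) = \phi(\Delta^{2N}\beta) = 0$.

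The cleaner route, which I would actually write up, is: use \ref{item:HI-I} and the hypothesis to show that for any \emph{positive} $n$-braid $\gamma$ (whose closure is a knot) one has $\HI(\gamma) = t_I\frac{\wr(\gamma)}{2}$---this is because a positive braid diagram realizes the slice-Bennequin bound, so $I(\widehat\gamma) = t_I g_4(\widehat\gamma) = t_I\frac{\wr(\gamma)-(n-1)}{2}$, and then \ref{item:HI-I} upgrades this, after passing to powers $\gamma^k$ (still positive) and dividing by $k$, to the exact homogenized identity. Then for general $\beta$, pick $N$ so that $\gamma := \Delta^{2N}\beta$ is positive; by the center/additivity clause of \ref{item:HIhasDefectt_I(n-1)}, $\HI(\gamma) = N\HI(\Delta^2) + \HI(\beta) = N\cdot t_I\frac{n(n-1)}{2} + \HI(\beta)$, and comparing with $\HI(\gamma) = t_I\frac{\wr(\gamma)}{2} = t_I\frac{Nn(n-1) + \wr(\beta)}{2}$ yields $\HI(\beta) = t_I\frac{\wr(\beta)}{2}$. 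The main obstacle is the first bullet: justifying that $\HI(\gamma) = t_I\frac{\wr(\gamma)}{2}$ exactly (not just up to bounded error) for positive braids---this requires knowing that the Seifert-algorithm surface on a positive braid closure is genus-minimizing in $B^4$, i.e.~the slice-Bennequin inequality is sharp there, which for $I = \Upsilon(t)$ is exactly the content of Proposition~\ref{prop:PropOfU} applied to torus knots and general positive braids via cobordism; in the abstract setting one takes the sharpness of $|I(K)| \le t_I g_4(K)$ on positive braid closures as the input furnished by the hypothesis on torus knots combined with the cobordism-monotonicity \eqref{eq:I(K)-I(L)<tg}. Everything else is bookkeeping with the already-proven properties of $\HI$.
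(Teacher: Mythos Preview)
Your cleaner route is correct in outline but takes a genuinely different path from the paper. The paper does not pass through positive braids or Garside normal form at all: it proves directly that for \emph{every} $n$-braid $\alpha$ with knot closure $K$ one has the two-sided slice-Bennequin inequality
\[
t_I\,\frac{\wr(\alpha)-(n-1)}{2}\;\le\;I(K)\;\le\;t_I\,\frac{\wr(\alpha)+(n-1)}{2},
\]
by building a cobordism from $K$ to a torus knot $T_{n,nk+1}$ (delete all negative letters, then add positive letters) and invoking the hypothesis plus \eqref{eq:I(K)-I(L)<tg}. Applying this to $\alpha=\beta^k\varepsilon_{\beta^k}$, dividing by $k$, and letting $k\to\infty$ immediately gives $\HI(\beta)=t_I\frac{\wr(\beta)}{2}$ for \emph{all} $\beta$.

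Your route instead (i) proves sharpness of $I$ on positive-braid closures (same cobordism idea, only adding letters), (ii) homogenizes to get $\HI(\gamma)=t_I\frac{\wr(\gamma)}{2}$ for positive $\gamma$, and then (iii) reduces general $\beta$ to the positive case via $\gamma=\Delta^{2N}\beta$ and centrality of $\Delta^2$. This works, but step (iii) relies on the fact that $\Delta^{2N}\beta$ is a positive braid for $N$ large, and your stated justification---``every generator appears, with net positive power in each''---is not a valid criterion for positivity (nor for Dehornoy-positivity). The claim is true, but it is Garside's theorem that every braid can be written as $\Delta^{-m}p$ with $p$ positive; you should cite or prove this. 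The paper's approach sidesteps this entirely, and in fact your cobordism argument in step (i) already contains everything needed: once you are willing to both add \emph{and} delete generators, you get the two-sided bound for arbitrary $\alpha$, and the detour through positive braids and $\Delta^{2N}$ becomes unnecessary.
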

\begin{proof}
The equality $I(T_{n,nk+1})=t_Ig_4(T_{n,nk+1})$, 
for all positive integers $k$, implies that the slice-Bennequin inequality holds for all $n$-braids; that means,
if the knot $K$ is the closure of an $n$-braid $\alpha$, then
\begin{equation}\label{eq:sB}t_I\frac{\wr(\alpha)-(n-1)}{2}\leq I(K)\leq t_I\frac{\wr(\alpha)+n-1}{2}.\end{equation}
For completeness, we provide a proof of~\eqref{eq:sB} following~\cite[Lemma~4]{rudolph_QPasobstruction}; compare also with the proof of~\cite[Corollary~11]{Livingston_Comp}. We only establish the first inequality of~\eqref{eq:sB} as the second one follows by applying the first to $-\alpha$. Removing all $a_i^{-1}$ in a braid word for $\alpha$, then adding generators $a_i$ allows us to turn $\alpha$ into $(a_1a_2\cdots a_{n-1})^{nk+1}$ for some positive integer $k$, which we additionally chose such that $(n-1)(nk+1)-\wr(\alpha)>0$. Since adding and removing generators yields a cobordism consisting of a $1$-handle between the corresponding closures, this implies that there exists a cobordism of Euler characteristic $-(n-1)(nk+1)+\wr(\alpha)$ between $K$ and $T_{n,nk+1}$.
Thus, we find
\begin{align*}t_I\frac{(n-1)nk}{2}-I(K)&=I(T_{n,nk+1})-I(K)
\\&\overset{\text{\eqref{eq:I(K)-I(L)<tg}}}{\leq} t_I\frac{(n-1)(nk+1)-\wr(\alpha)}{2}\\&=t_I\frac{(n-1)nk}{2}-t_I\frac{\wr(\alpha)-(n-1)}{2},\end{align*} as wanted,
where the assumption on the value of torus knots was used in the first line.

The statement of the lemma follows from~\eqref{eq:sB} by setting $\alpha=\beta^k\varepsilon_k$ and $K=\widehat{\beta^k\varepsilon_k}$, dividing by $k$ and taking the limit $k\to\infty$. 
\end{proof}

The ideas of the proof of Lemma~\ref{lemma:PropofHI} can be used to establish the following.
\begin{prop}\label{prop:HI(alpha)-HI(beta)}
Fix positive integers $n$ and $m$. If an $n$-braid $\beta$ and an $m$-braid $\alpha$ have isotopic links as their closure (or, more generally, concordant links as their closure), then
\[\left|\HI(\beta)-\HI(\alpha)\right|\leq t_I\frac{n-1+m-1}{2}.\]
\end{prop}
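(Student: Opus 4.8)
The plan is to build, for each $k$, a concordance between the connected sum $k\widehat{\beta}$ and $k\widehat{\alpha}$ of genus bounded by roughly $(n-1+m-1)k/2$, then feed this into the same divide-by-$k$ and take-the-limit scheme used throughout Lemma~\ref{lemma:PropofHI}. The crucial building blocks are already in hand: by the argument illustrated in Figure~\ref{fig:cob} (used in the proof of Lemma~\ref{lemma:PropofHI}~\ref{item:HI-I}), for the $n$-braid $\beta$ there is a cobordism of Euler characteristic $-(n-1)(k-1)-\l_\beta$ between $k\widehat{\beta}$ and $\widehat{\beta^k\varepsilon_{\beta^k}}$, where $\l_\beta\le n-1$; likewise a cobordism of Euler characteristic $-(m-1)(k-1)-\l_\alpha$ between $k\widehat{\alpha}$ and $\widehat{\alpha^k\varepsilon_{\alpha^k}}$ with $\l_\alpha\le m-1$. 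Finally, since $\widehat{\beta}$ and $\widehat{\alpha}$ are concordant links (isotopic being a special case), the $k$-fold connected sums $k\widehat{\beta}$ and $k\widehat{\alpha}$ are concordant links as well; taking the connected sum happens along the distinguished components as in Figure~\ref{fig:connectsum}, and concordance of links is preserved under connected sum along specified components.

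First I would concatenate these three cobordisms: from $\widehat{\beta^k\varepsilon_{\beta^k}}$ down to $k\widehat{\beta}$, then the (genus $0$) concordance from $k\widehat{\beta}$ to $k\widehat{\alpha}$, then from $k\widehat{\alpha}$ up to $\widehat{\alpha^k\varepsilon_{\alpha^k}}$. The result is a connected cobordism between the two \emph{knots} $\widehat{\beta^k\varepsilon_{\beta^k}}$ and $\widehat{\alpha^k\varepsilon_{\alpha^k}}$; adding up Euler characteristics, it has Euler characteristic $-(n-1)(k-1)-\l_\beta-(m-1)(k-1)-\l_\alpha$, hence genus
\[
g_k=\frac{(n-1)(k-1)+\l_\beta+(m-1)(k-1)+\l_\alpha}{2}\le \frac{(n-1+m-1)k}{2}.
\]
Applying~\eqref{eq:I(K)-I(L)<tg} gives
\[
\left|I\!\left(\widehat{\beta^k\varepsilon_{\beta^k}}\right)-I\!\left(\widehat{\alpha^k\varepsilon_{\alpha^k}}\right)\right|\le t_I\,g_k\le t_I\,\frac{(n-1+m-1)k}{2}.
\]
Dividing by $k$ and taking $k\to\infty$ yields $\left|\HI(\beta)-\HI(\alpha)\right|\le t_I\frac{n-1+m-1}{2}$, as claimed; the limits exist by the definition of $\HI$.

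The only genuine point to be careful about is the middle step: that a concordance of links $\widehat{\beta}\simeq\widehat{\alpha}$ upgrades to a concordance of $k\widehat{\beta}\simeq k\widehat{\alpha}$, with the connected summing performed along the correct components so that the concatenation of the three pieces really is a cobordism between two knots (not links) and is connected with exactly two boundary components. This is where one invokes that connected sum of links along marked points on chosen components respects concordance --- take the $k$ disjoint copies of the concordance annuli for $\widehat{\beta}\simeq\widehat{\alpha}$ and band them together along the track of the distinguished component, exactly as in Figure~\ref{fig:connectsum}; since $\widehat{\beta^k\varepsilon_{\beta^k}}$ and $\widehat{\alpha^k\varepsilon_{\alpha^k}}$ are already knots, the resulting knot is independent of the placement of the summands, just as noted in the caption of Figure~\ref{fig:cob2}. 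Everything else is bookkeeping with Euler characteristics and the estimate~\eqref{eq:I(K)-I(L)<tg}. For $I=\Upsilon(t)$ and $t_I=t$ this specializes to Proposition~\ref{prop:Upsilon_alpha-Upsilon_beta}.
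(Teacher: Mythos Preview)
Your approach is essentially the same as the paper's: concatenate the two band-move cobordisms from Lemma~\ref{lemma:PropofHI}~\ref{item:HI-I} with a $k$-fold connected sum of the given concordance, bound the genus by $k\frac{n-1+m-1}{2}$, apply~\eqref{eq:I(K)-I(L)<tg}, divide by $k$, and pass to the limit. The only place where the paper is more explicit is the link case you flag at the end: the concordance $C$ between $\widehat{\beta}$ and $\widehat{\alpha}$ induces a bijection between their components, but there is no reason the component containing the leftmost strand of $\beta$ should correspond under this bijection to the component containing the leftmost strand of $\alpha$; the paper fixes this by first conjugating $\alpha$ (which does not change $\HI(\alpha)$, since $\HI$ is a conjugacy invariant) so that the two distinguished components do match up, after which your argument goes through verbatim.
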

We use Proposition~\ref{prop:HI(alpha)-HI(beta)} crucially in the proof of Theorem~\ref{thmintro:FDTCofNonminimalBraidIsBounded}. In Example~\ref{Ex:optofPropofHI}, we comment on the optimality of Proposition~\ref{prop:HI(alpha)-HI(beta)}.

\begin{proof}[Proof of Proposition~\ref{prop:HI(alpha)-HI(beta)}]
We will first prove the statement in the case that the closure of one (and thus both) of $\alpha$ and $\beta$ are knots.

Fix a positive integer $k$ and let $\varepsilon_{\alpha^k}$ and $\varepsilon_{\beta^k}$ be braids given by braid words of length $\l\leq n-1$ and $\l'\leq m-1$ such that $\beta^k\varepsilon_{\beta^k}$ and $\alpha^k\varepsilon_{\alpha^k}$ are braids with closures that are knots. We claim that there exists a cobordism of genus $\frac{(n-1+m-1)(k-1)+\l+\l'}{2}$ between the closures of $\beta^k\varepsilon_{\beta^k}$ and $\alpha^k\varepsilon_{\alpha^k}$.
To see this, let $k\widehat{\beta}$ denote the knot obtained as the connect sum of $k$ copies the knot $\widehat{\beta}$.
By the argument given in the proof of~\ref{item:HI-I} of Lemma~\ref{lemma:PropofHI}, there exists a cobordism $C$ from the closure of $\beta^k\varepsilon_{\beta^k}$ to $k\widehat{\beta}$ given by $(k-1)(n-1)+\l$ band moves; see Figure~\ref{fig:cob}.
Similarly, there is a cobordism $D$ from $k\widehat{\alpha}$---the connect sum of $k$ times the closure of $\alpha$---to the closure of $\alpha^k\varepsilon_{\alpha^k}$ given by $(k-1)(m-1)+\l'$ band moves. Note that  knots $k\widehat{\beta}$ and $k\widehat{\alpha}$ are concordant, say by a concordance $A$, since the knots $\widehat{\beta}$ and $\widehat{\alpha}$ are concordant by assumption.
The concatenation of the cobordisms $C$, $A$, and $D$ yield a cobordism between the closure of $\beta^k\varepsilon_{\beta^k}$ and $\alpha^k\varepsilon_{\alpha^k}$ with genus $\frac{(k-1)(n-1)+\l+(k-1)(m-1)+\l'}{2}\leq k\frac{n-1+m-1}{2}$.

The statement follows from the existence of the above cobordism by the following calculation:
\begin{align*}\left|\HI(\beta)-\HI(\alpha)\right|
&=\left|\lim_{k\to\infty}\frac{I\left({\widehat{\beta^k\varepsilon_{\beta^k}}}\right)}{k}
-\lim_{k\to\infty}\frac{I\left({\widehat{\alpha^k\varepsilon_{\alpha^k}}}\right)}{k}\right|\\
&=\left|\lim_{k\to\infty}\frac{I\left(\widehat{\beta^k\varepsilon_{\beta^k}}\right)
-I\left(\widehat{\alpha^k\varepsilon_{\alpha^k}}\right)}{k}\right|\\
&\overset{\eqref{eq:I(K)-I(L)<tg}}{\leq}\lim_{k\to\infty}\frac{t_Ik\frac{n-1+m-1}{2}}{k}=t_I\frac{n-1+m-1}{2}.
\end{align*}

\

It remains to discuss the case when the closure of $\beta$ (and thus $\alpha$) is a link with several components.
The argument remains the same as above, but it only works for a particular choice of links $k\widehat{\beta}$ and $k\widehat{\alpha}$, which we elaborate below.

Let $C$ be a concordance (a union of annuli smoothly embedded in $S^3\times[0,1]$) between $\widehat{\beta}$ and $\widehat{\alpha}$. The concordance $C$ induces a bijection between the connected components of the links $\widehat{\beta}$ and $\widehat{\alpha}$: connected components of the links are related if they are contained in the same subannulus of $C$. We pick $i\leq m-1$ such that under this bijection the connected component of $\widehat{\beta}$ that contains the strand that ends left-most on the top of $\beta$ gets map to the connected component of $\widehat{\alpha}$ that contains the strand of $\alpha$ that ends $i$th on the top of $\alpha$. For example, let $\beta$ be the $3$-braid $a_2^3$ and $\alpha$ be the $3$-braid $a_1^3$. The closure of both of these are an unknot disjoint union a $T_{2,3}$ (i.e.~a trefoil). A concordance has to relate the two unknots, so in this case $i=3$.
We may conjugate $\alpha$ by a braid $\gamma$ such that the strand of $\gamma\alpha\gamma^{-1}$ that ends left-most on the top contains the strand of $\alpha$ that ends $i$th on the top of $\alpha$. For the above $3$-braid example, where $\beta=a_2^3$ and $\alpha=a_1^3$, we could choose $\gamma=a_1a_2$.

We now prove the statement for the braids $\beta$ and $\alpha$, where, without loss of generality (by the previous paragraph and the fact that conjugated braids have the same closure), we may and do assume the following: the bijection induced by the concordance $C$ relates the connected component of $\widehat{\beta}$ that contains the strand that ends left-most on the top of $\beta$ to the
connected component of $\widehat{\alpha}$ that contains the strands that ends left-most on the top of $\alpha$.

As in the proof of Lemma~\ref{lemma:PropofHI}.\ref{item:HIhasDefectt_I(n-1)}, we choose $k\widehat{\beta}$ to be the connected sum of $k$ times the link $\widehat{\beta}$, where the connected sum is done along the connected components of $\widehat{\beta}$ that contains the strand that ends left-most at the top of $\beta$; see~Figure~\ref{fig:connectsum}. Similarly, we set $k\widehat{\alpha}$ to be the connected sum of $k$ times the link $\widehat{\alpha}$, where the connected sum is done along the connected components of $\widehat{\alpha}$ that contains the strand that ends left-most on top of $\alpha$. The assumption made in the last paragraph guarantees that the links $k\widehat{\beta}$ are concordant to $k\widehat{\alpha}$: indeed, take $A$ to be the concordance (a cobordism that is a union of $k$ annuli) that is given by the concordance $C$ on the $k$ summands of $k\widehat{\beta}$ and $k\widehat{\alpha}$. With this set-up, we conclude the proof as in the case where $\widehat{\beta}$ and $\widehat{\alpha}$ are knots.
\end{proof}

\subsection*{Optimality of the inequalities in Lemma~\ref{lemma:PropofHI} and Proposition~\ref{prop:HI(alpha)-HI(beta)}}
We provide examples that show that, in general, the inequalities in Lemma~\ref{lemma:PropofHI}.\ref{item:HI-I}$\&$\ref{item:HI(betaa_i)} and Proposition~\ref{prop:HI(alpha)-HI(beta)} cannot be improved.
\begin{Example}\label{Ex:optofPropofHI}
Let $I$ be a concordance homomorphism such that $t_I=1$ and $I$ satisfies the assumption of Lemma~\ref{lemma:HIwhenSBIissharp} for all $n\geq 2$; for example, take $I$ to be Ozsv\'ath and Szab\'o's $\tau$ invariant.

Fix a positive integer $n\geq 2$.
The $n$-braid $\beta=a_1a_2\cdots a_{n-1}$ has $\HI(\beta)=\frac{\wr(\beta)}{2}=\frac{n-1}{2}$, while its closure $K$ has $I(K)=0$ since it is the unknot. Thus, we have equality
\[|\HI(\beta)-I(K)|=\frac{n-1}{2}= t_I\frac{n-1}{2}\] in Lemma~\ref{lemma:PropofHI}.\ref{item:HI-I}.
Taking $\beta$ to be the trivial $n$-braid yields equality in Lemma~\ref{lemma:PropofHI}.\ref{item:HI(betaa_i)}.

For positive integers $n$ and $m$, we set
\[\beta=a_1a_2\cdots a_{n-1}\et\alpha=(a_1a_2\cdots a_{m-1})^{-1}.\]
We have
\[\HI(\beta)=\frac{\wr(\beta)}{2}=\frac{n-1}{2}\et \HI(\alpha)=\frac{\wr(\alpha)}{2}=\frac{-m+1}{2},\]
which yields that $\beta$ and $\alpha$ are braids with isotopic closure such that the inequality in Proposition~\ref{prop:HI(alpha)-HI(beta)} is an equality:
\[\left|\HI(\beta)-\HI(\alpha)\right|=\frac{n-1+m-1}{2}=t_I\frac{n-1+m-1}{2}.\]
\end{Example}

\subsection*{Proofs of Claim~\ref{Claim:g4K-L} and Claim~\ref{Claim:L=diffoftorusknots}}

We conclude this paper by providing the proofs for Claim~\ref{Claim:g4K-L} and Claim~\ref{Claim:L=diffoftorusknots}.

\begin{proof}[Proof of Claim~\ref{Claim:g4K-L}]
  The knot $K$ is the closure of
  \[\beta=\alpha_1\delta\beta_1\delta^\Delta\alpha_2\delta\beta_2\cdots\alpha_{n-1}\delta\beta_{n-1}\delta^\Delta\alpha_{n}\delta\beta_{n},\]
      where the $\alpha_j$ and $\beta_j$ are (possibly trivial) quasipositive $n$-braids.
      We note that the $n$-braid $\beta$ can be changed into the $n$-braid $\beta_{n,n}=(\delta\delta^\Delta)^{n-1}\delta$ by removing
      \[\wr(\beta)-\wr(\beta_{n,n})=\sum_{j=1}^{n} \wr{\alpha_j}+\sum_{j=1}^{n} \wr{\beta_j}\]
      positive generators $a_i$ in a braid word for $\beta$. For this, we recall that the $\alpha_j$ and $\beta_j$ are given by braid words that are products of the form $\omega a_i\omega^{-1}$ and so removing the middle $a_i$ in each such conjugate yields a braid word for $\beta_{n,n}$.
      Therefore (as in the proof of Lemma~\ref{lemma:PropofHI}), there is a cobordism between $K$ and $L=\widehat{\beta_{n,n}}$ given by
      \[\wr(\beta)-\wr(\beta_{n,n})=2g+(n-1)-((2n-1)(n-1))=2(g-(n-1)^2)\] many $1$-handles. In particular, this cobordism is connected and of genus $\frac{2(g-(n-1)^2)}{2}$, as wanted in Claim~\ref{Claim:g4K-L}.
\end{proof}

\begin{proof}[Proof of Claim~\ref{Claim:L=diffoftorusknots}]
We first observe that
\[\beta_{n,n}=(\Delta^2)^{n-1}(a_2\cdots a_{n-1})^{-(n-1)(n-1)}(a_1a_2\cdots a_{n-1}).\] Thus, $\beta_{n,n}$ is conjugate to the $n$-braid \begin{align*}
                            \beta'_{{n,n}}&=(a_1a_2\cdots a_{n-1})(\Delta^2)^{n-1}(a_2\cdots a_{n-1})^{-(n-1)(n-1)}\\&=(a_1a_2\cdots a_{n-1})^{(n-1)n+1}(a_2\cdots a_{n-1})^{-(n-1)(n-1)};\end{align*}
in particular,
$L=\widehat{\beta_{{n,n}}}=\widehat{\beta'_{{n,n}}}$.
By adding $n-2$ generators we can turn $\beta'_{{n,n}}$ into
$\beta''_{{n,n}}=(a_1a_2\cdots a_{n-1})^{(n-1)n+1}(a_2\cdots a_{n-1})^{-(n-1)(n-1)-1}.$
Consequently, there exists a cobordism $C$ between $L$ and $\widehat{\beta''_{{n,n}}}$ of Euler characteristic $-n+2$ given by $n-2$ many $1$-handles between $L$ and $\widehat{\beta''_{{n,n}}}$.
Also, $n-2$ many band moves turn the closure of $\beta''_{{n,n}}$ into the connect sum of \[T_{n,(n-1)n+1}=\widehat{(a_1a_2\cdots a_{n-1})^{(n-1)n+1}}\et -T_{n-1,(n-1)(n-1)+1}.\] This can be seen by an argument similar to the proof of Lemma~\ref{lemma:PropofHI}.\ref{item:HIhasDefectt_I(n-1)}; compare to Figure~\ref{fig:cob2}.
This gives rise to a cobordism $D$ of Euler characteristic $-n+2$ between \[\widehat{\beta''}\et T_{n,(n-1)n+1}\#(-T_{n-1,(n-1)(n-1)+1}).\]
Concatenating $C$ and $D$ yields a cobordism of genus $n-2$ between $L$ and \newline $T_{n,(n-1)n+1}\#(-T_{n-1,(n-1)(n-1)+1})$, as wanted.
\end{proof}

\bibliographystyle{alpha}
\bibliography{peterbib}
\end{document}